\documentclass[10pt,english]{smfart}

\usepackage[T1]{fontenc}
\usepackage[english,french]{babel}
\usepackage{latexsym,amscd,color}
\usepackage{amsmath,amsfonts,amssymb,mathrsfs}
\usepackage{enumerate,euscript}
\usepackage{amssymb,url,xspace,smfthm}
\input xy
\xyoption{all}

\DeclareMathOperator{\gal}{Gal}

\newcommand{\BibTeX}{{\scshape Bib}\kern-.08em\TeX}

\newcommand{\T}{\S\kern .15em\relax }
\newcommand{\AMS}{$\mathcal{A}$\kern-.1667em\lower.5ex\hbox
        {$\mathcal{M}$}\kern-.125em$\mathcal{S}$}

\DeclareMathOperator{\im}{Im}

\DeclareMathOperator{\spm}{Spm}

\DeclareMathOperator{\rg}{rk}

\DeclareMathOperator{\spec}{Spec}

\newcommand{\wmu}{\widehat{\mu}}
\newcommand{\C}{\mathbb{C}}
\newcommand{\Q}{\mathbb{Q}}

\newcommand{\adeg}{\widehat{\deg}}

\newcommand{\p}{\mathfrak{p}}
\DeclareMathOperator{\sym}{Sym}

\newcommand{\E}{\overline{E}}
\newcommand{\F}{\overline{F}}
\newcommand{\sE}{\mathcal{E}}
\newcommand{\G}{\overline{G}}

\renewcommand{\O}{\mathcal{O}}

\newcommand{\ndot}{\raisebox{.4ex}{.}}




\tolerance 400
\pretolerance 200

\title{Explicit Uniform estimate of arithmetic Hilbert-Samuel function of hypersurfaces}
\alttitle{Estimation uniforme explicite de la fonction arithm\'etiques de Hilbert-Samuel des hypersurfaces}

\date{\today}
\author{Chunhui Liu}
\address{Department of Mathematics\\
Faculty of Science\\Kyoto University\\
606-8502 Kyoto\\Japan}
\email{chunhui.liu@math.kyoto-u.ac.jp}

\begin{document}
\def\smfbyname{}
\begin{abstract}
In this paper, we will give an upper bound and a lower bound of the arithmetic Hilbert-Samuel function of projective hypersurfaces, which are uniform and explicit. These two bounds have the optimal dominant terms. As an application, we use the lower bound to get an estimate of the density of rational points with small heights in a hypersurface.
\end{abstract}
\begin{altabstract}
Dans cet article, on donnera une majoration et une minoration de la fonction arithm\'etique de Hilbert-Samuel des hypersurfaces, qui sont uniformes et explicites. La majoration et la minoration admettent les termes principals optimaux. Comme une application, we obtient une estimation de la densit\'e des points rationnels de hauteur petite d'une hypersurface par cette minoration.
\end{altabstract}

\maketitle

\tableofcontents

\section{Introduction}
In this paper, we focus on an estimate of arithmetic Hilbert-Samuel function of arithmetic schemes. More precisely, we will give an upper bound and a lower bound of the arithmetic Hilbert-Samuel function of a hypersurface, which are both explicit and uniform.

\subsection{History}
Let $X$ be a closed sub-scheme of $\mathbb P^n_k$ over the field $k$ of dimension $d$, and $L$ be a very ample line bundle over $X$. We have (cf. \cite[Corollary 1.1.25, Theorem 1.2.6]{LazarsfeldI})
\[\dim_k\left(H^0(X,L^{\otimes D})\right)=\frac{\deg\left(c_1(L)^d\right)}{d!}D^d+o(D^{d})\]
for $D\in\mathbb N^+$. We call $\dim_k\left(H^0(X,L^{\otimes D})\right)$ the (geometric) Hilbert-Samuel function of $X$ with respect to $L$ of the variable $D\in\mathbb N^+$.

It is one of the central subjects in Arakelov geometry to find an arithmetic analogue of the Hilbert-Samuel function defined above. Let $K$ be a number field, $\O_K$ be its ring of integers, $M_{K,f}$ be the set of its finite places, and $M_{K,\infty}$ be the set of infinite places. We suppose that $\pi:\mathscr X\rightarrow \spec\O_K$ is a arithmetic variety of Krull dimension $d+1$, which means that $\mathscr X$ is integral and the morphism $\pi$ is flat and projective. Let $\overline{\mathscr L}=\left(\mathscr L,(\|\ndot\|_v)_{v\in M_{K,\infty}}\right)$ be a (semi-positive or positive) normed very ample line bundle over $\mathscr X$ (on the generic fiber). Let
\[\widehat{h}^0(\mathscr X,\overline{\mathscr L}^{\otimes D})=\log\#\left\{s\in H^0(\mathscr X,\mathscr L^{\otimes D})|\;\|s\|_v\leqslant1,\forall v\in M_{K,\infty}\right\}.\]
Or equivalently, we consider $H^0(\mathscr X,\mathscr L^{\otimes D})$ as a normed vector bundle equipped with some induced norms over $\spec\O_K$, and we consider the its (normalized) Arakelov degree $\adeg_n\left(\overline{H^0(\mathscr X,\mathscr L^{\otimes D})}\right)$ or its slope $\wmu\left(\overline{H^0(\mathscr X,\mathscr L^{\otimes D})}\right)$. Usually we call the above functions of $D\in\mathbb N^+$ the arithmetic Hilbert-Samuel function of $\left(\mathscr X,\overline{\mathscr L}\right)$.

We expect that we have
\[\widehat{h}^0(\mathscr X,\overline{\mathscr L}^{\otimes D})=\frac{\adeg\left(\widehat{c}_1(\overline{\mathscr L})^{d+1}\right)}{(d+1)!}D^{d+1}+o(D^{d+1})\]
for $D\in\mathbb N^+$, and for different cases, we have some results on this topic.

 In \cite{Gillet-Soule}, H. Gillet and C. Soul\'e proved such an asymptotic formula (\cite[Theorem 8]{Gillet-Soule}) with respect to a Hermitian line bundle as a consequence of the arithmetic Riemann-Roch theorem (\cite[Theorem 7]{Gillet-Soule}), where they suppose $\mathscr X$ has a regular generic fiber. In \cite[Th\'eor\`eme principal]{Abbes-Bouche}, A. Abbes and T. Bouche proved the same result without the arithmetic Riemann-Roch theorem supposing the same condition on the generic fiber. In \cite[Theorem 1.4]{Zhang95}, S. Zhang proved this result without the condition of smooth generic fiber by some technique of asymptotic analysis. In \cite[Th\'eor\`eme A]{Randriam06}, H. Randriambololona generalized this result to the case of coherent sheaf from a sub-quotient of the normed vector bundle.

In \cite{Philippon_Sombra_2008}, P. Philippon et M. Sombra proposed another definition of the arithmetic Hilbert-Samuel function, and they proved an asymptotic formula for the case of toric varieties (see \cite[Th\'eor\`eme 0.1]{Philippon_Sombra_2008}). In \cite{Hajli_2015}, M. Hajli proved the same asymptotic formula for the case of general projective varieties with the definition in \cite{Philippon_Sombra_2008}.

It is also an important topic to consider the uniform bounds of the arithmetic Hilbert-Samuel function of arithmetic varieties, with a possibly minor modification of the definition. In \cite{David_Philippon99}, S. David and P. Philippon give an explicit uniform lower bound of the the arithmetic Hilbert-Samuel function. This result is reformulated by H. Chen in \cite[Theorem 4.8]{Chen1} for a study of counting rational points. In fact, let $\mathscr X\rightarrow\spec\O_K$ be an arithmetic varieties, and $\overline{\mathscr L}$ be a very ample Hermitian line bundle over $\mathscr X$ which determines a polarization in $\mathbb P^n_{\O_K}$ such that $\deg\left(\mathscr X\times_{\spec\O_K}\spec K\right)=\delta$ as a closed sub-scheme of $\mathbb P^n_K$. We denote by $\wmu(\F_D)$ the arithmetic Hilbert-Samuel function the above $\mathscr X$ (see Definition \ref{arithmetic hilbert function} for the precise definition) of variable $D\in\mathbb N^+$. In the above literatures, the inequality
\[\frac{\wmu(\F_D)}{D}\geqslant\frac{d!}{\delta(2d+2)^{d+1}}h_{\overline{\mathscr L}}(\mathscr X)-\log(n+1)-2^d\]
is uniformly verified for any $D\geqslant2(n-d)(\delta-1)+d+2$ (see also \cite[Remark 4.9]{Chen1} for some minor modification), where $h_{\overline{\mathscr L}}(\mathscr X)$ is the height of $\mathscr X$ defined by the arithmetic intersection theory (cf. \cite[Definition 2.5]{Faltings91}). But this estimate is far from optimal. Even the coefficient of $h_{\overline{\mathscr L}}(\mathscr X)$ is not optimal compared with that in the asymptotic formula.
\subsection{Principle result}
In this paper, we will give an upper bound and a lower bound of the arithmetic Hilbert-Samuel function of a hypersurface. In fact, we will prove the following result (in Theorem \ref{upper and lower bound of arithmetic Hilbert}).
\begin{theo}\label{upper and lower bound of arithmetic Hilbert-intro}
  Let $\O(1)$ be the universal bundle of $\mathbb P^n_K$ equipped with $\ell^2$-nomrs (see \eqref{l^2-norm} for the definition). Let $X$ be a hypersurface of degree $\delta$ in $\mathbb P^n_K$. We denote by $\wmu(\F_D)$ (see Definition \ref{arithmetic hilbert function} for the precise definition of the Hermitian vector bundle $\F_D$ over $\spec\O_K$) the arithmetic Hilbert-Samuel function of variable $D$, which is defined with respect to $\O(1)$ and the above closed immersion. Then the inequalities
\[\frac{\wmu(\F_D)}{D}\geqslant\frac{h(X)}{n\delta}+C_1(n,\delta)\]
and
\[\frac{\wmu(\F_D)}{D}\leqslant\frac{h(X)}{n\delta}+C_2(n,\delta)\]
are uniformly verified for all $D\geqslant\delta$, where $h(X)$ is a logarithmic height of $X$ (Definition \ref{classical height of hypersurface} or Definition \ref{definition of wmu(I_X)}), and the constants $C_1(n,\delta)$ and $C_2(n,\delta)$ will be given explicitly in Theorem \ref{upper and lower bound of arithmetic Hilbert}.
\end{theo}
Since we can explicitly compare the possible involved heights of $X$ (see \cite[\S3,\S4]{BGS94}, \cite[\S B.7]{Hindry}, \cite[Proposition 3.6]{Chen1} and Proposition \ref{height na\"ive-slope} of this paper), it is not very serious to choose what kind of heights of $X$ in the statement of Theorem \ref{upper and lower bound of arithmetic Hilbert-intro} if we do not care the constants $C_1(n,\delta)$ and $C_2(n,\delta)$ above. In Theorem \ref{upper and lower bound of arithmetic Hilbert-intro}, the coefficients of $h(X)$ are optimal compared with that in the asymptotic formula. In fact, we consider a special case in this paper, but we get a better estimate than that in \cite{David_Philippon99} and \cite[Theorem 4.8]{Chen1}.
\subsection{Motivation and an application}
The lower bound of the arithmetic Hilbert-Samuel function can be applied in the problem of counting rational points by the determinant method, see \cite[Theorem 3.1]{Chen2} for example. By \cite[Proposition 2.12]{Chen1}, if the heights of rational points can be bounded by a term depending on the arithmetic Hilbert-Samuel function, then these rational points can be covered by a hypersurface which does not contain the generic point of the original variety. In order to apply this proposition, a good uniform lower bound of the arithmetic Hilbert-Samuel function plays an important role. In Theorem \ref{covered by one hypersurface} (see also Remark \ref{explicit of covered by one hypersurface}), we will prove the following result.
\begin{theo}\label{covered by one hypersurface-intro}
  Let $K$ be a number field, $X$ be an integral hypersurface in $\mathbb P^n_K$ of degree $\delta$, and $H_K(X)$ be the height of $X$ (see Definition \ref{classical height of hypersurface} for the precise definition). We suppose that $S(X;B)$ is the set of rational points of $X$ whose heights (see Definition \ref{weil height} and Definition \ref{arakelov height}) are smaller than or equal to $B$. If
  \[H_K(X)\gg_{n,K,\delta}B^{n\delta},\]
  then $S(X;B)$ can be covered by one hypersurface of degree smaller than or equal to $\delta$ which does not contain the generic point of $X$.
\end{theo}
The implicit constant depending on $n$ and $K$ in Theorem \ref{covered by one hypersurface-intro} will be explicitly written down in Theorem \ref{covered by one hypersurface}.

These kinds of estimates are useful in the determinant method (see \cite{Heath-Brown,Salberger07,Salberger_preprint2013} for the classical version and \cite{Chen1,Chen2} for the approach of Arakelov geometry). In \cite[Theorem 4]{Heath-Brown}, \cite[Lemma 3]{Browning_Heath05}, \cite[Lemma 6.3]{Salberger07}, and \cite[Lemma 1.7]{Salberger_preprint2013}, the exponent of $B$ in Theorem \ref{covered by one hypersurface-intro} is of $O_{n}(\delta^3)$. In the remark under the statement of \cite[Theorem 1.3]{Walsh_2015}, the same exponent of $B$ as in Theorem \ref{covered by one hypersurface-intro} is obtained, but the degree of the auxiliary hypersurface can only be given as $O_{n,\delta}(1)$. All the arguments mentioned above work on $K=\Q$ and their constants are implicit.
\subsection{Structure of article}
This paper is organized as following: in \S 2, we provide the basic setting of the whole problem. In \S 3, first we state an estimate of arithmetic Hilbert-Samuel function of projective spaces, and we get the estimates of that of hypersurfaces by comparing some norms. In \S 4, by applying the slope inequalities over an evaluation map, we prove that the rational points of small heights in a hypersurface can be covered by a bounded degree hypersurface which does not contain its generic points.

In the appendix, we will give a uniform explicit estimate of the arithmetic Hilbert-Samuel function of projective spaces with respect to the symmetric norm, which is finer than that in \cite[Annexe]{Gaudron08}.
\subsection*{Acknowledgement}
This paper is a part of the author's Ph. D. thesis prepared at Universit\'e Paris Diderot - Paris 7. The author would like to thank one of his advisors Huayi Chen for his suggestions and kind-hearted help on this work. In addition, the author would like to thank Yeping Zhang for his aid in some calculation in the appendix. Chunhui Liu is supported by JSPS KAKENHI Grant Number JP17F17730.
\section{Notations and Preliminaries}
In this section, we will introduce some useful notations and definitions. Let $K$ be a number field, and $\O_K$ be its ring of integers. We denote by $M_{K,f}$ the set of finite places of $K$, and by $M_{K,\infty}$ the set of infinite places of $K$. In addition, we denote by $M_K=M_{K,f}\sqcup M_{K,\infty}$ the set of places of $K$. For every $v\in M_K$, we define the absolute value $|x|_v=\left|N_{K_v/\Q_v}(x)\right|_v^\frac{1}{[K_v:\Q_v]}$ for each $v\in M_K$, extending the usual absolute values on $\Q_p$ or $\mathbb{R}$.
\subsection{Hermitian vector bundles}
  A \textit{Hermitian vector bundle} over $\spec\O_K$ is all the pairings $\E=\left(E,\left(\|\ndot\|_v\right)_{v\in M_{K,\infty}}\right)$, where:
  \begin{itemize}
    \item $E$ is a projective $\O_K$-module of finite rang;
    \item $\left(\|\ndot\|_v\right)_{v\in M_{K,\infty}}$ is a family of Hermitian norms, where $\|\ndot\|_v$ is a norm over $E\otimes_{\O_K,v}\C$ which is invariant under the action of $\gal(\C/K_v)$.
  \end{itemize}

If $\rg_{\O_K}(E)=1$, we say that $\E$ is a \textit{Hermitian line bundle}.

We suppose that $F$ is a sub-$\O_K$-module of $E$. We say that $F$ is a \textit{saturated} sub-$\O_K$-module if $E/F$ is a torsion-free $\O_K$-module.

Let $\E=\left(E,\left(\|\ndot\|_{E,v}\right)_{v\in M_{K,\infty}}\right)$ and $\F=\left(F,\left(\|\ndot\|_{F,v}\right)_{v\in M_{K,\infty}}\right)$ be two Hermitian vector bundles. If $F$ is a saturated sub-$\O_K$-module of $E$ and $\|\ndot\|_{F,v}$ is the restriction of $\|\ndot\|_{E,v}$ over $F\otimes_{\O_K,v}\C$ for every $v\in M_{K,\infty}$, we say that $\F$ is a \textit{sub-Hermitian vector bundle} of $\E$ over $\spec\O_K$.

We say that $\G=\left(G,\left(\|\ndot\|_{G,v}\right)_{v\in M_{K,\infty}}\right)$ is a \textit{quotient Hermitian vector bundle} of $\E$ over $\spec\O_K$, if $G$ is a projective quotient $\O_K$-module of $E$ and $\|\ndot\|_{G,v}$ is the induced quotient space norm of $\|\ndot\|_{E,v}$ for every $v\in M_{K,\infty}$.

With the above definitions. If
\[\begin{CD}
  0@>>>F@>>>E@>>>G@>>>0
\end{CD}\]
is a exact sequence of $\O_K$-modules, we say that
\[\begin{CD}
  0@>>>\F@>>>\E@>>>\G@>>>0
\end{CD}\]
is a exact sequence of Hermitian vector bundles over $\spec\O_K$, and we denote $\G=\E/\F$.

For simplicity, we denote by $E_K=E\otimes_{\O_K}K$ in this paper below.

\subsection{Arakelov degree and slope}
Let $\E$ be a Hermitian vector bundle over $\spec\O_K$, and $\{s_1,\ldots,s_r\}$ be a $K$-basis of $E\otimes_{\O_K}K$. The \textit{Arakelov degree} of $\E$ is defined as
\begin{eqnarray*}
  \adeg(\E)&=&-\sum_{v\in M_{K}}[K_v:\Q_v]\log\left\|s_1\wedge\cdots\wedge s_r\right\|_v\\
  &=&\log\left(\#\left(E/\O_Ks_1+\cdots+\O_Ks_r\right)\right)-\frac{1}{2}\sum_{v\in M_{K,\infty}}\log\det\left(\langle s_i,s_j\rangle_{1\leqslant i,j\leqslant r}\right),
\end{eqnarray*}
where $\left\|s_1\wedge\cdots\wedge s_r\right\|_v$ follows the definition in \cite[2.1.9]{Chen10b} for all $v\in M_{K,\infty}$, and $\langle s_i,s_j\rangle_{1\leqslant i,j\leqslant r}$ is the Gram matrix of the basis $\{s_1,\ldots,s_r\}$. We refer the readers to \cite[2.4.1]{Gillet-Soule91} for a proof of the equivalence of the above two definitions. The Arakelov degree is independent of the choice of the basis $\{s_1,\ldots,s_r\}$ by the product formula (cf. \cite[Chap. III, Proposition 1.3]{Neukirch}). In addition, we define
\[\adeg_n(\E)=\frac{1}{[K:\Q]}\adeg(\E)\]
as the \textit{normalized Arakelov degree} of $\E$, which is independent of the choice of the base field $K$.

Let $\E$ be a non-zero Hermitian vector bundle over $\spec\O_K$, and $\rg(E)$ be the rank of $E$. The \textit{slope} of $\E$ is defined as
\[\wmu(\E):=\frac{1}{\rg(E)}\adeg_n(\E).\]
In addition, we denote by $\wmu_{\max}(\E)$ the maximal value of slopes of all non-zero Hermitian sub-bundles, and by $\wmu_{\min}(\E)$ the minimal value of slopes of all non-zero Hermitian quotients bundles of $\E$.

Let
\[\begin{CD}
  0@>>>\F@>>>\E@>>>\G@>>>0
\end{CD}\]
be an exact sequence of Hermitian vector bundles. In this case, we have
\begin{equation}\label{Po}
  \adeg(\E)=\adeg(\F)+\adeg(\G).
\end{equation}
We refer the readers to \cite[(3.3)]{Bost_Kunnemann} for a proof of the equality \eqref{Po}.
\subsection{Definition of arithmetic Hilbert-Samuel function}\label{basic setting}
Let $\overline{\mathcal E}$ be a Hermitian vector bundle of rank $n+1$ on $\spec\O_K$, and $\mathbb P(\sE)$ be the projective space which represents the functor from the category of commutative $\O_K$-algebras to the category of sets mapping all $\O_K$-algebra $A$ to the set of rank $1$ projective quotient $A$-module of $\sE\otimes_{\O_K}A$. We denote by $\pi:\mathbb P(\sE)\rightarrow\spec\O_K$ the structural morphism. Let $\O_{\mathbb P (\sE)}(1)$ (or by $\O(1)$ if there is no confusion) be the universal bundle, and $\O_{\mathbb P (\sE)}(D)$ (or $\O(D)$) be the line bundle $\O_{\mathbb P (\sE)}(1)^{\otimes D}$. The Hermitian metrics on $\sE$ induce by quotient of Hermitian metrics (i.e. Fubini-Study metrics) on $\O_{\mathbb P(\sE)}(1)$ which define a Hermitian line bundle $\overline{\O_{\mathbb P(\sE)}(1)}$ on $\mathbb P(\sE)$.

For every $D\in\mathbb N^+$, let
\begin{equation}\label{definition of E_D}
  E_D=H^0\left(\mathbb P(\sE),\O_{\mathbb P (\sE)}(D)\right)
\end{equation} and let $r(n,D)$ be the its rank over $\O_K$. In fact, we have
\begin{equation}\label{def of r(n,D)}
  r(n,D)={n+D\choose D}.
\end{equation}
For each $v\in M_{K,\infty}$, we denote by $\|\ndot\|_{v,\sup}$ the norm over $E_{D,v}=E_D\otimes_{\O_K,v}\C$ such that
\begin{equation}\label{definition of sup norm}
  \forall s\in E_{D,v},\;\|s\|_{v,\sup}=\sup_{x\in\mathbb P(\sE_K)_v(\C)}\|s(x)\|_{v,\mathrm{FS}},
\end{equation}
where $\|\ndot\|_{v,\mathrm{FS}}$ is the corresponding Fubini-Study norm.

Next, we will introduce the \textit{metric of John}, see \cite{Thompson96} for a systematic introduction of this notion. In general, for a given symmetric convex body $C$, there exists the unique ellipsoid $J(C)$, called \textit{ellipsoid of John}, contained in $C$ whose volume is maximal.

For the $\O_K$-module $E_D$ and any place $v\in M_{K,\infty}$, we take the ellipsoid of John of its unit closed ball defined via the norm$\|\ndot\|_{v,\sup}$, and this ellipsoid induces a Hermitian norm, noted by $\|\ndot\|_{v,J}$ this norm. For every section $s\in E_{D}$, the inequality
\begin{equation}\label{john norm}
  \|s\|_{v,\sup}\leqslant\|s\|_{v,J}\leqslant\sqrt{r(n,D)}\|s\|_{v,\sup}
\end{equation}
is verified from \cite[Theorem 3.3.6]{Thompson96}. In fact, these constants do not depend on the choice of the symmetric convex body.

By the estimate \eqref{john norm}, we have the proposition below.
\begin{prop}[\cite{Chen10b}, Proposition 2.1.14]\label{slope of different norms}
  Let $\E$ be a Hermitian vector bundle of rank $r>0$ over $\spec \O_K$. Then we have the following inequalities:
\begin{equation*}
  \wmu(\E)-\frac{1}{2}\log r\leqslant\wmu(\E_J)\leqslant\wmu(\E),
  \end{equation*}
  \begin{equation*}
  \wmu_{\max}(\E)-\frac{1}{2}\log r\leqslant\wmu_{\max}(\E_J)\leqslant\wmu_{\max}(\E),
\end{equation*}
  \begin{equation*}
  \wmu_{\min}(\E)-\frac{1}{2}\log r\leqslant\wmu_{\min}(\E_J)\leqslant\wmu_{\min}(\E),
\end{equation*}
where $\E_J$ is the Hermitian vector bundle equipped withe the norms of John induced from the original norms.
\end{prop}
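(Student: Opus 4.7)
The plan is to translate the pointwise comparison \eqref{john norm} between $\|\ndot\|_v$ and $\|\ndot\|_{v,J}$ into a comparison of Arakelov degrees, using that both norms are Hermitian so that their behaviour on the top exterior power is controlled by a Gram determinant.

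First I would fix a $K$-basis $\{s_1,\ldots,s_r\}$ of $E_K$ and an archimedean place $v$. Squaring \eqref{john norm}, the associated positive-definite Hermitian forms satisfy $h_v\leqslant h_{v,J}\leqslant r\,h_v$; diagonalising $h_{v,J}$ with respect to $h_v$ produces generalised eigenvalues $\lambda_1,\ldots,\lambda_r\in[1,r]$, so that
\[
\|s_1\wedge\cdots\wedge s_r\|_v\;\leqslant\;\|s_1\wedge\cdots\wedge s_r\|_{v,J}\;=\;\Big(\prod_{i=1}^r\lambda_i\Big)^{1/2}\|s_1\wedge\cdots\wedge s_r\|_v\;\leqslant\;r^{r/2}\|s_1\wedge\cdots\wedge s_r\|_v.
\]
Inserting this into the second expression for $\adeg$ recalled at the beginning of the Arakelov-degree subsection, the non-archimedean contribution is identical for $\E$ and $\E_J$, while the archimedean term changes only by $-\log$ of the ratios above. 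Summing over $v\in M_{K,\infty}$ with multiplicities $[K_v:\Q_v]$ and dividing by $[K:\Q]$ gives $0\leqslant\adeg_n(\E)-\adeg_n(\E_J)\leqslant\tfrac{r}{2}\log r$; a further division by $r$ produces the first displayed inequality chain.

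For $\wmu_{\max}$ and $\wmu_{\min}$ I would rerun the same argument on every non-zero Hermitian sub-bundle $\F\subseteq\E$, \resp every non-zero Hermitian quotient $\G=\E/\F$. Since \eqref{john norm} is a pointwise statement on $E\otimes_{\O_K,v}\C$, it restricts verbatim to $F$; and taking infima over pre-images of a vector shows that the quotient norms inherit the same two-sided bound, still with the constant $\sqrt{r}$ coming from the total rank $r=\rg E$ rather than from $\rg F$ or $\rg G$. The first two steps applied to $\F$ (of rank $r_F\leqslant r$) yield $\wmu(\F)-\tfrac{1}{2}\log r\leqslant\wmu(\F_J)\leqslant\wmu(\F)$, and taking the supremum over all such $\F$ produces the second inequality chain; the argument for $\wmu_{\min}$ is symmetric with quotients.

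The only substantive point is the wedge-product comparison of the first step; the remainder is book-keeping. The main obstacle is to verify that the two-sided bound \eqref{john norm} survives the passage to sub- and quotient Hermitian bundles with the \emph{same} multiplicative constant $\sqrt{r}$ attached to the total rank rather than to the reduced rank, after which the correction $\tfrac{r_F}{2}\log r$ on $\adeg$ collapses to the uniform $\tfrac{1}{2}\log r$ on the slope.
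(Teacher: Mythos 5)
The paper only cites this statement (Chen's Proposition 2.1.14) and gives no internal proof, so there is nothing to compare against textually; I will address the correctness of your argument. Your core idea — translate the pointwise John inequality \eqref{john norm} into a comparison of determinant norms on the top exterior power, sum over places, and normalise; then rerun on sub-bundles and quotients with the John constant $\sqrt r$ tied to the total rank $r=\rg E$ so that the correction survives division by the sub- or quotient rank — is the right one, and the book-keeping in your last two paragraphs is sound.

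However, there is a real gap in the central step. You diagonalise $h_{v,J}$ against $h_v$ and use the Gram-matrix form of $\adeg$, which presupposes that the \emph{original} norm $\|\ndot\|_v$ is Hermitian. But that is exactly the case where the proposition is vacuous: if $\|\ndot\|_v$ is Hermitian its John ellipsoid is its own unit ball, so $\E_J=\E$ and all three chains collapse to equalities, and the $\tfrac12\log r$ slack is never needed. The non-trivial content is for $\E$ carrying non-Hermitian archimedean norms — sup norms in the paper's application — and then there is no Hermitian form $h_v$ to diagonalise against and no Gram matrix to insert into the second formula for $\adeg$. You should instead invoke the general fact (valid for arbitrary norms, via the Haar-volume comparison of unit balls or the quotient-from-tensor construction of determinant norms used in the reference [2.1.9]): if $\|\ndot\|\leqslant\|\ndot\|'\leqslant c\,\|\ndot\|$ on a space of dimension $m$, then the induced determinant norms on the top exterior power satisfy $\|\ndot\|_{\det}\leqslant\|\ndot\|'_{\det}\leqslant c^m\|\ndot\|_{\det}$. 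This replaces your eigenvalue computation, requires no Hermitian structure on the source norm, yields exactly the $r^{r/2}$ you want at each archimedean place, and the remainder of your proof then goes through unchanged.
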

\begin{rema}
The reason why we consider the metric of John is that the hermitian vector bundles equipped with this metrics are useful in the determinant method reformulated by Arakelov geometry, see \cite{Chen1,Chen2} for more details about this. This method is important and useful in counting rational points problems.
\end{rema}
Let $A$ be a ring, and $E$ be an $A$-module. We denote by $\sym^D_{A}(E)$ the symmetric product of degree $D$ of the $A$-module $E$, or by $\sym^D(E)$ if there is no confusion on the base ring.

If we consider the above $E_D$ defined in \eqref{definition of E_D} as a $\O_K$-module, we have the isomorphism of $\O_K$-modules $E_D\cong \sym^D(\mathcal{E})$. Then for every place $v\in M_{K,\infty}$, the Hermitian norm $\|\ndot\|_v$ over $\mathcal{E}_{v,\C}$ induces a Hermitian norm $\|\ndot\|_{v,\mathrm{sym}}$ by the symmetric product. More precisely, this norm is the quotient norm induced by the quotient morphism
\[\sE^{\otimes D}\rightarrow\sym^D(\sE),\]
 where the vector bundle $\overline{\sE}^{\otimes D}$ is equipped with the norm of tensor product of $\overline{\sE}$ over $\spec\O_K$ (cf. \cite[D\'efinition 2.10]{Gaudron08} for a definition). We say that this norm is the \textit{symmetric norm} over $\sym^D(\sE)$. For any place $v\in M_{K,\infty}$, the $\|\ndot\|_{v,J}$ and $\|\ndot\|_{v,\mathrm{sym}}$ are invariant under the action of the unitary group $U(\sE_{v,\C},\|\ndot\|_v)$ of order $n+1$. Then they are proportional and the ratio is independent of the choice of $v\in M_{K,\infty}$ (see \cite[Lemma 4.3.6]{BGS94} for a proof). We denote by $R_0(n,D)$ the constant such that, for every section $0\neq s\in E_{D,v}$, the equality
\begin{equation}\label{symmetric norm vs John norm}
  \log\|s\|_{v,J}=\log\|s\|_{v,\mathrm{sym}}+R_0(n,D).
\end{equation}
is verified.
\begin{defi}\label{definition of E_D with norm}
Let $E_D$ be the $\O_K$-module defined in \eqref{definition of E_D}. For every place $v\in M_{K,\infty}$, we denote by $\E_D$ the Hermitian vector bundle over $\spec\O_K$ which $E_D$ is equipped with the norm of John $\|\ndot\|_{v,J}$ induced by the norms $\|\ndot\|_{v,\sup}$ defined in \eqref{definition of sup norm}. Similarly, we denote by $\E_{D,\mathrm{sym}}$ the Hermitian vector bundle over $\spec\O_K$ which $E_D$ is equipped with the norms $\|\ndot\|_{v,\mathrm{sym}}$ introduced above.
\end{defi}
With all the notations in Definition \ref{definition of E_D with norm}, we have the following result.
\begin{prop}[\cite{Chen1}, Proposition 2.7]\label{symmetric norm vs John norm, constant}
  With all the notations in Definition \ref{definition of E_D with norm}, we have
\[\wmu_{\min}(\E_D)=\wmu_{\min}(\E_{D,\mathrm{sym}})-R_0(n,D).\]
In the above equality, the constant $R_0(n,D)$ defined in the equality \eqref{symmetric norm vs John norm} satisfies the inequality
\begin{equation*}
  0\leqslant R_0(n,D)\leqslant\log\sqrt{r(n,D)},
\end{equation*}
where the constant $r(n,D)=\rg(E_D)$ follows the definition in the equality \eqref{def of r(n,D)}.
\end{prop}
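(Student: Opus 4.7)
The plan is to prove the slope identity and the explicit bounds separately, both exploiting the scalar proportionality $\|\cdot\|_{v,J} = e^{R_0(n,D)}\|\cdot\|_{v,\mathrm{sym}}$ recorded in \eqref{symmetric norm vs John norm}. The identity $\wmu_{\min}(\E_D) = \wmu_{\min}(\E_{D,\mathrm{sym}}) - R_0(n,D)$ is a formal consequence of that proportionality: the bundles $\E_D$ and $\E_{D,\mathrm{sym}}$ share the same underlying projective $\O_K$-module and agree at every finite place, differing only at each archimedean place by the uniform positive scalar $e^{R_0(n,D)}$. Substituting into the basis-wedge formula for the Arakelov degree yields $\adeg(\E_D) = \adeg(\E_{D,\mathrm{sym}}) - [K:\Q]\cdot r(n,D)\cdot R_0(n,D)$, hence $\wmu(\E_D) = \wmu(\E_{D,\mathrm{sym}}) - R_0(n,D)$. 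Since quotient norms of two proportional Hermitian norms are proportional with the same constant, every non-zero quotient Hermitian bundle of $\E_D$ relates to the corresponding quotient of $\E_{D,\mathrm{sym}}$ by the same uniform shift $-R_0(n,D)$ of the slope; taking the infimum over all such quotients gives the identity for $\wmu_{\min}$.

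For the bounds $0 \leqslant R_0(n,D) \leqslant \log\sqrt{r(n,D)}$, the key point is that $e^{R_0(n,D)} = \|s\|_{v,J}/\|s\|_{v,\mathrm{sym}}$ is independent of the choice of $0\neq s\in E_{D,v}$ and of $v\in M_{K,\infty}$, so the ratio may be evaluated on any single convenient test section. Fix an orthonormal basis $(e_0,\ldots,e_n)$ of $\sE_{v,\C}$ and take $s = e_0^D \in \sym^D(\sE_{v,\C})$. A direct computation of the Fubini-Study sup norm yields $\|e_0^D\|_{v,\sup} = \sup_{x\neq 0}|x_0|^D/\|x\|^D = 1$, and the quotient tensor norm satisfies $\|e_0^D\|_{v,\mathrm{sym}} = 1$, witnessed by the unit pure tensor $e_0^{\otimes D}$. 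Substituting $s = e_0^D$ into \eqref{john norm} yields
\begin{equation*}
1 = \|e_0^D\|_{v,\sup} \leqslant \|e_0^D\|_{v,J} \leqslant \sqrt{r(n,D)}\cdot\|e_0^D\|_{v,\sup} = \sqrt{r(n,D)},
\end{equation*}
and combining with $\|e_0^D\|_{v,J} = e^{R_0(n,D)}\cdot\|e_0^D\|_{v,\mathrm{sym}} = e^{R_0(n,D)}$ produces the stated bounds after taking logarithms.

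The main technical point to justify rigorously is the identification $\|e_0^D\|_{v,\mathrm{sym}} = 1$. The upper bound follows at once from $\sigma(e_0^{\otimes D}) = e_0^D$ and $\|e_0^{\otimes D}\|_{v,\otimes} = 1$, where $\sigma:\sE_{v,\C}^{\otimes D}\to\sym^D(\sE_{v,\C})$ is the canonical quotient whose codomain is endowed with the induced quotient norm. The matching lower bound comes from the orthogonal decomposition $\sE_{v,\C}^{\otimes D} = S \oplus \ker(\sigma)$ into the subspace $S$ of symmetric tensors and its orthogonal complement: the symmetric part of any preimage of $e_0^D$ is precisely $e_0^{\otimes D}$, whose tensor norm is $1$, so every preimage has tensor norm at least $1$. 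An alternative cleaner route invokes the uniqueness up to positive scalar of $U(n+1)$-invariant Hermitian inner products on the irreducible representation $\sym^D(\sE_{v,\C})$ to reduce everything to the computation of normalizing constants.
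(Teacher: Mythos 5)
Your argument is correct, and it fills in the content that the paper itself delegates to \cite[Proposition 2.7]{Chen1} without reproducing a proof, so there is no in-text proof to compare against here. The two halves of your proposal are both sound. For the slope identity, the observation that a uniform archimedean rescaling by $e^{R_0(n,D)}$ shifts the Arakelov degree by $-[K:\Q]\,r(n,D)\,R_0(n,D)$, that it commutes with passage to quotient norms, and that it therefore shifts every quotient slope (hence $\wmu_{\min}$) by exactly $-R_0(n,D)$, is the right formal argument and needs no further justification. For the bounds, your test-section computation is the efficient route: both $\|e_0^D\|_{v,\sup}$ and $\|e_0^D\|_{v,\mathrm{sym}}$ equal $1$, so \eqref{john norm} pins $e^{R_0(n,D)}$ into $[1,\sqrt{r(n,D)}]$ directly. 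One small point worth making explicit in a polished write-up: the orthogonality $S\perp\ker(\sigma)$ you invoke for the lower bound on $\|e_0^D\|_{v,\mathrm{sym}}$ rests on the symmetrization operator $P=\tfrac{1}{D!}\sum_{\pi\in\mathfrak{S}_D}\pi$ being an orthogonal projector on $\sE_{v,\C}^{\otimes D}$, which holds because each permutation acts unitarily for the tensor inner product, so $P$ is self-adjoint and idempotent. With that noted, the identity $P(t)=e_0^{\otimes D}$ for any preimage $t$ of $e_0^D$ follows from $\sigma\circ P=\sigma$ and the injectivity of $\sigma$ on $S$, exactly as you say. Your closing remark about invoking Schur's lemma on the irreducible $U(n{+}1)$-representation $\sym^D(\sE_{v,\C})$ is in fact the reason, already cited in the paper via \cite[Lemma 4.3.6]{BGS94}, that $R_0(n,D)$ is a well-defined constant independent of $s$ and of $v$; so that ``alternative route'' is really the prerequisite making any test-section computation meaningful, not a replacement for it.
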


Let $X$ be a pure dimensional closed sub-scheme of $\mathbb{P}(\mathcal{E}_K)$, and $\mathscr{X}$ be the Zariski closure of $X$ in $\mathbb{P}(\mathcal{E})$. We denote by
\begin{equation}\label{evaluation map}
\eta_{X,D}:\;E_{D,K}=H^0\left(\mathbb{P}(\mathcal{E}_K),\O(D)\right)\rightarrow H^0\left(X,\O_{\mathbb P(\sE_K)}(1)|_X^{\otimes D}\right)
\end{equation}
the \textit{evaluation map} over $X$ induced by the closed immersion of $X$ in $\mathbb P(\sE_K)$. We denote by $F_D$ the saturated image of the morphism $\eta_{X,D}$ in $H^0\left(\mathscr{X},\O_{\mathbb P(\sE)}(1)|_\mathscr{X}^{\otimes D}\right)$. In other words, the $\O_K$-module $F_D$ is the largest saturated sub-$\O_K$-module of $H^0\left(\mathscr{X},\O_{\mathbb P(\sE)}(1)|_\mathscr{X}^{\otimes D}\right)$ such that $F_{D,K}=\im(\eta_{X,D})$. When the integer $D$ is large enough, the homomorphism $\eta_{X,D}$ is surjective, which means $F_D=H^0(\mathscr{X},\O_{\mathbb P(\sE)}(1)|_\mathscr{X}^{\otimes D})$.

The $\O_K$-module $F_D$ is equipped with the quotient metrics (from $\E_D$) such that $F_D$ is a Hermitian vector bundle over $\spec \O_K$, noted by $\F_D$ this Hermitian vector bundle.
\begin{defi}[Arithmetic Hilbert-Samuel function]\label{arithmetic hilbert function}
Let $\F_D$ be the Hermitian vector bundle over $\spec\O_K$ defined above from the map \eqref{evaluation map}. We say that the function which maps the positive integer $D$ to $\wmu(\F_D)$ is the \textit{arithmetic Hilbert-Samuel function} of $X$ with respect to the Hermitian line bundle $\overline{\O(1)}$.
\end{defi}
\begin{rema}
  With all the notations in Definition \ref{arithmetic hilbert function}. By \cite[Th\'eor\`eme A]{Randriam06}, we have
  \[h_{\overline{\O(1)}}(X)=\lim_{D\rightarrow+\infty}\frac{\adeg_n(\F_D)}{D^{d+1}/(d+1)!},\]
  where the height $h_{\overline{\O(1)}}(X)=\adeg_n\left(\widehat{c}_1\left(\overline{\O(1)}\right)^{d+1}\cdot\left[\mathscr X\right]\right)$ of $X$ is defined by the arithmetic intersection theory (cf. \cite[Definition 2.5]{Faltings91}).
\end{rema}
\subsection{A height function of projective varieties}
In this part, we will introduce a kind of height functions of a projective scheme $X$. For this aim, first we will introduce the notion of Cayley form of the scheme $X$, where we follow the construction in \cite[\S3]{Chen1}. Next, we will construct a family of generators of $X$ from its Cayley form, and we define this height of $X$ as the slope of the Hermitian vector bundle generated by these generators equipped with some induced norms.

Cayley form has a very close relation to Chow form, which is also introduced in the above reference. In this paper, the notion of Chow form will not be used.
\subsubsection*{Cayley forms}
Let $\overline{\sE}=\left(\sE,\left(\|\ndot\|_v\right)_{v\in M_{K,\infty}}\right)$ be a Hermitian vector bundle of rank $n+1$ over $\spec\O_K$. Let $\check{G}=\hbox{Gr}\left(d+1,\sE_K^\vee\right)$ be the Grassmannian which classifies all the quotients of rank $d+1$ of $\sE_K^\vee$ (or we can say all the sub-space of rank $d+1$ of $\sE_K$). By the Pl\"{u}cker embedding $\check{G}\rightarrow\mathbb P\left(\wedge^{d+1}\sE_K^\vee\right)$, the coordinate algebra $B(\check{G})=\bigoplus\limits_{D\geqslant 0}B_D(\check{G})$ of $\check{G}$ is a homogeneous quotient algebra of the algebra $\bigoplus\limits_{D\geqslant 0}\sym_K^D\left(\wedge^{d+1}\sE_K^\vee\right)$. To explain the role of Pl\"{u}cker coordinate, we consider the following construction: we denote by
\[\theta:\;\sE_K^\vee\otimes_k(\wedge^{d+1}\sE_K)\rightarrow\wedge^d\sE_K\]
 the homomorphism which maps $\xi\otimes(x_0\wedge\cdots\wedge x_d)$ to
\begin{equation*}
  \sum_{i=0}^d(-1)^i\xi(x_i)x_0\wedge\cdots\wedge x_{i-1}\wedge x_{i+1}\wedge\cdots\wedge x_d.
\end{equation*}
 Let $\widetilde{\Gamma}$ be the sub-scheme of $\mathbb P(\sE_K)\times_K \mathbb P\left(\wedge^{d+1}\sE_K^\vee\right)$ which classifies the point $(\xi,\alpha)$ such that $\theta(\xi\otimes\alpha)=0$. Let $\widetilde{p}: \mathbb P(\sE_K)\times_K\mathbb P\left(\wedge^{d+1}\sE_K^\vee\right)\rightarrow\mathbb P(\sE_K)$ and $\widetilde{q}: \mathbb P(\sE_K)\times_K\mathbb P\left(\wedge^{d+1}\sE_K^\vee\right)\rightarrow\mathbb P\left(\wedge^{d+1}\sE_K^\vee\right)$ be the two canonical projections. Then we have the following proposition.
\begin{prop}[Proposition 2.6, \cite{Liu-non_reduced}]\label{cayleyform}
   Let $X$ be a pure dimensional closed sub-scheme of $\mathbb P(\sE_K)$, which is of dimension $d$ and of degree $\delta$. We suppose that $[X]=\sum\limits_{i\in I}m_iX_i$ is the fundamental cycle (see \cite[\S 1.5]{Fulton} for the definition) of $X$. Then $\widetilde{q}\left(\widetilde{\Gamma}\cap \widetilde{p}^{-1}(X)\right)$ is a hypersurface of degree $\delta$ of $\mathbb P\left(\wedge^{d+1}\sE_K^\vee\right)$. In addition, the fundamental cycle of the hypersurface $\widetilde{q}\left(\widetilde{\Gamma}\cap \widetilde{p}^{-1}(X)\right)$ has the form of
  \[\sum_{i\in I}m_i\widetilde{X}'_i,\]
  where all these $\widetilde{X}'_i$ are distinct integral hypersurfaces of degree $\deg(X_i)$ in $\mathbb P\left(\wedge^{d+1}\sE_K^\vee\right)$ respectively.
\end{prop}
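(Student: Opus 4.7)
The plan is to exploit that the first projection $\widetilde{p}: \widetilde{\Gamma} \to \mathbb{P}(\sE_K)$ is a Zariski-locally trivial projective subbundle of the trivial bundle $\mathbb{P}(\sE_K) \times \mathbb{P}(\wedge^{d+1}\sE_K^\vee)$. First I would verify this by a direct coordinate computation: choosing a basis $e_0,\ldots,e_n$ of $\sE_K$ so that $\xi = e_0^*$, one checks that the contraction map $\iota_\xi: \wedge^{d+1}\sE_K \to \wedge^d\sE_K$ has rank exactly $\binom{n}{d}$, and hence kernel of constant dimension $\binom{n}{d+1}$. This makes $\widetilde{p}$ flat with equidimensional fibers of dimension $N := \binom{n}{d+1}-1$, so that flat pullback of cycles gives
\[\bigl[\widetilde{\Gamma} \cap \widetilde{p}^{-1}(X)\bigr] = \sum_{i\in I} m_i\,\bigl[\widetilde{\Gamma} \cap \widetilde{p}^{-1}(X_i)\bigr],\]
with each $\widetilde{\Gamma} \cap \widetilde{p}^{-1}(X_i)$ irreducible of dimension $d + N$.

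Next, I would analyze each $\widetilde{X}'_i := \widetilde{q}(\widetilde{\Gamma} \cap \widetilde{p}^{-1}(X_i))$ by identifying it with the classical Cayley hypersurface of $X_i$. The key characterization is: for a decomposable $\alpha \in \check{G}$ corresponding to a $(d+1)$-dimensional subspace $V_\alpha \subset \sE_K$, one has $\alpha \in \widetilde{X}'_i$ precisely when the projective linear subspace $\mathbb{P}(V_\alpha^\perp) \subset \mathbb{P}(\sE_K)$ of dimension $n-d-1$ meets $X_i$. This is the classical incidence condition defining the Cayley form $R_{X_i}$, a homogeneous polynomial of degree $\deg(X_i)$ in the coordinates of $\wedge^{d+1}\sE_K^\vee$. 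Once $\widetilde{X}'_i$ is identified scheme-theoretically with the zero locus of $R_{X_i}$ in $\mathbb{P}(\wedge^{d+1}\sE_K^\vee)$, it appears as a hypersurface of degree $\deg(X_i)$. Distinctness of the $\widetilde{X}'_i$ follows because each $X_i$ can be reconstructed from $\widetilde{X}'_i$ as the $\widetilde{p}$-image of a generic fiber of $\widetilde{q}$ restricted to $\widetilde{\Gamma}\cap\widetilde{p}^{-1}(X_i)$, so distinct irreducible $X_i$ yield distinct irreducible Cayley hypersurfaces.

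The main obstacle will be the degree computation together with the compatibility of multiplicities $m_i$ in the decomposition of the fundamental cycle. I would carry out the degree count by a projection-formula argument: pulling back a generic line $\ell \subset \mathbb{P}(\wedge^{d+1}\sE_K^\vee)$ through $\widetilde{q}$ and pushing forward via $\widetilde{p}$ reduces the intersection number $\widetilde{X}'_i \cdot \ell$ to counting $(d+1)$-subspaces in a generic pencil whose associated linear subspace in $\mathbb{P}(\sE_K)$ meets $X_i$, which equals $\deg(X_i)$ by a Schubert-type intersection count on the Grassmannian. The compatibility of multiplicities then follows from the flat pullback formula together with the generic birationality of $\widetilde{q}$ onto each component $\widetilde{X}'_i$, giving $[\widetilde{q}(\widetilde{\Gamma}\cap\widetilde{p}^{-1}(X))] = \sum_{i\in I} m_i [\widetilde{X}'_i]$ as claimed.
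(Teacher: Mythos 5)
The paper cites this as Proposition~2.6 of \cite{Liu-non_reduced} and gives no proof of its own, so there is no internal argument to compare yours against; I assess your proposal on its own terms. Your reduction correctly identifies $\widetilde p$ as a $\mathbb P^N$-bundle with $N=\binom{n}{d+1}-1$, so that each $\widetilde\Gamma\cap\widetilde p^{-1}(X_i)$ is irreducible of dimension $d+N$. But you never confront the resulting dimension mismatch: a hypersurface of $\mathbb P\big(\wedge^{d+1}\sE_K^\vee\big)$ has dimension $\binom{n+1}{d+1}-2$, and the equality $d+N=\binom{n+1}{d+1}-2$ is equivalent to $\binom{n}{d}=d+1$, which holds only when $d\in\{0,\,n-1\}$. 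For $1\le d\le n-2$ and $n\ge 3$, the set-theoretic image $\widetilde q\big(\widetilde\Gamma\cap\widetilde p^{-1}(X_i)\big)$ has dimension strictly less than $\binom{n+1}{d+1}-2$ and therefore cannot be a hypersurface of the ambient projective space; the proposed ``scheme-theoretic identification with the zero locus of $R_{X_i}$'' fails outright, and there can be no generic birationality of $\widetilde q$ onto a codimension-one $\widetilde X'_i$ in the sense you invoke. Your incidence characterization of membership in $\widetilde X'_i$ is moreover only given for decomposable $\alpha$, i.e.\ on $\check G$; it detects the Chow divisor inside the Grassmannian but says nothing by itself about a hypersurface of $\mathbb P\big(\wedge^{d+1}\sE_K^\vee\big)$.

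The dimension accident does occur in the one case the paper actually exploits downstream, namely hypersurfaces ($d=n-1$): there $N=0$, $\widetilde p$ is an isomorphism, and the composition $\widetilde q\circ\widetilde p^{-1}$ is the linear isomorphism $\mathbb P(\sE_K)\cong\mathbb P\big(\wedge^{n}\sE_K^\vee\big)$ sending $[\xi]$ to the line $\wedge^{n}(\ker\xi)$, so $\widetilde q\big(\widetilde\Gamma\cap\widetilde p^{-1}(X)\big)$ is a linear image of $X$ and is trivially a hypersurface of degree $\delta$ with fundamental cycle $\sum_{i}m_i\widetilde X'_i$. Any argument valid in the stated generality would have to pass from the Chow divisor on $\check G$ to a degree-$\delta$ section of the Pl\"ucker line bundle on $\mathbb P\big(\wedge^{d+1}\sE_K^\vee\big)$ whose restriction to $\check G$ cuts out that divisor (using $\Pic(\check G)\cong\mathbb Z$ generated by the Pl\"ucker class), and then interpret the conclusion as a statement about the hypersurface defined by that section rather than about the set-theoretic image of $\widetilde q$; your proposal does not make this passage and treats the image itself as the hypersurface.
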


\begin{defi}\label{cayley form 2}
We denote by $\Psi_{X,K}$ the sub-$K$-vector space of dimension $1$ of $\sym_K^\delta\left(\wedge^{d+1}\sE_K^\vee\right)$ which defines the hypersurface determined in Proposition \ref{cayleyform}. We call it the \textit{Cayley form} of $X$. The incidence variety $\Gamma'$ of $\mathbb P(\sE_K)\times_K\check{G}$ is the intersection of $\widetilde{\Gamma}$ and $\mathbb P(\sE_K)\times_K\check{G}$ (embedded in $\mathbb P(\sE_K)\times_K\mathbb P\left(\wedge^{d+1}\sE_K^\vee\right)$).
\end{defi}

We will construct a system of generators of $X$ from $\Psi_{X,K}$ which is of degree $\delta$, where we follow the construction of \cite[\S 3]{Chen1}. Let $\psi_X\in\sym_K^\delta\left(\wedge^{d+1}\sE_K^\vee\right)$ be a non-zero element which represents the Cayley form $\Psi_{X,K}$ of X. This element is considered to be a homogeneous polynomial of degree $\delta$ in $\wedge^{d+1}\mathcal E_K^{\vee}$. Let $x,y_0,\ldots,y_d$ be the variables in $\mathcal E_K$ and $\xi$ be a variable in $\mathcal E_K^\vee$. For any $i=0,1,\ldots,d$, let $z_i=\xi(x)y_i-\xi(y_i)x$. Since
\begin{eqnarray*}
  & &z_0\wedge\cdots\wedge z_d\\
&=&\xi(x)^{d+1}y_0\wedge\cdots\wedge y_d-\sum_{i=0}^d\xi(x)^d\xi(y_i)y_0\wedge\cdots\wedge y_{i-1}\wedge x\wedge y_{i+1}\wedge\cdots\wedge y_d\\
&=&\xi(x)^d\left(\xi(x)y_0\wedge\cdots\wedge y_d-\sum_{i=0}^d\xi(y_i)y_0\wedge\cdots\wedge y_{i-1}\wedge x\wedge y_{i+1}\wedge\cdots\wedge y_d\right),
\end{eqnarray*}
so we have
\begin{eqnarray*}
 & & \psi_{X,K}(z_0\wedge\cdots\wedge z_d)\\
&=&\xi(x)^{\delta d}\psi_{X,K}\left(\xi(x)y_0\wedge\cdots\wedge y_d-\sum_{i=0}^d\xi(y_i)y_0\wedge\cdots\wedge y_{i-1}\wedge x\wedge y_{i+1}\wedge\cdots\wedge y_d\right).
\end{eqnarray*}
By specifying  $x,y_0,\ldots,y_d$ in
\[\psi_{X,K}\left(\xi(x)y_0\wedge\cdots\wedge y_d-\sum_{i=0}^d\xi(y_i)y_0\wedge\cdots\wedge y_{i-1}\wedge x\wedge y_{i+1}\wedge\cdots\wedge y_d\right)\]
we obtain a linear system $I_{X,K}$ of homogeneous polynomials of degree $\delta$ in $\mathcal E_K^\vee$, which defines a sub-scheme $\widetilde{X}$ of $\mathbb P(\mathcal E_K)$. In fact, an anti-symmetric homomorphism $\sE_K^\vee\rightarrow\sE_K$ acting on an element $\xi$ in $\sE_K^\vee$ can be written as a combination over $K$ of elements of the form $\xi(x)y-\xi(y)x$, where $x$ and $y$ are elements in $\sE_K$.
\begin{defi}\label{definition of wmu(I_X)}
  With all the above notations and operations, let $I_X$ be the largest saturated sub-$\O_K$-module of $\sym^\delta(\mathcal E)$ such that $I_X\otimes_{\O_K}K=I_{X,K}$. We define $\overline{I}_X$ as the Hermitian sub-vector bundle of $\sym^\delta(\overline{\mathcal E})$ equipped with the induced norm by the symmetric norms over $\sym^{\delta}(\overline{\mathcal E})$.
\end{defi}
\begin{rema}\label{X is hypersurface I_X}
  We consider a special case. Let $X$ be a hypersurface of $\mathbb P(\sE_K)$ which is of degree $\delta$ defined by $f\in\sym^\delta(\sE_K^\vee)$. In this case, the module $I_{X,K}$ is generated by the element $f$. We refer the readers to \cite[Examples 2.3 (b), \S3.2.B]{Gelfandal94} for a proof.
\end{rema}
\subsubsection*{A height function induced from Cayley forms}
We consider the Cayley form of $X$ defined in Definition \ref{cayley form 2}. By the argument in Remark \ref{X is hypersurface I_X}, the system $I_{X,K}$ defined in Definition \ref{definition of wmu(I_X)} is generated by the polynomial $f(T_0,\ldots,T_n)$, and in fact $\overline{I}_X$ if a Hermitian line bundle over $\spec \O_K$. Then
\begin{equation}\label{explicit I_X when X is hypersurface}
\wmu(\overline{I}_X)=-\sum_{\p\in\spm\O_K}\frac{[K_v:\Q_v]}{[K:\Q]}\log\|f\|_{\p}-\sum_{v\in M_{K,\infty}}\frac{[K_v:\Q_v]}{[K:\Q]}\log\|f\|_{v,\mathrm{sym}},
\end{equation}
where $\|\ndot\|_{v,\mathrm{sym}}$ is the symmetric norm over $\sym^\delta(\sE_K^\vee)$ for any $v\in M_{K,\infty}$.
\section{Arithmetic Hilbert-Samuel function of hypersurfaces}\label{uniform bound of arithmetic Hilbert}
In this section, we will give both an upper and a lower bounds of the arithmetic Hilbert-Samuel function $\F_D$ defined in Definition \ref{arithmetic hilbert function}. We will first recall a naive lower bound of $\F_D$ considered in \cite{Chen1,Chen2}. In order to get better bounds, we will consider an uniform estimate of the arithmetic Hilbert-Samuel function of projective spaces with respect to the universal bundle, and we will give an estimate for the case of hypersurfaces by comparing some norms.
\subsection{A naive lower bound}
By \cite[Proposition 2.9]{Chen1} and \cite[\S 2.4]{Chen2}, we have the following naive uniform lower bounds of $\F_D$
\begin{equation}\label{trivial estimate of F_D}
  \wmu(\F_D)\geqslant\wmu_{\min}(\E_D)\geqslant-\frac{1}{2}D\log(n+1),
\end{equation}
which is verified uniformly for all integers $D\geqslant1$.
\subsection{Arithmetic Hilbert-Samuel function of projective spaces}

In this paragraph, we will give an estimate of the arithmetic Hilbert-Samuel function of the projective space of dimension $n$ with respect to the universal bundle.
\begin{prop}[\cite{Gaudron06}, Proposition 4.2]\label{arithmetic hilbert for Pn}
  Let $\E_{D,\mathrm{sym}}$ be the same as in Definition \ref{definition of E_D with norm}, and $r(n,D)=\rg(E_D)={n+D\choose D}$, $D\geqslant1$. Then we have
  \begin{equation*}
    \wmu(\E_{D,\mathrm{sym}})=-\frac{1}{2r(n,D)}\sum\limits_{\begin{subarray}{c} i_0+\cdots+i_n=D \\ i_0,\ldots,i_n\geqslant0\end{subarray}}\log\left(\frac{D!}{i_0!\cdots i_n!}\right)+D\wmu(\overline{\mathcal{E}}),
  \end{equation*}
  where $\overline{\sE}$ is a Hermitian vector bundle of rank $n+1$ over $\spec\O_K$.
\end{prop}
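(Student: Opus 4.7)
The plan is to compute $\adeg(\E_{D,\mathrm{sym}})$ explicitly by diagonalizing the symmetric norm at each archimedean place and then divide by the rank $r(n,D)$ to extract the slope. The key structural fact is the $U(n+1)$-equivariance of the symmetric norm, which lets me pick an orthonormal basis of $\mathcal{E}_v$ for each $v \in M_{K,\infty}$ in which the monomial basis of $\sym^D(\mathcal{E})_v$ diagonalizes the Hermitian form.

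For every archimedean place $v$, fix an orthonormal basis $(e_0^{(v)},\dots,e_n^{(v)})$ of $(\mathcal{E}_v, \|\cdot\|_v)$. The central combinatorial step is to show that the monomials $\{(e^{(v)})^\alpha\}_{|\alpha|=D}$ are mutually orthogonal with respect to $\|\cdot\|_{v,\mathrm{sym}}$ and to compute their squared norms in terms of the multinomial coefficient $\binom{D}{i_0,\ldots,i_n}$. I would establish this by lifting through the quotient morphism $\mathcal{E}_v^{\otimes D}\to\sym^D(\mathcal{E})_v$: via the identification of the quotient space with the orthogonal complement to $\ker\pi$ (i.e.\ the $S_D$-invariant subspace of the tensor power), the norm of each monomial reduces to the norm of a suitable symmetrization of $(e^{(v)})^{\otimes\alpha}$, which is determined by an orbit–stabilizer count under the $S_D$-action (orbit size $\binom{D}{i_0,\ldots,i_n}$, stabilizer of order $i_0!\cdots i_n!$).

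With the monomial norms established, I would pick a global $K$-basis $(s_0,\dots,s_n)$ of $\mathcal{E}_K$ and work with the monomial basis $(s^\alpha)_{|\alpha|=D}$ of $\sym^D(\mathcal{E}_K)$. The change-of-basis matrix $A_v$ between $(s_i)$ and the orthonormal $(e_i^{(v)})$ acts on $\sym^D$ through its symmetric-power representation, whose determinant is $\det(A_v)^M$ with $M = \frac{D\cdot r(n,D)}{n+1}$; the exponent $M$ follows by summing the monomial weights $(i_0,\ldots,i_n)$ over all $|\alpha|=D$, which by the symmetry that permutes the coordinates equals $(M,\ldots,M)$. Consequently the Gram determinant at $v$ factors as
\[
\det\bigl(\langle s^\alpha, s^\beta\rangle_{v,\mathrm{sym}}\bigr) = \bigl(\det\langle s_i,s_j\rangle_v\bigr)^{M}\prod_{|\alpha|=D}\|(e^{(v)})^\alpha\|_{v,\mathrm{sym}}^{2},
\]
and an analogous $M$-th power identity controls the finite-part contribution to $\adeg(\E_{D,\mathrm{sym}})$.

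Combining the two contributions yields an identity of the form $\adeg(\E_{D,\mathrm{sym}}) = M\cdot\adeg(\overline{\mathcal{E}}) - \frac{[K:\Q]}{2}\sum_{|\alpha|=D}\log\frac{D!}{i_0!\cdots i_n!}$. Dividing by $[K:\Q]\cdot r(n,D)$ and invoking $M/r(n,D) = D/(n+1)$ converts the first summand into $D\wmu(\overline{\mathcal{E}})$ and produces the stated identity. The principal obstacle is the combinatorial norm computation of the second paragraph, where the exact multinomial factor and its sign are settled and where one must keep track of the difference between summing over $S_D$ and summing over distinct representatives of the orbit; a secondary delicate point is verifying that the symmetric-power representation has determinant $(\det)^M$ for exactly $M = \frac{D\cdot r(n,D)}{n+1}$.
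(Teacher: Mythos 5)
Your overall plan is the right one and its scaffolding is sound: the monomials $e^\alpha$ are mutually orthogonal for the symmetric norm, the symmetric power of a linear map has determinant $(\det)^M$ with $M=\frac{D\,r(n,D)}{n+1}$, the Gram-determinant factorization holds at each archimedean place, the finite-place contribution is governed by the same $M$-th power identity, and $M/r(n,D)=D/(n+1)$ indeed converts the $\adeg(\overline{\mathcal E})$ term into $D\wmu(\overline{\mathcal E})$ after normalization. (Note the paper does not prove this proposition; it is quoted from Gaudron, so there is no internal proof to compare against.)

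The genuine gap is precisely the step you flag as the ``principal obstacle,'' and when carried out it delivers the \emph{opposite} sign to the one you assert. The orbit--stabilizer computation you describe, applied to the quotient norm from $\mathcal E^{\otimes D}\twoheadrightarrow\sym^D(\mathcal E)$, gives $\|e^\alpha\|_{v,\mathrm{sym}}^2=\frac{i_0!\cdots i_n!}{D!}\leqslant 1$; this is the same formula used in the paper's own Lemma~\ref{product}. Substituting into your Gram factorization then yields
\[
\adeg(\E_{D,\mathrm{sym}})=M\,\adeg(\overline{\mathcal E})\;+\;\frac{[K:\Q]}{2}\sum_{|\alpha|=D}\log\frac{D!}{i_0!\cdots i_n!},
\]
with a \emph{plus} sign, not the minus sign written in your final paragraph. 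As a sanity check: for $\overline{\mathcal E}=(\O_K^{\oplus(n+1)},\ell^2)$ the monomials are shorter than unit vectors, so $\det(\mathrm{Gram})<1$ at each archimedean place and the Arakelov degree of $\E_{D,\mathrm{sym}}$ is positive; asymptotically this is also what is forced by the positive height $\frac{(n+1)(\mathcal H_{n+1}-1)}{2}$ of $\P^n$ with the Fubini--Study metric, which must appear as the leading coefficient of $\adeg_n(\E_D)$. Your argument, completed honestly, therefore gives $\wmu(\E_{D,\mathrm{sym}})=\frac{1}{2r(n,D)}\sum_\alpha\log\frac{D!}{i_0!\cdots i_n!}+D\wmu(\overline{\mathcal E})$, which disagrees in sign with the proposition as stated; rather than silently flipping the sign to match, you should verify the norm convention in the cited source.
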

\begin{rema}
  With all the notations in Proposition \ref{arithmetic hilbert for Pn}. If $D=0$, we have $\wmu(\E_D)=0$, puisque $\E_D\cong \O_K$. Si $D<0$, le $\O_K$-module $E_D$ est nul.
\end{rema}

Let
\begin{equation}\label{constant C}
C(n,D)=\sum\limits_{\begin{subarray}{c} i_0+\cdots+i_n=D \\ i_0,\ldots,i_n\geqslant0\end{subarray}}\log\left(\frac{i_0!\cdots i_n!}{D!}\right),
\end{equation}
then we have
\[\adeg_n(\E_{D,\mathrm{sym}})=\frac{1}{2}C(n,D)+Dr(n,D)\wmu(\mathcal E)\]
by Proposition \ref{arithmetic hilbert for Pn}. By Proposition \ref{symmetric norm vs John norm, constant}, we obtain
\begin{equation*}
 \wmu(\E_D)=\frac{1}{2r(n,D)}C(n,D)+D\wmu(\overline{\mathcal{E}})-R_0(n,D),
\end{equation*}
where $\E_D$ is equipped with the norms of John induced by the spermium norms when $D\geqslant0$, and the constant $R_0(n,D)$ is defined in Proposition \ref{symmetric norm vs John norm, constant}, which satisfies
\begin{equation*}
  0\leqslant R_0(n,D)\leqslant\log\sqrt{r(n,D)}.
\end{equation*}
See Definition \ref{definition of E_D with norm}.

By the proposition in \cite[Annexe]{Gaudron08}, we have
\begin{eqnarray*}
  C(n,D)&=&-\left(\frac{1}{2}+\cdots+\frac{1}{n+1}\right)\left(D+o(D)\right)r(n,D)\\
&=&-\frac{1}{n!}\left(\frac{1}{2}+\cdots+\frac{1}{n+1}\right)D^{n+1}+o(D^{n+1})
\end{eqnarray*}
 when $n\geqslant2$, but the estimate of remainder is implicit. In fact, we have a finer explicit estimate, which is (in Theorem \ref{estimate of C(n,D)})
\begin{eqnarray}\label{estimation of C}
C(n,D)&=&\frac{1-\mathcal H_{n+1}}{n!}D^{n+1}-\frac{n-2}{2n!}D^n\log D\\
& &+\frac{1}{n!}\Biggr(\left(-\frac{1}{6}n^3-\frac{3}{4}n^2-\frac{13}{12}n+2\right)\mathcal H_n\nonumber\\
& &\:+\frac{1}{4}n^3+\frac{17}{24}n^2+\left(\frac{119}{72}-\frac{1}{2}\log\left(2\pi\right)\right)n-4+\log\left(2\pi\right)\Biggr)D^n\nonumber\\
& &+o(D^n),\nonumber
\end{eqnarray}
where $\mathcal H_n=1+\frac{1}{2}+\cdots+\frac{1}{n}$. In addition, we give both a uniform lower bound and an upper bound of the reminder explicitly. We will give the details of the above estimate in the appendix of this paper.

\subsection{An upper bound and a lower bound of the arithmetic Hilbert-Samuel function of hypersurfaces}
In this part, following Definition \ref{arithmetic hilbert function}, we will give both an upper and a lower bounds of the arithmetic Hilbert-Samuel function of a hypersurface of $\mathbb P(\sE_K)$ with respect to the universal bundle, which are explicit and uniform, where we consider the case of $\overline{\sE}=\left(\O_K^{\oplus(n+1)},\left(\|\ndot\|_v\right)_{v\in M_{K,\infty}}\right)$ equipped with $\ell^2$-norms. This means that for every embedding $v:K\hookrightarrow\C$, the norm $\|\ndot\|_v$ which is with respect to this embedding maps $(x_0,\ldots,x_n)$ to
\begin{equation}\label{l^2-norm}
\sqrt{|v(x_0)|^2+\cdots+|v(x_n)|^n}.
 \end{equation}
In this case, we have $\wmu\left(\overline{\sE}\right)=0$ et $\mathbb P(\sE_K)=\mathbb P^n_K$. The arithmetic Hilbert-Samuel function follows Definition \ref{arithmetic hilbert function}. These estimates are better than the estimate \cite{David_Philippon99} and \cite{Chen1} in the case of projective hypersurfaces.

For this target, first we introduce some numerical lemmas.
\begin{lemm}\label{constant of Jia Jiao}
  Let $i_0,\ldots,i_n,j_n,\ldots,j_n$ be some positive integers, which satisfy $i_0+\cdots+i_n=D$ and $j_0+\cdots+j_n=D'$. Let
  \begin{equation}\label{constant G}
G(D',n)=\left\{
    \begin{array}{ll}
      (n+1)^n, & \hbox{si $D'\leqslant n$;} \\
      \frac{e^{2n+3}D'^{\frac{n}{2}}}{(2\pi)^\frac{n+3}{2}(n+1)^{\frac{n+1}{2}}}, & \hbox{si $D'\geqslant n+1$.}
    \end{array}
  \right.
\end{equation}
  Then we have
\[G(D',n)^{-1}\leqslant\frac{{i_0+j_0\choose i_0}\cdots{i_n+j_n\choose i_n}}{{D+D'\choose D}}\leqslant1.\]
\end{lemm}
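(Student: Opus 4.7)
\emph{Proof plan.} The upper bound $\prod_l \binom{i_l+j_l}{i_l}\leqslant \binom{D+D'}{D}$ should follow in one line from the Chu--Vandermonde identity
\[
\binom{D+D'}{D} \;=\; \sum_{\substack{(k_0,\ldots,k_n)\in\mathbb{Z}_{\geqslant 0}^{n+1}\\ k_0+\cdots+k_n = D,\ k_l\leqslant i_l+j_l}} \prod_{l=0}^n \binom{i_l+j_l}{k_l}.
\]
The tuple $(k_0,\ldots,k_n)=(i_0,\ldots,i_n)$ is admissible (since $\sum_l i_l=D$ and $i_l\leqslant i_l+j_l$), so $\prod_l \binom{i_l+j_l}{i_l}$ appears as one of the non-negative summands on the right and is therefore dominated by the whole sum.

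For the lower bound, the first step is to clear factorials to obtain the algebraic identity
\[
\frac{\binom{D+D'}{D}}{\prod_l \binom{i_l+j_l}{i_l}} \;=\; \frac{\binom{D+D'}{\,i_0+j_0,\ldots,i_n+j_n\,}}{\binom{D}{i_0,\ldots,i_n}\,\binom{D'}{j_0,\ldots,j_n}},
\]
so the task becomes bounding the right-hand side above by $G(D',n)$ uniformly in $(i_l),(j_l)$. The two pieces of $G$ reflect a natural dichotomy. When $D'\leqslant n$ the combinatorics of $(j_l)$ is tightly constrained --- at most $D'\leqslant n$ entries can be positive --- and I would aim for the constant $(n+1)^n$ by elementary counting, bounding each of the three multinomial coefficients above by simple products depending only on $n$ and $D'$.

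The regime $D'\geqslant n+1$ is the technical heart, and I would treat it via Stirling's formula in the explicit Robbins form $\sqrt{2\pi m}(m/e)^m\leqslant m! \leqslant e\sqrt{m}(m/e)^m$, applied to each factorial in the three multinomials. The ``shape'' part of Stirling is what forces the exponential-in-degree contributions to cancel, via the concavity of the Shannon entropy:
\[
(D+D')\,H\!\left(\tfrac{i_l+j_l}{D+D'}\right) \;\geqslant\; D\,H\!\left(\tfrac{i_l}{D}\right) + D'\,H\!\left(\tfrac{j_l}{D'}\right).
\]
The $\sqrt{m}$ prefactors combine to produce exactly the $(2\pi)^{-(n+3)/2}(n+1)^{-(n+1)/2}D'^{\,n/2}$ dependence in $G(D',n)$, while the Robbins correction factors $e^{1/12m}$ combine to the absolute constant $e^{2n+3}$.

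The main obstacle is not the strategy but the uniform bookkeeping. Two technicalities will need particular care: first, indices $l$ with $i_l=0$ or $j_l=0$ must be handled by hand, since Stirling does not apply to $0!$ even though the corresponding factor $\binom{i_l+j_l}{i_l}$ already equals $1$; second, the Jensen defect in the entropy inequality above must be quantified explicitly, so that the final estimate depends only on $D'$ and $n$ and not on the distribution $(i_l)/D$. This second point is what makes the lemma delicate: without an effective control of the Jensen gap, the bound would grow with $D$, and one must exploit the constraint $\sum_l(i_l+j_l)=D+D'$ to squeeze out a polynomial-in-$D'$ dependence alone.
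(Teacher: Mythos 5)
Your upper bound via Chu--Vandermonde is correct and in fact cleaner than the paper's one-line ``obtained by the definition directly,'' and your elementary treatment of the case $D'\leqslant n$ matches the paper in spirit. The regime $D'\geqslant n+1$ is where the plan fails, and it fails exactly at the point you flag. The Jensen defect in the concavity inequality for the Shannon entropies of $i/D$, $j/D'$ and $(i+j)/(D+D')$ is \emph{not} a bookkeeping quantity that the constraint $\sum_l(i_l+j_l)=D+D'$ lets you ``squeeze out'': when the profiles $(i_l)$ and $(j_l)$ are anticorrelated it grows linearly in $\min(D,D')$, so no argument of the type you describe can produce a bound depending only on $n$ and $D'$. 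Concretely, take $n=1$, $D=D'=10$, $i=(9,1)$, $j=(1,9)$; then
\[
\frac{\binom{10}{9}\binom{10}{1}}{\binom{20}{10}}=\frac{100}{184756}\approx 5.4\times 10^{-4},
\qquad\text{while}\qquad
G(10,1)^{-1}=\frac{(2\pi)^{2}\cdot 2}{e^{5}\sqrt{10}}\approx 0.168.
\]
The asserted lower bound fails, so the lemma is false as stated for anticorrelated profiles; no rearrangement of your Stirling bookkeeping can close this gap.

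For comparison, the paper's own proof contains the same flaw in a different guise: after applying Stirling in Robbins form, it asserts without justification that the function $F(i,j)=\prod_l (i_l+j_l)^{i_l+j_l+1/2}\big/\bigl(i_l^{i_l+1/2}j_l^{j_l+1/2}\bigr)$, restricted to the slice $\sum_l i_l=D$, $\sum_l j_l=D'$, attains its minimum at the balanced point $i_l=D/(n+1)$, $j_l=D'/(n+1)$. The same example refutes this as well, since $F\approx 7.4\times 10^{2}$ at $i=(9,1)$, $j=(1,9)$, whereas $F\approx 4.2\times 10^{5}$ at $(5,5),(5,5)$. Your entropy reformulation actually makes the obstruction more visible than the paper's ad hoc minimisation does; the honest conclusion from it is not that more careful bookkeeping will succeed, but that the statement must be repaired --- for instance by letting $G$ degrade with $\min(D,D')$, or by restricting the admissible $(i_l,j_l)$ to the correlated configurations that are actually needed in Lemma \ref{product} --- before any Stirling-based strategy, yours or the paper's, can go through.
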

\begin{proof}
   The inequality
\[\frac{{i_0+j_0\choose i_0}\cdots{i_n+j_n\choose i_n}}{{D+D'\choose D}}\leqslant1\]
is obtained by the definition directly.

For the other side, if one of the $D$ and $D'$ is larger than or equal to $n$, then we can suppose that $i_0\geqslant j_1\geqslant\cdots\geqslant j_n\geqslant0$, and $D'\leqslant n$. In this case, if $D\leqslant n$, we obtain
\begin{equation*}
  \frac{{i_0+j_0\choose i_0}\cdots{i_n+j_n\choose i_n}}{{D+D'\choose D}}\geqslant\frac{1}{{D+D'\choose D}}\geqslant\frac{1}{{2n\choose n}}\geqslant\frac{1}{(n+1)^n};
\end{equation*}
if $D\geqslant n+1$, then we have
\begin{equation*}
  \frac{{i_0+j_0\choose i_0}\cdots{i_n+j_n\choose i_n}}{{D+D'\choose D}}\geqslant\frac{(\frac{D}{n+1}+1)^{D'}}{(D+1)^{D'}}\geqslant\left(\frac{2}{n+2}\right)^{D'}\geqslant\frac{1}{(n+1)^n}.
\end{equation*}

If $D$ and $D'$ are both larger than or equal to $n+1$, by the Stirling formula
\begin{equation*}
  \sqrt{2\pi}m^{m+\frac{1}{2}}e^{-m}\leqslant m!\leqslant em^{m+\frac{1}{2}}e^{-m},
\end{equation*}
we have the inequality
\[\frac{{i_0+j_0\choose i_0}\cdots{i_n+j_n\choose i_n}}{{D+D'\choose D}}
\geqslant\frac{(2\pi)^\frac{n+3}{2}}{e^{2n+3}}\cdot\frac{(i_0+j_0)^{i_0+j_0+\frac{1}{2}}\cdots(i_n+j_n)^{i_n+j_n+\frac{1}{2}}D^{D+\frac{1}{2}}D'^{D'+\frac{1}{2}}}{i_0^{i_0+\frac{1}{2}}j_0^{j_0+\frac{1}{2}}\cdots i_n^{i_n+\frac{1}{2}}j_n^{j_n+\frac{1}{2}}(D+D')^{D+D'+\frac{1}{2}}}.\]

Let
\[F(i_0,\ldots,i_n,j_0,\ldots,j_n)=\frac{(i_0+j_0)^{i_0+j_0+\frac{1}{2}}\cdots(i_n+j_n)^{i_n+j_n+\frac{1}{2}}}{i_0^{i_0+\frac{1}{2}}j_0^{j_0+\frac{1}{2}}\cdots i_n^{i_n+\frac{1}{2}}j_n^{j_n+\frac{1}{2}}},\]
where $i_0+\cdots+i_n=D$, $j_0+\cdots+j_n=D'$, and $i_0,\ldots,i_n,j_0,\ldots,j_n\geqslant0$. If we consider $F(i_0,\ldots,i_n,j_0,\ldots,j_n)$ as a function of variables $(i_0,\ldots,i_n,j_0,\ldots,j_n)\in\mathbb ]1,+\infty[^{2n+2}$, we can confirm that it take the minimal value when $i_0=\cdots=i_n=\frac{D}{n+1}$, $j_0=\cdots=j_n=\frac{D'}{n+1}$. Then we have
\begin{eqnarray*}
  \frac{F(i_0,\ldots,i_n,j_0,\ldots,j_n)}{\frac{(D+D')^{D+D'+\frac{1}{2}}}{D^{D+\frac{1}{2}}D'^{D'+\frac{1}{2}}}}&\geqslant&\frac{\left(\frac{D+D'}{n+1}\right)^{D+D'+\frac{n+1}{2}}D^{D+\frac{1}{2}}D'^{D'+\frac{1}{2}}}{(D+D')^{D+D'+\frac{1}{2}}\left(\frac{D}{n+1}\right)^{D+\frac{n+1}{2}}\left(\frac{D'}{n+1}\right)^{D+\frac{n+1}{2}}}\\
&=&\frac{(n+1)^{\frac{n+1}{2}}(D+D')^{\frac{n}{2}}}{D^{\frac{n}{2}}D'^{\frac{n}{2}}}\\
&\geqslant&\frac{(n+1)^{\frac{n+1}{2}}}{D'^{\frac{n}{2}}},
\end{eqnarray*}
which terminates the proof.
\end{proof}
\begin{lemm}\label{product}
Let $K$ be a number field, and $\O_K$ be the ring of integers of $K$. For the Hermitian vector bundle $\overline{\sE}=\left(\O_K^{\oplus(n+1)}, (\|\ndot\|_v)_{v\in M_K}\right)$ equipped with the $\ell^2$-norms defined in \eqref{l^2-norm}, let $f\in H^0\left(\mathbb{P}(\mathcal{E}_K),\O_{\mathbb P(\mathcal E_K)}(D)\right)$ and $g\in H^0\left(\mathbb{P}(\mathcal{E}_K),\O_{\mathbb P(\mathcal E_K)}(D')\right)$. Then for every place $v\in M_{K,\infty}$, we have
  \begin{eqnarray*}
    & &\log\|f\|_{v,\mathrm{sym}}+\log\|g\|_{v,\mathrm{sym}}-\frac{1}{2}\log G(D',n)\\
&\leqslant&\log\|f\cdot g\|_{v,\mathrm{sym}}\leqslant\log\|f\|_{v,\mathrm{sym}}+\log\|g\|_{v,\mathrm{sym}},
  \end{eqnarray*}
 where the constant $G(D',n)$ is defined in the equality \eqref{constant G}. 

If $v\in M_{K,f}$, we have
\[\|f\cdot g\|_v=\|f\|_v\cdot\|g\|_v.\]
\end{lemm}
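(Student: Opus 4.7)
At a non-archimedean place $v\in M_{K,f}$, the symmetric norm coincides with the $v$-adic Gauss norm on the standard monomial $\O_K$-basis of $\sym^D(\sE)$, and the equality $\|fg\|_v=\|f\|_v\|g\|_v$ is Gauss's lemma applied to the polynomial ring $K_v[X_0,\ldots,X_n]$.

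At an archimedean place $v\in M_{K,\infty}$, I plan to expand $f=\sum_I a_I X^I$ and $g=\sum_J b_J X^J$ in the monomial basis. The symmetric norms $\|f\|_{v,\mathrm{sym}}^2$, $\|g\|_{v,\mathrm{sym}}^2$ and $\|fg\|_{v,\mathrm{sym}}^2$ are then weighted $\ell^2$-sums whose weights are multinomial coefficients of the shape $\binom{D}{I}$, $\binom{D'}{J}$ and $\binom{D+D'}{K}$; the coefficient of $X^K$ in $fg$ is a weighted convolution of the $a_I$'s and $b_J$'s indexed by $\{(I,J):I+J=K\}$. For the upper bound I apply the Cauchy--Schwarz inequality $K$-by-$K$ with splitting weights $\binom{D}{I}\binom{D'}{J}$; the Chu--Vandermonde identity $\sum_{I+J=K}\binom{D}{I}\binom{D'}{J}=\binom{D+D'}{K}$ collapses the auxiliary sum, and summing over $K$ yields $\|fg\|_{v,\mathrm{sym}}\leq\|f\|_{v,\mathrm{sym}}\|g\|_{v,\mathrm{sym}}$. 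Conceptually, this encodes the fact that the multiplication map $\sym^D(\sE_v)\otimes\sym^{D'}(\sE_v)\to\sym^{D+D'}(\sE_v)$ is a co-isometry of Hermitian vector spaces.

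The lower bound is the main obstacle. Its essential input is Lemma \ref{constant of Jia Jiao}, which provides the uniform estimate $\binom{D}{I}\binom{D'}{J}/\binom{D+D'}{K}\geq G(D',n)^{-1}$ for every admissible $(I,J)$ with $I+J=K$; this is precisely what allows a comparison of the weights occurring in $\|f\|_v^2\|g\|_v^2$ with those occurring in $\|fg\|_v^2$ at a cost uniform in $D$. The delicate point is that the coefficients $c_K=\sum_{I+J=K} a_I b_J$ are convolutions and may exhibit genuine cancellations, so a purely pointwise application of Lemma \ref{constant of Jia Jiao} does not suffice. I expect to circumvent this by expanding $|c_K|^2=\sum_{(I,J),(I',J')}a_Ib_J\overline{a_{I'}b_{J'}}$, treating the diagonal $(I,J)=(I',J')$ directly via Lemma \ref{constant of Jia Jiao}, where the bound $\binom{D+D'}{K}^{-1}\geq G(D',n)^{-1}\bigl(\binom{D}{I}\binom{D'}{J}\bigr)^{-1}$ immediately recovers a contribution of size at least $\|f\|_v^2\|g\|_v^2/G(D',n)$, and then controlling the off-diagonal cross terms using the manifest non-negativity of $\|fg\|_{v,\mathrm{sym}}^2$ together with the rank-one tensor structure of the family $(a_Ib_J)$.
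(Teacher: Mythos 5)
Your non-archimedean case (Gauss's lemma) and your archimedean upper bound (Cauchy--Schwarz paired with Chu--Vandermonde to collapse the auxiliary sum) are both correct; the upper bound argument is in fact more substantive than the paper's, which simply invokes ``properties of Banach algebra'' without further justification.

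The lower bound is where the proposal stalls, and you have correctly located the obstruction: the coefficients of $fg$ are convolutions $c_K=\sum_{I+J=K}a_Ib_J$, and the pointwise weight comparison supplied by Lemma \ref{constant of Jia Jiao} says nothing about the cancellations inside $c_K$. But your proposed remedy is not an argument. The off-diagonal cross terms in $|c_K|^2$ are exactly where the cancellation lives; they can be large and negative, and non-negativity of $\|fg\|^2_{v,\mathrm{sym}}$ only bounds them below by minus the diagonal, which is useless. A true lower bound of this flavor is essentially the Bombieri--Beauzamy--Enflo--Montgomery inequality for the Bombieri norm, whose proof requires a genuine positivity input (an integral or Fischer-pairing representation), not weight bookkeeping plus an appeal to ``rank-one structure.''

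It is worth flagging that the paper's own proof of this step passes over exactly the gap you identified. After handling monomials, it writes $\|fg\|^2_{v,\mathrm{sym}}=\sum_{i\in I,j\in J}|a_ib_j|^2\|x_iy_j\|^2_{v,\mathrm{sym}}$, treating $\{x_iy_j\}_{(i,j)\in I\times J}$ as an orthogonal family; but that family has $r(n,D)\,r(n,D')$ members in a space of dimension $r(n,D+D')$, so for $n,D,D'\geqslant1$ it is linearly dependent and the Parseval-type identity fails because the coefficients of $fg$ are convolutions. A concrete check: take $n=1$, $D=2$, $D'=1$, $f=(e_0-e_1)^2=e_0^2-2e_0e_1+e_1^2$, $g=e_0+e_1$. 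Then $\|f\|^2_{v,\mathrm{sym}}=4$, $\|g\|^2_{v,\mathrm{sym}}=2$, $fg=e_0^3-e_0^2e_1-e_0e_1^2+e_1^3$, so $\|fg\|^2_{v,\mathrm{sym}}=8/3$, while the paper's displayed right-hand side evaluates to $16/3$ and the claimed lower bound $\|f\|^2\|g\|^2/G(1,1)=4$ exceeds $8/3$. So the obstruction you diagnosed is real, and neither your plan nor the paper's proof addresses it; before trying to repair the lower bound you should re-examine whether the stated constant is achievable at all.
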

\begin{proof}
  First, we consider the case of $v\in M_{K,\infty}$. By the properties of Banach algebra, we have
  \[\log\|f\cdot g\|_{v,\mathrm{sym}}\leqslant\log\|f\|_{v,\mathrm{sym}}+\log\|g\|_{v,\mathrm{sym}}.\]

  For the other side, we suppose that $B_t=\{e_0^{i_0}e_1^{i_1}\cdots e_n^{i_n}|i_0+i_1+\cdots+i_n=t,\; i_0,i_1, \ldots, i_n\in\mathbb N\}$ is a canonical orthogonal basis of $E_t$ equipped with the symmetric norm with respect to the orthonormal basis $e_0,e_1,\ldots,e_n$ of $\sE\otimes_{\O_K,v}\C$, where $t\in\mathbb N_+$. In this proof, we consider the case where $t=D$ and $t=D'$. First we suppose that $f$ and $g$ are two elements in the canonical basis $H^0\left(\mathbb{P}(\mathcal{E}_K),\O_{\mathbb P(\mathcal E_K)}(D)\right)$ and $H^0\left(\mathbb{P}(\mathcal{E}_K),\O_{\mathbb P(\mathcal E_K)}(D')\right)$ respectively defined above. If $f=e_0^{i_0}e_1^{i_1}\cdots e_n^{i_n}$ and $g=e_0^{j_0}e_1^{j_1}\cdots e_n^{j_n}$, then we have
  \begin{equation*}
    f\cdot g=e_0^{i_0+j_0}e_1^{i_1+j_1}\cdots e_n^{i_n+j_n}\in E_{D+D'}.
  \end{equation*}
By \cite[Chap. V, \S3.3]{Bourbaki81}, we obtain
\[\|f\cdot g\|_{v,\mathrm{sym}}=\sqrt{\frac{(i_0+j_0)!\cdots(i_n+j_n)!}{(D+D')!}}\]
and
\[\|f\|_{v,\mathrm{sym}}\cdot\|g\|_{v,\mathrm{sym}}=\sqrt{\frac{i_0!\cdots i_n!}{D!}}\cdot\sqrt{\frac{j_0!\cdots j_n!}{D'!}}.\]
By Lemma \ref{constant of Jia Jiao}, we obtain
  \begin{equation*}
    \frac{\|f\cdot g\|_{v,\mathrm{sym}}}{\|f\|_{v,\mathrm{sym}}\cdot\|g\|_{v,\mathrm{sym}}}\geqslant\sqrt{G(D',n)^{-1}},
  \end{equation*}
which means
\begin{equation*}
  \log\|f\cdot g\|_{v,\mathrm{sym}}\geqslant\log\|f\|_{v,\mathrm{sym}}+\log\|g\|_{v,\mathrm{sym}}-\frac{1}{2}\log G(D',n).
\end{equation*}

For the general case, we consider the set $\{a\cdot b|\;a\in B_D, b\in B_{D'}\}=B_{D+D'}$, which is an orthogonal basis of $E_{D+D'}$ equipped with the symmetric norm.
We denote by $B_D=\{x_i\}_{i\in I}$ et $B_{D'}=\{y_j\}_{i\in J}$ for simplicity, where $I=\{(i_0,\ldots,i_n)|\;i_0+\cdots+i_n=D\}$ and $J=\{(i_0,\ldots,i_n)|\;i_0+\cdots+i_n=D'\}$ are the index sets. Then the set $\{x_i y_j\}_{i\in I, j\in J}=B_{D+D'}$ form an orthogonal basis of $E_{D+D'}$. If $f=ax_i$ and $g=by_j$ for some $i\in I$ and $j\in J$, where $a,b\in K$, we obtain
\begin{eqnarray*}
  & &\log\|f\cdot g\|_{v,\mathrm{sym}}\\
  &=&\frac{1}{2}\log\left(|a|_v^2|b|_v^2\langle x_iy_j, x_iy_j\rangle_v\right)\\
  &\geqslant&\frac{1}{2}\Big(\log(|a|_v^2)+\log(|b|_v^2)+\log\langle x_i,x_i\rangle_v+\log\langle y_j,y_j\rangle_v-\log G(D',n)\Big)\\
  &=&\log\|f\|_{v,\mathrm{sym}}+\log\|g\|_{v,\mathrm{sym}}-\frac{1}{2}\log G(D',n).
\end{eqnarray*}

If $f=\sum\limits_{i\in I} a_ix_i$ and $g=\sum\limits_{j\in J} b_jy_j$, where $a_i,b_j\in K$, $x_i\in B_{D}$, and $y_i\in B_{D'}$ are chosen as above, we obtain
\begin{eqnarray*}
  \log\|f\cdot g\|_{v,\mathrm{sym}}&=&\frac{1}{2}\log\left(\sum_{i\in I,j\in J}\langle a_ib_jx_iy_j, a_ib_jx_iy_j\rangle_v\right)\\
  &\geqslant&\frac{1}{2}\log\left(\sum\limits_{i\in I,j\in J}\frac{\langle a_ix_i,a_ix_i\rangle_v\cdot\langle b_jy_j,b_jy_j\rangle_v}{G(D',n)}\right)\\
  &=&\frac{1}{2}\log\left(\sum\limits_{i\in I}\langle a_ix_i,a_ix_i\rangle_v\cdot\sum\limits_{j\in J}\langle b_jy_j,b_jy_j\rangle_v\right)-\frac{1}{2}\log G(D',n)\\
  &=&\log\|f\|_{v,\mathrm{sym}}+\log\|g\|_{v,\mathrm{sym}}-\frac{1}{2}\log G(D',n),
\end{eqnarray*}
and we prove the assertion for $v\in M_{K,\infty}$.

For the case of $v\in M_{K,f}$, it is showed by the definition of the discrete valuation directly.
\end{proof}

Let $\overline{\sE}=\overline{\O}_K^{\oplus(n+1)}$ be the Hermitian vector bundle equipped with $\ell^2$-norms defined in \eqref{l^2-norm}, and $X$ be the hypersurface of $\mathbb P^n_K$ defined by the homogeneous polynomial $f(T_0,\ldots,T_n)$ of degree $\delta$. Let $s$ be the non-zero global section in $H^0\left(\mathbb P^n_{\O_K},\O_{\mathbb P^n_{\O_K}}\left(\delta\right)\right)$ which defines the Zariski closure of $X$ in $\mathbb P^n_{\O_K}$. Then we have the following short exact sequence of $\O_K$-modules:
\begin{equation}\label{exact sequence of F_D}
\begin{CD}
0@>>>E_{D-\delta}@>\cdot s>>E_D@>>>F_D@>>>0,
\end{CD}\end{equation}
where $E_{D-\delta}$ and $E_D$ are defined in \eqref{definition of E_D}, and $F_D$ is the saturated image of the map \eqref{evaluation map}. The third arrow in \eqref{exact sequence of F_D} is the canonical quotient morphism.

In order to estimate the arithmetic Hilbert-Samuel function of the hypersurface $X$, we have the following result.
\begin{theo}\label{upper and lower bound of arithmetic Hilbert}
 Let $X$ be a hypersurface of degree $\delta$ of $\mathbb P^n_K$, and $D$ be a positive integer. Let the constant $R_0(n,D)$ be as in Proposition \ref{symmetric norm vs John norm, constant}, the constant $G(\delta,n)$ be as in \eqref{constant G}, the constant $C(n,D)$ be as in \eqref{constant C}, the vector bundle $\F_D$ over $\spec\O_K$ be as in Definition \ref{arithmetic hilbert function}, $r_1(n,D)=\rg(F_D)$, and $\wmu(\overline I_X)$ be as in Definition \ref{definition of wmu(I_X)}. Then the inequalities
\begin{eqnarray*}
\wmu(\F_D)&\geqslant&\frac{1}{2r_1(n,D)}\Big(C(n,D)-C(n,D-\delta)-2r(n,D-\delta)\wmu(\overline{I}_X)\\
& &-r(n,D-\delta)\log G(\delta,n)\Big)-R_0(n,D),
\end{eqnarray*}
and
\begin{eqnarray*}
  \wmu(\F_D)&\leqslant&\frac{1}{2r_1(n,D)}\Big(C(n,D)-C(n,D-\delta)-2r(n,D-\delta)\wmu(\overline{I}_X)\Big)\\
  & &-R_0(n,D).
\end{eqnarray*}
are uniformly verified for any $D\geqslant\delta$. If $D\leqslant\delta-1$, we have
\[\wmu(\F_D)=\frac{1}{2r(n,D)}C(n,D)-R_0(n,D).\]
\end{theo}
\begin{proof}
  First, we consider the case where $D\geqslant\delta$. We suppose that $\adeg'_n(\E_{D-\delta})$ is the normalized Arakelov degree of $E_{D-\delta}$ equipped with the norms as a Hermitian sub-bundle of the Hermitian vector bundle $\E_{D}$ defined via the exact sequence \eqref{exact sequence of F_D}. Then we obtain
  \begin{equation}\label{additive of F_D}
    \adeg_n(\F_D)=\adeg_n(\E_D)-\adeg'_n(\E_{D-\delta})
  \end{equation}
by the equality \eqref{Po}.

We need to compare the norms over $E_{D-\delta}$ as a Hermitian sub-vector bundle of $\E_{D}$ with the norms of John over $E_{D-\delta}$ defined above. Let $\{e_1,\ldots,e_N\}$ be a orthogonal basis of $E_{D-\delta}$ under the symmetric norms, where we note $N=r(n,D-\delta)$ for simplicity. By definition, we obtain
\begin{equation*}
  \adeg'_n(\E_{D-\delta})=-\sum\limits_{v\in M_K}\frac{[K_v:\Q_v]}{[K:\Q]}\log\|(fe_1)\wedge(fe_2)\wedge\cdots\wedge(fe_N)\|_{v,J},
\end{equation*}
where $\|\ndot\|_{v,J}$ is the norm of John over $\E_D$ induced by the supremum norm at the place $v\in M_{K,\infty}$.

By the equality \eqref{symmetric norm vs John norm} and Proposition \ref{symmetric norm vs John norm, constant}, we obtain
\begin{eqnarray}\label{symetric norm of F_D}
\adeg'_n(\E_{D-\delta})&=&-\sum\limits_{v\in M_K}\frac{[K_v:\Q_v]}{[K:\Q]}\log\|(fe_1)\wedge(fe_2)\wedge\cdots\wedge(fe_N)\|_{v,\mathrm{sym}}\\
& &-r(n,D-\delta)R_0(n,D)\nonumber
\end{eqnarray}
and
\begin{equation}\label{symetric norm of E_D}
  \adeg_n(\E_D)=\frac{1}{2}C(n,D)-r(n,D)R_0(n,D),
\end{equation}
where $R_0(n,D)$ is defined in Proposition \ref{symmetric norm vs John norm, constant}.

We will estimate the term $\|(fe_1)\wedge(fe_2)\wedge\cdots\wedge(fe_N)\|_{v,\mathrm{sym}}$, where $v\in M_{K,\infty}$. Since $\{fe_1,fe_2,\ldots,fe_N\}$ is a base of $E_D$. Let $\langle\ndot,\ndot\rangle_v$ be the scalar product induced by the norm $\|\ndot\|_{v,\mathrm{sym}}$ over $E_D$ for any $v\in M_{K,\infty}$. Then by Lemma \ref{product}, we obtain the inequality
\begin{eqnarray}\label{upper bound of infinite place of F_D}
& &\log\|(fe_1)\wedge(fe_2)\wedge\cdots\wedge(fe_N)\|_{v,\mathrm{sym}}\\
&\leqslant&\log(\|f\|_{v,\mathrm{sym}}^N\cdot\|e_1\wedge\cdots\wedge e_N\|_{v,\mathrm{sym}})\nonumber\\
&=&N\log\|f\|_{v,\mathrm{sym}}+\log\|e_1\wedge\cdots\wedge e_N\|_{v,\mathrm{sym}},\nonumber
\end{eqnarray}
and the inequality
\begin{eqnarray}\label{lower bound of infinite place of F_D}
& &\log\|(fe_1)\wedge(fe_2)\wedge\cdots\wedge(fe_N)\|_{v,\mathrm{sym}}\\
&=&\frac{1}{2}\log\det\left(\langle fe_i,fe_j\rangle_v\right)_{1\leqslant i,j\leqslant N}\nonumber\\
&\geqslant&\frac{1}{2}\log\left(\det\left(\langle e_i,e_j\rangle_v\right)_{1\leqslant i,j\leqslant N}\cdot\left(\frac{\langle f,f\rangle_v}{G(\delta,n)}\right)^N\right)\nonumber\\
&=&\frac{1}{2}\log\det\left(\langle e_i,e_j\rangle_v\right)_{1\leqslant i,j\leqslant N}+\frac{1}{2}\log\langle f,f\rangle_v^N-\frac{1}{2}\log G(\delta,n)^{N}\nonumber\\
&=&\log\|e_1\wedge\cdots\wedge e_N\|_{v,\mathrm{sym}} +N\log\|f\|_{v,\mathrm{sym}}-\frac{N}{2}\log G(\delta,n).\nonumber
\end{eqnarray}

If $v\in M_{K,f}$, we have
\begin{eqnarray}\label{finite place of F_D}
\log\|(fe_1)\wedge(fe_2)\wedge\cdots\wedge(fe_N)\|_{v}&=&\log(\|f\|_{v}^N\cdot\|e_1\wedge\cdots\wedge e_N\|_{v})\\
&=&N\log\|f\|_{v}+\log\|e_1\wedge\cdots\wedge e_N\|_{v}\nonumber
\end{eqnarray}
by the definition of discrete valuation. So by \eqref{symetric norm of F_D}, \eqref{lower bound of infinite place of F_D}, and \eqref{finite place of F_D}, we have
\begin{eqnarray*}
  \adeg'_n(\E_{D-\delta})&\leqslant&\frac{1}{2} C(n,D-\delta)+r(n,D-\delta)\wmu(\overline{I}_X)+\frac{r(n,D-\delta)}{2}\log G(\delta,n)\\
& &-r(n,D-\delta)R_0(n,D),
\end{eqnarray*}
where the slope $\wmu(\overline I_X)$ is obtained from \eqref{explicit I_X when X is hypersurface} for the case of hypersurfaces. Then by \eqref{additive of F_D} and \eqref{symetric norm of E_D}, we obtain
\begin{eqnarray*}
\wmu(\F_D)&\geqslant&\frac{1}{r_1(n,D)}\Biggr(\frac{1}{2}C(n,D)-R_0(n,D)\Big(r(n,D)-r(n,D-\delta)\Big)\\
& &-\frac{1}{2}C(n,D-\delta)-r(n,D-\delta)\wmu(\overline{I}_X)-\frac{r(n,D-\delta)}{2}\log G(\delta,n)\Biggr)\\
&=&\frac{1}{2r_1(n,D)}\Big(C(n,D)-C(n,D-\delta)-2r(n,D-\delta)\wmu(\overline{I}_X)\\
& &-r(n,D-\delta)\log G(\delta,n)\Big)-R_0(n,D).
\end{eqnarray*}

By the similar argument, we obtain
\begin{eqnarray*}
  \wmu(\F_D)&\leqslant&\frac{1}{2r_1(n,D)}\Big(C(n,D)-C(n,D-\delta)-2r(n,D-\delta)\wmu(\overline{I}_X)\Big)\\
  & &-R_0(n,D)
\end{eqnarray*}
by \eqref{additive of F_D}, \eqref{symetric norm of F_D}, \eqref{symetric norm of E_D}, \eqref{upper bound of infinite place of F_D}, and \eqref{finite place of F_D}.

If $D\leqslant\delta-1$, then $\E_D\cong \F_D$ as Hermitian vector bundles over $\spec\O_K$, so we accomplish the proof.
\end{proof}

\subsection{Numerical estimate of the arithmetic Hilbert-Samuel function}
Let $X$ be a hypersurface of $\mathbb P^n_K$. In Theorem \ref{upper and lower bound of arithmetic Hilbert}, we have already give an upper bound and a lower bound of its arithmetic Hilbert-Samuel function (see Definition \ref{arithmetic hilbert function}) of $X$, which are both uniform and explicit. In this part, we will give a numerical estimate of the lower bound of the arithmetic Hilbert-Samuel function of $X$, which will be useful in a counting rational points problem below.

\begin{prop}\label{numerical result of arithmetic hilbert of hypersurface}
  Let $\wmu(\overline{I}_X)$ and $\F_D$ be same as in Theorem \ref{upper and lower bound of arithmetic Hilbert}. Let the constants $A_4(n,D)$ and $A_4'(n,D)$ be same as in Theorem \ref{estimate of C(n,D)}, the constant $G(n,\delta)$ defined in the equality \eqref{constant G}, the constant
  \[r(n,D)={n+D\choose n},\]
  and the constant
  \[\mathcal H_n=1+\frac{1}{2}+\cdots+\frac{1}{n}.\]
  We suppose
  \begin{eqnarray*}
  & &B_0(n,\delta)\\
  &=&-\frac{\log G(\delta,n)}{\frac{r(n,\delta+1)}{n+1}-1}-\frac{\log(n+1)}{2}+\frac{1-\mathcal H_{n+1}}{2^{n-1}n!}(n+1) -\frac{n-2}{2^n(n-1)!}\\
& &+\frac{1}{2^{n-1}(n-1)!(\delta+1)}\Biggr(\left(-\frac{1}{6}n^3-\frac{3}{4}n^2-\frac{13}{12}n+2\right)\mathcal H_n\\
& &\:+\frac{1}{4}n^3+\frac{17}{24}n^2+\left(\frac{119}{72}-\frac{1}{2}\log\left(2\pi\right)\right)n-4+\log\left(2\pi\right)\Biggr)\\
& &+\inf_{D\geqslant\delta}\frac{A_4(n,D)-A'_4(n,D-\delta)}{2^{n-1}\delta D^{n}}.
\end{eqnarray*}
  Then the inequalities
  \[\frac{\wmu(\F_D)}{D}\geqslant-\frac{\wmu(\overline I_X)}{n\delta}+B_0(n,\delta)\]
  is uniformly verified for any $D\geqslant\delta$.
\end{prop}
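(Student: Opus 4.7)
The plan is to start from the explicit lower bound of Theorem~\ref{upper and lower bound of arithmetic Hilbert},
\[
\wmu(\F_D)\geqslant\frac{C(n,D)-C(n,D-\delta)-2r(n,D-\delta)\wmu(\overline I_X)-r(n,D-\delta)\log G(\delta,n)}{2r_1(n,D)}-R_0(n,D),
\]
valid for every $D\geqslant\delta$, and, after dividing by $D$, to identify the coefficient of $\wmu(\overline I_X)$ with (an upper bound for) $\frac{1}{n\delta}$ and to absorb every remaining $D$-dependent quantity into the explicit constant $B_0(n,\delta)$.

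The crucial combinatorial step is to control the coefficient $-\frac{r(n,D-\delta)}{r_1(n,D)\,D}$ of $\wmu(\overline I_X)$. Using the telescoping identity
\[
r_1(n,D)=\binom{n+D}{n}-\binom{n+D-\delta}{n}=\sum_{k=0}^{\delta-1}\binom{n+D-1-k}{n-1}\geqslant\delta\binom{n+D-\delta}{n-1},
\]
where the last inequality comes from bounding each summand by the smallest, I get
\[
\frac{r(n,D-\delta)}{r_1(n,D)\,D}\leqslant\frac{\binom{n+D-\delta}{n}}{\delta\binom{n+D-\delta}{n-1}\,D}=\frac{D-\delta+1}{n\delta D}\leqslant\frac{1}{n\delta}.
\]
Hence $-\frac{r(n,D-\delta)}{r_1(n,D)\,D}\wmu(\overline I_X)\geqslant-\frac{1}{n\delta}\wmu(\overline I_X)$, which produces precisely the coefficient of $\wmu(\overline I_X)$ claimed in the proposition.

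To control the remaining terms, I would insert the explicit expansion of $C(n,D)$ proved in Theorem~\ref{estimate of C(n,D)} of the appendix (a polynomial in $D$ plus an $D^n\log D$ correction plus an explicit remainder $A_4(n,D)$), form the telescoped quantity $C(n,D)-C(n,D-\delta)$, and divide by $2r_1(n,D)\,D$. Using the uniform lower bound $r_1(n,D)\,D\geqslant\delta D^n/(2^{n-1}(n-1)!)$ for $D\geqslant\delta$ (obtained from the same combinatorial inequality as above), every polynomial and logarithmic coefficient of the expansion is turned into one of the explicit summands of $B_0(n,\delta)$. The penalty term $\frac{r(n,D-\delta)\log G(\delta,n)}{2r_1(n,D)\,D}$ is then handled by maximising the ratio over $D\geqslant\delta$, the extremum at $D=\delta+1$ producing the contribution $-\frac{\log G(\delta,n)}{r(n,\delta+1)/(n+1)-1}$. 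Finally, $R_0(n,D)/D$ is controlled via $R_0(n,D)\leqslant\frac{1}{2}\log r(n,D)$ from Proposition~\ref{symmetric norm vs John norm, constant} and the bound $r(n,D)\leqslant(n+1)^D$, which yields the summand $-\frac{\log(n+1)}{2}$.

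Taking the infimum over $D\geqslant\delta$ of the residual remainder $\frac{A_4(n,D)-A'_4(n,D-\delta)}{2^{n-1}\delta D^n}$ then gives the last summand of $B_0(n,\delta)$. The main obstacle is the bookkeeping: several of the intermediate quantities are not monotone in $D$, so one must verify that after combination each is uniformly dominated by a constant depending only on $n$ and $\delta$. The most delicate point is that the discrepancy $\frac{1}{n\delta}-\frac{r(n,D-\delta)}{r_1(n,D)\,D}$ decays only at rate $O(1/D)$, so its interaction with the remainders of the $C(n,D)$-expansion (which themselves decay at comparable rate) must be tracked carefully; this is precisely the role of the infimum appearing in the definition of $B_0(n,\delta)$.
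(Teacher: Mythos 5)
Your telescoping identity for $r_1(n,D)$ and the ensuing upper bound
\[
\frac{r(n,D-\delta)}{D\,r_1(n,D)}\leqslant\frac{D-\delta+1}{n\delta D}\leqslant\frac{1}{n\delta}
\]
are correct, and cleaner than the chain displayed in the paper's proof (whose last two links have their directions reversed). But the next step, the deduction
\[
-\frac{r(n,D-\delta)}{D\,r_1(n,D)}\wmu(\overline I_X)\geqslant-\frac{\wmu(\overline I_X)}{n\delta},
\]
follows from your bound only when $\wmu(\overline I_X)\geqslant0$, whereas the paper's proof explicitly records $\wmu(\overline I_X)<0$ --- and in the intended application the interesting regime is $-\wmu(\overline I_X)\approx h(X)\gg 0$. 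With $\wmu(\overline I_X)<0$, multiplying your inequality through by the positive number $-\wmu(\overline I_X)$ yields
\[
-\frac{r(n,D-\delta)}{D\,r_1(n,D)}\wmu(\overline I_X)\leqslant-\frac{\wmu(\overline I_X)}{n\delta},
\]
which is the \emph{wrong} direction for a lower bound. The paper's proof instead invokes the opposite ratio inequality $\frac{r(n,D-\delta)}{D\,r_1(n,D)}\geqslant\frac{1}{n\delta}$, which your own computation shows is false once $\delta\geqslant2$ (for example $n=\delta=2$, $D=4$ gives $\frac{1}{6}<\frac{1}{4}$). So neither route actually closes as written.

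This gap is substantive rather than cosmetic: the defect $\frac{1}{n\delta}-\frac{r(n,D-\delta)}{D\,r_1(n,D)}$ is of order $1/D$, and at the admissible endpoint $D=\delta$ the ratio equals $\frac{1}{\delta(r(n,\delta)-1)}$, far below $\frac{1}{n\delta}$. Multiplying that defect by $-\wmu(\overline I_X)$ produces a term that grows with the height of $X$ and so cannot be absorbed into a constant $B_0(n,\delta)$ depending only on $n$ and $\delta$. To rescue the statement one would need to replace $\frac{1}{n\delta}$ by a genuinely uniform, smaller coefficient, restrict the allowed range of $D$, or identify a compensating positive contribution among the remaining terms; none of these is addressed in your sketch. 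The rest of what you outline --- inserting the expansion of $C(n,D)$, controlling the $\log G(\delta,n)$ penalty, and bounding $R_0(n,D)/D$ --- does follow the paper's strategy and is reasonable modulo this central issue.
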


\begin{proof}
By Theorem \ref{upper and lower bound of arithmetic Hilbert}, we have
\begin{eqnarray*}
\wmu(\F_D)&\geqslant&\frac{1}{2r_1(n,D)}\Big(C(n,D)-C(n,D-\delta)-2r(n,D-\delta)\wmu(\overline{I}_X)\\
& &-r(n,D-\delta)\log G(\delta,n)\Big)-R_0(n,D),
\end{eqnarray*}
where the inequality
\[0\leqslant R_0(n,D)\leqslant\log\sqrt{r(n,D)}\leqslant\frac{D}{2}\log(n+1).\]
is verified . Then we have
\[\frac{R_0(n,D)}{D}\leqslant\frac{\log(n+1)}{2}\]
when $D\geqslant\delta$.

 First, we have the inequality $\wmu(\overline I_X)<0$ by definition directly. In addition, since
\[\frac{r(n,\delta+1)}{n+1}\leqslant\frac{r(n,D)}{r(n,D-\delta)}\leqslant\frac{D^n}{(D-\delta)^n}\leqslant 1-\frac{n\delta}{D},\]
then we have
  \[\frac{1}{\frac{r(n,\delta+1)}{n+1}-1}\geqslant\frac{r(n,D-\delta)}{D\left(r(n,D)-r(n,D-\delta)\right)}\geqslant\frac{1}{n\delta}.\]

We have
\[r_1(n,D)=r(n,D)-r(n,D-\delta)={n+D\choose D}-{n+D-\delta\choose D-\delta}\]
from the short exact sequence \eqref{exact sequence of F_D} and the definition of $r(n,D)$. Then we have
 \begin{eqnarray*}
   r_1(n,D)&=&\sum_{j=0}^{\delta-1}{D-\delta+n+j\choose n-1}\\
   &\leqslant&\delta{D+n-1\choose n-1}\\
   &\leqslant&\delta(D+1)^{n-1}\\
   &\leqslant&2^{n-1}\delta D^{n-1}.
 \end{eqnarray*}
  By Theorem \ref{estimate of C(n,D)}, we obtain
\begin{eqnarray*}
  & &C(n,D)-C(n,D-\delta)\\
  &\geqslant&\frac{1-\mathcal H_{n+1}}{n!}(n+1)\delta D^{n}-\frac{n-2}{2n!}n\delta D^{n-1}\log D\\
& &+\frac{1}{n!}\Biggr(\left(-\frac{1}{6}n^3-\frac{3}{4}n^2-\frac{13}{12}n+2\right)\mathcal H_n\\
& &\:+\frac{1}{4}n^3+\frac{17}{24}n^2+\left(\frac{119}{72}-\frac{1}{2}\log\left(2\pi\right)\right)n-4+\log\left(2\pi\right)\Biggr)n\delta D^{n-1}\\
& &+A_4(n,D)-A'_4(n,D-\delta).
\end{eqnarray*}
Then we have
\begin{eqnarray*}
  & &\frac{C(n,D)-C(n,D-\delta)}{2Dr_1(n,D)}\\
  &\geqslant&\frac{1-\mathcal H_{n+1}}{2^{n-1}n!}(n+1) -\frac{n-2}{2^n(n-1)!}\frac{\log D}{D}\\
& &+\frac{1}{2^{n-1}(n-1)!D}\Biggr(\left(-\frac{1}{6}n^3-\frac{3}{4}n^2-\frac{13}{12}n+2\right)\mathcal H_n\\
& &\:+\frac{1}{4}n^3+\frac{17}{24}n^2+\left(\frac{119}{72}-\frac{1}{2}\log\left(2\pi\right)\right)n-4+\log\left(2\pi\right)\Biggr)\\
& &+\frac{A_4(n,D)-A'_4(n,D-\delta)}{2^{n-1}\delta D^{n}}.
\end{eqnarray*}
By the construction of $A_4(n,D)$ and $A'_4(n,D)$ in Theorem \ref{estimate of C(n,D)}, the term
\[\frac{A_4(n,D)-A'_4(n,D-\delta)}{2^{n-1}\delta D^{n}}\]
is uniformly bounded considered as a function of the variable $D$, where $D\geqslant\delta$.

Since $X$ is a hypersurface of degree $\delta$, the constant
\begin{eqnarray*}
  & &B_0(n,\delta)\\
  &=&-\frac{\log G(\delta,n)}{\frac{r(n,\delta+1)}{n+1}-1}-\frac{\log(n+1)}{2}+\frac{1-\mathcal H_{n+1}}{2^{n-1}n!}(n+1) -\frac{n-2}{2^n(n-1)!}\\
& &+\frac{1}{2^{n-1}(n-1)!(\delta+1)}\Biggr(\left(-\frac{1}{6}n^3-\frac{3}{4}n^2-\frac{13}{12}n+2\right)\mathcal H_n\\
& &\:+\frac{1}{4}n^3+\frac{17}{24}n^2+\left(\frac{119}{72}-\frac{1}{2}\log\left(2\pi\right)\right)n-4+\log\left(2\pi\right)\Biggr)\\
& &+\inf_{D\geqslant\delta}\frac{A_4(n,D)-A'_4(n,D-\delta)}{2^{n-1}\delta D^{n}}
\end{eqnarray*}
satisfies the inequality in the assertion.
\end{proof}
\begin{rema}
  With all the notations and conditions in Proposition \ref{numerical result of arithmetic hilbert of hypersurface}, by this proposition and the inequality \eqref{trivial estimate of F_D}, there exists a positive constant $C(X)$ depending on the hypersurface $X$, such that
  \[\frac{\wmu(\F_D)}{D}\geqslant-C(X)\]
  for all $D\in\mathbb N\smallsetminus\{0\}$ uniformly. The case of $D\in\{1,2,\ldots,\delta\}$ is obtained by the isomorphism $\E_D\cong\F_D$.

   Let $\mathscr X$ be a projective scheme, $\mathscr L$ be a Hermitian ample line bundle, and $\G_D=H^0(\mathscr X,\mathscr L|_{\mathscr X}^{\otimes D})$ be a Hermitian vector bundle equipped with the induced norms. By \cite[Lemma 4.8]{Bost2001}, there exists a constant $c_1>0$ which only depends on $\mathscr X$ and $\mathscr L$ such that for any $D\in \mathbb N\smallsetminus\{0\}$, we have
  \[\wmu(\G_D)\geqslant-c_1D.\]
  Then the result of Proposition \ref{numerical result of arithmetic hilbert of hypersurface} can be considered as an example of \cite[Lemma 4.8]{Bost2001} when $\mathscr X$ is a hypersurface and $\mathscr L$ is the universal bundle, for we have $F_D\cong H^0(X,\O_X(D))$ when $X$ is a projective hypersurface.
\end{rema}
\begin{rema}
  By \cite[Proposition 3.6]{Chen1}, we can compare $\wmu(\overline{I}_X)$ and the height $h_{\overline{\mathscr L}}(\mathscr X)$ of $X$ defined by the arithmetic intersection theory. Then Theorem \ref{upper and lower bound of arithmetic Hilbert} covers the estimate of lower bound in \cite{David_Philippon99} and \cite[Theorem 4.8]{Chen1}, for the constants in the above estimate are better.
\end{rema}
\section{An application: the density of rational points with small heights}
We have given an uniform explicit estimate of the arithmetic Hilbert-Samuel function of a projective hypersurface in \S\ref{uniform bound of arithmetic Hilbert}. As an application, we first suppose that $X$ is an integral hypersurface of degree $\delta$ in $\mathbb P^n_K$, and we will construct a hypersurface of degree at most $\delta$ which covers all the rational points of small heights of $X$ but do not contain the generic point of $X$. This kind of results is useful in the application of the determinant method to in counting rational points problems, see \cite[Theorem 4]{Heath-Brown}, \cite[Lemma 3]{Browning_Heath05}, \cite[Lemma 6.3]{Salberger07}, \cite[Lemma 1.7]{Salberger_preprint2013}, and the remark under the statement of \cite[Theomrem 1.3]{Walsh_2015}, for example.
\subsection{Heights of rational points}
Let $K$ be a number field, and $\O_K$ be its ring of integers. In order to describe the arithmetic complexity of the closed points in $\mathbb P^n_K$, we introduce the following height function.
\subsubsection*{Naive height function}
First, we recall the following common definition of height function (cf. \cite[\S B.2]{Hindry}).
\begin{defi}\label{weil height}
Let $\xi\in\mathbb{P}^n_K(K)$. We write a $K$-rational homogeneous coordinate of $\xi$ as $[x_0:\cdots:x_n]$. We define the \textit{absolute logarithmic height} of the point $\xi$ as
\[h(\xi)=\frac{1}{[K:\Q]}\sum_{v\in M_{K}}\log\left(\max_{0\leqslant i\leqslant n}\{|x_i|_v^{[K_v:\Q_v]}\}\right),\]
which is independent of the choice of the projective coordinate by the product formula (cf. \cite[Chap. III, Proposition 1.3]{Neukirch}).
\end{defi}
We can prove that $h(\xi)$ is independent of the choice of the base field $K$ (cf. \cite[Lemma B.2.1]{Hindry}).

In addition, we define the \textit{relative multiplicative height} of the point $\xi$ to be
\[H_{K}(\xi) = \exp\left([K:\Q]h(\xi)\right).\]

When considering the closed points of a sub-scheme $X$ of $\mathbb P^n_K$ with the immersion $\phi:X\hookrightarrow\mathbb P^n_K$, we define the height of $\xi\in X(\overline K)$ to be
\[h(\xi):=h(\phi(\xi)).\]
We shall use this notation when there is no confusion of the immersion morphism $\phi$.

By the Northcott's property (cf. \cite[Theorem B.2.3]{Hindry}), the cardinality $\#\{\xi\in X(K)|\;H_K(\xi)\leqslant B\})$ is finite for every $B\geqslant1$.
\subsubsection*{Reformulated by Arakelov geometry}
The definition of height of a rational points can be defined by the language of Arakelov geometry by the following method.
 \begin{defi}\label{arakelov height}
   Let $\overline{\sE}$ be a Hermitian vector bundle over $\spec\O_K$ of dimension $n+1$, $\pi:\mathbb P(\sE)\rightarrow \spec\O_K$ be the structural morphism, and $\xi$ be a rational point of $\mathbb P(\sE_K)$. The point $\xi$ extends in a unique way to a section $\mathcal P_\xi$ of $\pi$. Let $\overline{\mathcal L}$ be a Hermitian line bundle of $\mathbb P(\overline{\sE})$. The \textit{Arakelov height} of point $\xi$ with respect to $\overline{\mathcal L}$ is defined to be $\adeg_n(\mathcal P_\xi^*\overline{\mathcal L})$, denoted by $h_{\overline{\mathcal L}}(\xi)$.
   \end{defi}
 If we take $\overline{\mathcal L}=\overline{\O(1)}$ and $\overline{\sE}=\left(\O_K^{\oplus(n+1)},\left(\|\ndot\|_v\right)_{v\in M_{K,\infty}}\right)$ equipped with the $\ell^2$-norm defined in \eqref{l^2-norm}, and let $[x_0:\cdots:x_n]$ be a $K$-rational projective coordinate of $\xi$, then we have (cf. \cite[(3.1.6)]{BGS94} or \cite[Proposition 9.10]{Moriwaki-book})
\begin{eqnarray}
  h_{\overline{\O(1)}}(\xi)&=&\sum\limits_{\p\in \spm \O_K}\frac{[K_\p:\Q_\p]}{[K:\Q]}\log \left(\max\limits_{1\leqslant i\leqslant n}|x_i|_\p\right)\\
  & &\;+\frac{1}{2}\sum\limits_{\sigma\in M_{K,\infty}}\frac{[K_\sigma:\Q_\sigma]}{[K:\Q]}\log\left(\sum\limits_{j=0}^n|x_j|_\sigma^2\right),\nonumber
\end{eqnarray}
which is independent of the choice of the projective coordinate by the product formula. By definition, we have
\begin{equation}\label{difference between classical height and arakelov height}
  h(\xi)\leqslant h_{\overline{\O(1)}}(\xi)\leqslant h(\xi)+\frac{1}{2}\log(n+1),
\end{equation}
where the height $h(\xi)$ is defined in Definition \ref{weil height}. So Arakelov height defined in Definition \ref{arakelov height} also satisfies the Northcott property.

In order to use the method of Arakelov geometry, the Arakelov height defined in Definition \ref{arakelov height} is useful in this approach.

Let $B \geqslant1$, and $X$ be the subscheme of $\mathbb P^n_K$ defined. Suppose
\[H_{\overline{\O(1)},K}=\exp\left([K:\Q]h_{\overline{\O(1)}}(\xi)\right).\]
 We denote by
\begin{equation}\label{S(X;B)}
S(X;B)=\left\{\xi\in X(K)|\;H_{\overline{\O(1)},K}(\xi)\leqslant B\right\}.
\end{equation}
In addition, we denote by
\begin{equation}\label{N(X;B)}
N(X;B)=\#S(X;B),
\end{equation}
which is also finite for every fixed $B\geqslant1$ from \eqref{difference between classical height and arakelov height} and the Northcott property introduced above.
\subsection{A comparison of heights of a hypersurface}
In the problem of counting rational points with bounded heights, the following definition of height of a hypersurface is usually useful, see \cite{Heath-Brown}, \cite[Notation 1.6]{Salberger_preprint2013} or \cite[\S1]{Walsh_2015} for instance.
\begin{defi}[Classical height]\label{classical height of hypersurface}
Let
  \[f(T_0,T_1,\ldots,T_n)=\sum_{\begin{subarray}{x}(i_0,\ldots,i_n)\in\mathbb N^{n+1}\\ i_0+\cdots+i_n=\delta\end{subarray}}a_{i_0,i_1,\ldots,i_n}T_0^{i_0}T_1^{i_1}\cdots T_n^{i_n}\]
  be a non-zero homogeneous polynomial with coefficients in $K$.
 The \textit{classical height} $h(f)$ of the polynomial $f$ is defined below:
\begin{equation*}
  h(f)=\sum_{v\in M_K}\frac{[K_v:\Q_v]}{[K:\Q]}\log\max\limits_{\begin{subarray}{x}(i_0,\ldots,i_n)\in\mathbb N^{n+1}\\ i_0+\cdots+i_n=\delta\end{subarray}}\{|a_{i_0,\ldots, i_n}|_v\}.
\end{equation*}
In addition, if $X$ is the hypersurface in $\mathbb P^n_K$ defined by $f$, we define $h(X)=h(f)$ as the \textit{classical height} of the hypersurface $X$. We denote by $H_K(X)=\exp\left([K:\Q]h(X)\right)$.
\end{defi}
The classical height is invariant under the finite extension of number fields (cf. \cite[Lemma B.2.1]{Hindry}).

 We consider the Cayley form of $X$ defined in the Definition \ref{cayley form 2}. By the argument in Remark \ref{X is hypersurface I_X}, the system $I_{X,K}$ defined in Definition \ref{definition of wmu(I_X)} is generated by the polynomial $f(T_0,\ldots,T_n)$, and in fact $\overline{I}_X$ is a Hermitian line bundle over $\spec \O_K$ in this case. Then
\begin{equation}\label{explicit I_X when X is hypersurface}
\wmu(\overline{I}_X)=-\sum_{\p\in\spm\O_K}\frac{[K_v:\Q_v]}{[K:\Q]}\log\|f\|_{\p}-\sum_{v\in M_{K,\infty}}\frac{[K_v:\Q_v]}{[K:\Q]}\log\|f\|_{v,\mathrm{sym}},
\end{equation}
where $\|\ndot\|_{v,\mathrm{sym}}$ is the symmetric norm of the space $\sym^\delta(\sE_K^\vee)$.

For a projective hypersurface $X$, we will compare the two heights ($\wmu(\overline{I}_X)$ is considered as a height of $X$) mentioned above.
\begin{prop}\label{height na\"ive-slope}
   Let $\overline{\sE}=\left(\O_K^{\oplus(n+1)},(\|\ndot\|_v)_{v\in M_{K,\infty}}\right)$ be the Hermitian vector bundle over $\spec\O_K$ equipped with the norms determined in \eqref{l^2-norm}, $X$ be a projective hypersurface of degree $\delta$ of $\mathbb P(\sE_K)$, $h(X)$ be the classical height of $X$ defined in Definition \ref{classical height of hypersurface}, and $\wmu(\overline I_X)$ as in Definition \ref{definition of wmu(I_X)}. Then we have
\[h(X)-\frac{n}{2}\log(\delta+1)\leqslant-\wmu(\overline{I}_X)\leqslant h(X)+\frac{3n}{2}\log(\delta+1).\]
\end{prop}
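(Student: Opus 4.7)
The plan is to start from the explicit formula \eqref{explicit I_X when X is hypersurface}, which is available because $X$ is a hypersurface and hence $\overline{I}_X$ is a Hermitian \emph{line} bundle generated (over $K$) by the defining polynomial $f=\sum_{\vec{i}}a_{\vec{i}}T_0^{i_0}\cdots T_n^{i_n}$. That formula expresses $-\wmu(\overline{I}_X)$ as $\sum_{\p}\frac{[K_\p:\Q_p]}{[K:\Q]}\log\|f\|_\p+\sum_{v|\infty}\frac{[K_v:\Q_v]}{[K:\Q]}\log\|f\|_{v,\mathrm{sym}}$, which I would compare place-by-place to $h(X)=\sum_{v\in M_K}\frac{[K_v:\Q_v]}{[K:\Q]}\log\max_{\vec{i}}|a_{\vec{i}}|_v$.

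First I would dispose of the finite places. Since $\sym^\delta(\sE)$ is a free $\O_K$-module whose canonical basis consists of the monomials $e_0^{i_0}\cdots e_n^{i_n}$, and $f$ has the $a_{\vec{i}}$ as coordinates in this basis, the norm of $f$ at a finite place $\p$ (coming from its role as a generator of the saturated submodule $I_X$) is exactly $\max_{\vec{i}}|a_{\vec{i}}|_\p$. Therefore the finite-place sums in $-\wmu(\overline{I}_X)$ and in $h(X)$ cancel identically, and everything reduces to bounding $\log\|f\|_{v,\mathrm{sym}}-\log\max_{\vec{i}}|a_{\vec{i}}|_v$ for each archimedean $v$, since the total weight of archimedean places is $1$.

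At an archimedean $v$, I would use the explicit formula for the symmetric norm already invoked in the proof of Lemma \ref{product} (from Bourbaki V, §3.3): the monomials $e_0^{i_0}\cdots e_n^{i_n}$ form an orthogonal basis of $\sym^\delta(\sE_{v,\C}^\vee)$ with squared norm $\frac{i_0!\cdots i_n!}{\delta!}$, giving
\[\|f\|_{v,\mathrm{sym}}^2=\sum_{\vec{i}}|a_{\vec{i}}|_v^2\,\frac{i_0!\cdots i_n!}{\delta!}.\]
For the upper bound on $-\wmu(\overline{I}_X)$, every weight $i_0!\cdots i_n!/\delta!$ is at most $1$, so $\|f\|_{v,\mathrm{sym}}\leqslant\sqrt{r(n,\delta)}\max_{\vec{i}}|a_{\vec{i}}|_v$, and the estimate $r(n,\delta)=\binom{n+\delta}{n}=\prod_{k=1}^n(1+\delta/k)\leqslant(\delta+1)^n$ handles that direction cleanly.

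The lower bound is the step I expect to be the real obstacle. The naive move -- retaining only the single term indexed by $\vec{i}^{*}$ achieving $\max_{\vec{i}}|a_{\vec{i}}|_v$ -- gives $\|f\|_{v,\mathrm{sym}}\geqslant|a_{\vec{i}^{*}}|_v\sqrt{i_0^{*}!\cdots i_n^{*}!/\delta!}$, but the weight at a ``central'' multi-index is of size roughly $(n+1)^{-\delta}$, far too small to yield a polynomial-in-$(\delta+1)$ constant. To get the stated bound one has to use the symmetric norm more globally. The approach I would try is to exploit that $\|\cdot\|_{v,\mathrm{sym}}$ is, up to a scalar depending only on $(n,\delta)$, the unique unitarily invariant Hermitian norm on $\sym^\delta(\sE_{v,\C}^\vee)$, and to compare it to a geometric proxy (say the $L^\infty$ or $L^2$ norm on the unit sphere of $\sE_{v,\C}$, which is also unitarily invariant). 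Bounding that geometric norm from below by $\max_{\vec{i}}|a_{\vec{i}}|_v$ via a polynomial evaluation argument should then yield an estimate of the form $\log\|f\|_{v,\mathrm{sym}}\geqslant\log\max_{\vec{i}}|a_{\vec{i}}|_v-C(n,\delta)$ with $C(n,\delta)$ of the order $\log(\delta+1)^n$. Summing the two archimedean estimates against $\sum_{v|\infty}[K_v:\Q_v]/[K:\Q]=1$ then produces the two-sided inequality of the proposition.
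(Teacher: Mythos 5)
Your handling of the finite places and your upper-bound direction are both correct, and your upper bound is in fact \emph{sharper} than the paper's: from the explicit formula $\|f\|_{v,\mathrm{sym}}^2=\sum_{\vec{i}}|a_{\vec{i}}|_v^2\,\frac{i_0!\cdots i_n!}{\delta!}$ and $\frac{i_0!\cdots i_n!}{\delta!}\leqslant 1$ you get $\log\|f\|_{v,\mathrm{sym}}\leqslant\log\max_{\vec{i}}|a_{\vec{i}}|_v+\tfrac{1}{2}\log r(n,\delta)$, hence $-\wmu(\overline{I}_X)\leqslant h(X)+\tfrac{n}{2}\log(\delta+1)$. The paper instead routes through $\|\ndot\|_{v,\sup}$: it bounds $\|f\|_{v,\sup}\leqslant r(n,\delta)\max_{\vec{i}}|a_{\vec{i}}|_v$ by the triangle inequality and then uses the John-norm comparison to pass from $\sup$ to $\mathrm{sym}$, paying another $\tfrac{1}{2}\log r(n,\delta)$ and landing at the weaker constant $\tfrac{3n}{2}$. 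Your direct computation avoids the detour and improves this.

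The lower bound, which you yourself call ``the real obstacle,'' is where the gap lies, and it is a genuine one: the step ``should then yield an estimate'' is not carried out, and the sketch as written contains a dead end. Of the two proxies you float, the $L^2$-norm on the sphere cannot help, because it is itself a unitarily invariant Hermitian norm and is therefore \emph{proportional} to $\|\ndot\|_{v,\mathrm{sym}}$ (with ratio $\sqrt{r(n,\delta)}$); lower-bounding it by $\max_{\vec{i}}|a_{\vec{i}}|_v$ is exactly the same problem, and retaining a single term in the explicit quadratic form still gives a weight $\prod i_j!/\delta!$ that can be as small as about $(n+1)^{-\delta}$, i.e.\ exponentially bad in $\delta$. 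The proxy that works is the $L^\infty$ (supremum) norm, but that one is \emph{not} Hermitian, so it is not proportional to $\|\ndot\|_{v,\mathrm{sym}}$; one only has a two-sided estimate with ratio $\sqrt{r(n,\delta)}$, and this has to be justified. Concretely, two ingredients are missing from your sketch and are precisely what the paper supplies: first, the John-ellipsoid comparison $\|\ndot\|_{v,\sup}\leqslant\|\ndot\|_{v,J}\leqslant\sqrt{r(n,\delta)}\,\|\ndot\|_{v,\sup}$ combined with $\log\|\ndot\|_{v,J}=\log\|\ndot\|_{v,\mathrm{sym}}+R_0(n,\delta)$ and $0\leqslant R_0(n,\delta)\leqslant\tfrac{1}{2}\log r(n,\delta)$ (Propositions~\ref{slope of different norms} and~\ref{symmetric norm vs John norm, constant}), giving $\log\|f\|_{v,\mathrm{sym}}\geqslant\log\|f\|_{v,\sup}-\tfrac{1}{2}\log r(n,\delta)$; and second, the Cauchy integral formula applied on the torus $|z_0|=\cdots=|z_n|=1$ to extract each coefficient, which yields $|a_{\alpha_0,\ldots,\alpha_n}|_v\leqslant\sup_{|x|\leqslant 1}|f(x)|_v=\|f\|_{v,\sup}$. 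Without the second step in particular, the ``polynomial evaluation argument'' is only a name for the missing lemma, and the lower bound is unproved.
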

\begin{proof}
 Let $v\in M_{K,\infty}$, and $s\in H^0\left(\mathbb P(\mathcal E_K),\O_{\mathbb P(\sE_K)}(\delta)\right)$ be the non-zero global section which defines the hypersurface $X$. By the equality \eqref{symmetric norm vs John norm} and Proposition \ref{symmetric norm vs John norm, constant}, we have
\begin{equation*}
  \log\|s\|_{v,J}=\log\|s\|_{v,\mathrm{sym}}+R_0(n,\delta),
\end{equation*}
where the constant $R_0(n,\delta)$ satisfies
\begin{equation*}
  0\leqslant R_0(n,\delta)\leqslant\sqrt{r(n,\delta)},
\end{equation*}
where $r(n,\delta)={n+\delta\choose n}$. In addition, we have the inequality
\begin{equation*}
  \|s\|_{v,\sup}\leqslant\|s\|_{v,J}\leqslant\sqrt{r(n,\delta)}\|s\|_{v,\sup}
\end{equation*}
by the equality \eqref{john norm}, where $\|\ndot\|_{v,J}$ is the norm of John induced from $\|\ndot\|_{v,\sup}$ defined in \eqref{definition of sup norm}. So we have the inequality
\begin{equation*}
  \log\|s\|_{v,\sup}-\frac{1}{2}\log r(n,\delta)\leqslant\log\|s\|_{v,\mathrm{sym}}\leqslant\log\|s\|_{v,\sup}+\frac{1}{2}\log r(n,\delta)
\end{equation*}
by Proposition \ref{slope of different norms}.

By definition, the norm $\|s\|_{v,\sup}=\sup\limits_{x\in\mathbb{P}(\mathcal E_{K,v})(\C)}\|s(x)\|_{v,\mathrm{FS}}$ corresponds to the Fubini-Study norm over $\mathbb{P}(\mathcal E_K)$ at the place $v\in M_{K,\infty}$, which is equal to $\frac{|s(x)|_v}{|x|_{v}^\delta}$, where $|\ndot|_{v}$ is the norm induced by the Hermitian norm over $\overline{\mathcal E}$. The value $\|s(x)\|_{v,\mathrm{FS}}$ does not depend on the choice of the projective coordinate of the point $x$.

In order to obtain an upper bound of $-\wmu(\overline{I}_X)$, we suppose that the hypersurface $X$ is defined by the non-zero homogeneous equation
    \[f(T_0,T_1,\ldots,T_n)=\sum_{\begin{subarray}{x}(i_0,\ldots,i_n)\in\mathbb N^{n+1}\\ i_0+\cdots+i_n=\delta\end{subarray}}a_{i_0,i_1,\ldots,i_n}T_0^{i_0}T_1^{i_1}\cdots T_n^{i_n}.\]
    Then for any place $v\in M_{K,\infty}$, we obtain
\begin{equation*}
  \sup\limits_{x\in \mathbb{P}(\mathcal E_K)(\C_v)}\frac{|v(f)(x)|_v}{|v(x)|_v^\delta}\leqslant{n+\delta\choose \delta}\max\limits_{\begin{subarray}{x}(i_0,\ldots,i_n)\in\mathbb N^{n+1}\\ i_0+\cdots+i_n=\delta\end{subarray}}\{|a_{i_0,\ldots,i_n}|_v\},
\end{equation*}
for there are at most ${n+\delta\choose \delta}$ non-zero terms in the equation $f(T_0,\ldots,T_n)=0$. Then we obtain
\begin{equation*}
  -\wmu(\overline{I}_X)\leqslant h(X)+\frac{3}{2}\log{n+\delta\choose \delta}\leqslant h(X)+\frac{3}{2}n\log(\delta+1),
\end{equation*}
where we use the trivial estimate ${n+\delta\choose \delta}\leqslant(\delta+1)^n$ at the last inequality above.

   Next, we will find a lower bound $-\wmu(\overline{I}_X)$. For every place $v\in M_{K,\infty}$, let $a_{\alpha_0,\ldots,\alpha_n}$ be one of the coefficients of $f(T_0,\ldots,T_n)$ such that $|a_{\alpha_0,\ldots,\alpha_n}|_v=\max\limits_{i_0+\cdots+i_n=\delta}\{|a_{i_0,\ldots, i_n}|_v\}$. By the integration formula of Cauchy, we have
\begin{equation*}
  \frac{1}{(2\pi i)^{n+1}}\int_{|z_0|_v=\cdots=|z_0|_v=1}f(z_0,\ldots,z_n)z_0^{-\alpha_0-1}\cdots z_n^{-\alpha_n-1}dz_0\cdots dz_n=a_{\alpha_0,\ldots,\alpha_n}.
\end{equation*}
So we obtain
\begin{eqnarray*}
  & &|a_{\alpha_0,\ldots,\alpha_n}|_v\\
  &=&\left|\frac{1}{(2\pi i)^{n+1}}\int_{|z_0|_v=\cdots=|z_0|_v=1}f(z_0,\ldots,z_n)z_0^{-\alpha_0-1}\cdots z_n^{-\alpha_n-1}dz_0\cdots dz_n\right|_v\\
  &=&\left|\int_{[0,1]^{n+1}}f(e^{2\pi it_0},\ldots,e^{2\pi it_n})e^{-2\pi it_0\alpha_0}\cdots e^{-2\pi it_n\alpha_n}dt_0\cdots dt_n\right|_v\\
  &\leqslant&\int_{[0,1]^{n+1}}\left|f(e^{2\pi it_0},\ldots,e^{2\pi it_n})\right|_vdt_0\cdots dt_n\\
  &\leqslant&\sup_{\begin{subarray}{c}x\in\C^{n+1}\\|x|\leqslant1\end{subarray}}\left|f(x)\right|_v\\
  &=&\sup\limits_{x\in\mathbb{P}(\mathcal E_K)(\C_v)}\frac{\left|f(x)\right|_v}{\left|x\right|^\delta_v}.
\end{eqnarray*}
Then we have
\begin{equation*}
  \log\|f\|_{v,\mathrm{sym}}\geqslant\max\limits_{i_0+\cdots+i_n=\delta}\{|a_{i_0,\ldots, i_n}|_v\}-\frac{1}{2}\log r(n,\delta)
\end{equation*}
for every place $v\in M_{K,\infty}$. Then we obtain
\begin{equation*}
  -\wmu(\overline{I}_X)\geqslant h(X)-\frac{1}{2}\log r(n,\delta)\geqslant h(X)-\frac{n}{2}\log(\delta+1),
\end{equation*}
which terminates the proof.
\end{proof}
\subsection{Counting rational points with small heights}
We keep all the notations and definitions in \S \ref{basic setting}. Let $X$ be a closed sub-scheme of $\mathbb P(\sE_K)$. and $Z=\left\{P_i\right\}_{i\in I}$ be a family of rational points of $X$. The evaluation map
\begin{equation*}
\eta_{Z,D}:\;E_{D,K}=H^0\left(\mathbb{P}(\mathcal{E}_K),\O(D)\right)\rightarrow \bigoplus_{i\in I}P_i^*\O(D)
\end{equation*}
can be factorized through $\eta_{X,D}$ defined in \eqref{evaluation map}. We denote by
\begin{equation}
  \phi_{Z,D}:\;F_{D,K}\rightarrow\bigoplus_{i\in I}P_i^*\O(D)
\end{equation}
the homomorphism such that $\phi_{Z,D}\circ\eta_{X,D}=\eta_{Z,D}$.

We have the following result.
\begin{prop}[\cite{Chen1}, Propoosition 2.12]\label{evaluation map of points}
  With all the notations above. If $X$ is integral, and we have the inequality
  \[\sup_{i\in I}h_{\overline{\O(1)}}(P_i)<\frac{\wmu_{\max}(\F_D)}{D}-\frac{1}{2D}\log r_1(n,D),\]
  where $r_1(n,D)=\rg(F_{D})$. Then the homomorphism $\phi_{Z,D}$ cannot be injective.
\end{prop}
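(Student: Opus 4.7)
\emph{Proof proposal.} The plan is to argue by contradiction via Bost's slope inequality. Suppose $\phi_{Z,D}$ is injective. Let $\F'\subseteq\F_D$ be a saturated Hermitian sub-vector-bundle realizing the maximal slope, so $\wmu(\F')=\wmu_{\max}(\F_D)$, and set $r'=\rg(\F')\leqslant r_1(n,D)$. Since the restriction $\phi_{Z,D}|_{\F'_K}$ is still injective, elementary linear algebra furnishes a subset $I'\subseteq I$ of cardinality $r'$ such that the composition
\[
\psi\colon\F'\xrightarrow{\phi_{Z,D}|_{\F'}}\bigoplus_{i\in I}P_i^{*}\overline{\O(D)}\xrightarrow{\mathrm{pr}_{I'}}\bigoplus_{i\in I'}P_i^{*}\overline{\O(D)}
\]
becomes an isomorphism on generic fibres. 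Equip the target with the orthogonal direct-sum Hermitian structure, whose summands $P_i^{*}\overline{\O(D)}$ are Hermitian line bundles of slope $D\cdot h_{\overline{\O(1)}}(P_i)$.

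Next I apply the standard slope inequality (of the kind used throughout \cite{Chen1,Chen2}) to the generically injective $\psi$:
\[
\wmu(\F')\leqslant\wmu_{\max}\Bigl(\bigoplus_{i\in I'}P_i^{*}\overline{\O(D)}\Bigr)+h(\psi),\qquad h(\psi)=\sum_{v\in M_K}\frac{[K_v:\Q_v]}{[K:\Q]}\log\|\psi\|_v.
\]
Evaluation along the integral sections $\mathcal P_i\colon\spec\O_K\to\P(\sE)$ is defined over $\O_K$, hence $\|\psi\|_v\leqslant 1$ at every finite $v$ and the finite-place contributions to $h(\psi)$ are non-positive. The max slope of an orthogonal direct sum of Hermitian line bundles is the largest of their slopes, so it is bounded above by $D\sup_{i\in I}h_{\overline{\O(1)}}(P_i)$.

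The heart of the argument is the archimedean operator-norm estimate. For $v\in M_{K,\infty}$ and $\bar s\in F_{D,v}$ with quotient-John norm $\|\bar s\|_v\leqslant 1$, lift $\bar s$ to $s\in E_{D,v}$ with $\|s\|_{v,J}$ arbitrarily close to $\|\bar s\|_v$; combining the definition of the sup norm \eqref{definition of sup norm} with the John/sup comparison \eqref{john norm} gives
\[
\|\bar s(P_i)\|_{v,\mathrm{FS}}=\|s(P_i)\|_{v,\mathrm{FS}}\leqslant\|s\|_{v,\sup}\leqslant\|s\|_{v,J},
\]
from which
\[
\|\psi(\bar s)\|_v^2=\sum_{i\in I'}\|\bar s(P_i)\|_{v,\mathrm{FS}}^2\leqslant|I'|\,\|\bar s\|_v^2=r'\,\|\bar s\|_v^2\leqslant r_1(n,D)\,\|\bar s\|_v^2.
\]
Thus $\|\psi\|_v\leqslant\sqrt{r_1(n,D)}$ at every archimedean place, whence $h(\psi)\leqslant\tfrac{1}{2}\log r_1(n,D)$ after summing $\tfrac{[K_v:\Q_v]}{[K:\Q]}$ over $M_{K,\infty}$. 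Substituting,
\[
\wmu_{\max}(\F_D)=\wmu(\F')\leqslant D\sup_{i\in I}h_{\overline{\O(1)}}(P_i)+\tfrac{1}{2}\log r_1(n,D),
\]
which, after dividing by $D$, contradicts the hypothesis.

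The main obstacle will be making the archimedean comparison fully rigorous: one must justify the chain ``quotient John $\geqslant$ sup $\geqslant$ pointwise Fubini--Study'' while keeping in mind that the norm on $\F_D$ is the quotient of the John norm on $\E_D$, not an intrinsic John norm on $F_D$. Once this is set, the factor $\sqrt{r_1(n,D)}$ appears simply because exactly $r'\leqslant r_1(n,D)$ summands are retained to make $\psi$ an isomorphism, and the $\ell^2$-sum over those $r'$ fibres pays only a $\sqrt{r_1(n,D)}$ penalty relative to the source norm.
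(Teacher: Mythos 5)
Your proof is correct and follows exactly the approach the paper indicates, namely Bost's slope inequality from \cite[Appendix A]{BostBour96} applied to a projection onto a minimal set of evaluation points; the paper itself simply cites \cite[Proposition 2.12]{Chen1} and does not reproduce a proof. One step worth making explicit: the slope inequality you invoke is really $\wmu_{\min}(\F')\leqslant\wmu_{\max}\bigl(\bigoplus_{i\in I'}P_i^{*}\overline{\O(D)}\bigr)+h(\psi)$, and this coincides with the $\wmu(\F')$ you wrote because any saturated sub-bundle attaining $\wmu_{\max}(\F_D)$ is automatically semistable (any of its sub-bundles is also a sub-bundle of $\F_D$, hence has slope at most $\wmu_{\max}(\F_D)=\wmu(\F')$), so that $\wmu_{\min}(\F')=\wmu(\F')$.
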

The main tools to prove the above proposition is the slope inequalities, see \cite[Appendix A]{BostBour96}.

We combine \cite[Proposition 2.12]{Chen1} and the lower bound of $\wmu(\F_D)$ in Proposition \ref{numerical result of arithmetic hilbert of hypersurface} of hypersurfaces, and then we obtain the following result.
\begin{theo}\label{covered by one hypersurface}
  Let $K$ be a number field, and $X$ be an integral hypersurface of $\mathbb P^n_K$ of degree $\delta$. We suppose that $B$ is a positive real number satisfying the inequality
  \[\frac{\log B}{[K:\Q]}<\frac{1}{n\delta}h(X)+B_0(n,\delta)-\frac{1}{2}\log (n+1)-\frac{1}{2\delta}\log(\delta+1),\]
  where $B_0(n,\delta)$ is defined in Proposition \ref{numerical result of arithmetic hilbert of hypersurface}, and $h(X)$ is defined in Definition \ref{classical height of hypersurface}. Then the set $S(X;B)$ can be covered by a hypersurface of degree smaller than or equal to $\delta$ which does not contain the generic point of $X$, where $S(X;B)$ is defined in \eqref{S(X;B)}.
\end{theo}
\begin{proof}
If we have the inequality
\[\frac{\log B}{[K:\Q]}<\frac{1}{n\delta}h(X)+B_0(n,\delta)-\frac{1}{2}\log (n+1)-\frac{1}{2\delta}\log(\delta+1),\]
then by Proposition \ref{height na\"ive-slope}, we have
 \[\frac{\log B}{[K:\Q]}<-\frac{1}{n\delta}\wmu(\overline I_X)+B_0(n,\delta)-\frac{1}{2}\log (n+1),\]
 where $\wmu(\overline I_X)$ is defined in Definition \ref{definition of wmu(I_X)}. In addition, by Proposition \ref{numerical result of arithmetic hilbert of hypersurface}, we have
\[\frac{\log B}{[K:\Q]}<\frac{\wmu(\F_D)}{D}-\frac{1}{2}\log (n+1)\leqslant\frac{\wmu_{\max}(\F_D)}{D}-\frac{1}{2D}\log r_1(n,D)\]
 for every $D\geqslant\delta$. Then by Proposition \ref{evaluation map of points}, we have the assertion.
\end{proof}
\begin{rema}\label{explicit of covered by one hypersurface}
  With all the notations in Theorem \ref{covered by one hypersurface}. If a positive real number $B$ satisfies
  \[H_K(X)\gg_{n,K,\delta}B^{n\delta},\]
  then $S(X;B)$ can be covered by a hypersurface of degree smaller than or equal to $\delta$ which does not contain the generic point of $X$. The above implicit constant depending on $n$, $K$, and $\delta$ is original from Theorem \ref{covered by one hypersurface}, which can be explicitly written down. By \eqref{difference between classical height and arakelov height}, we use no matter the classical absolute logarithmic height defined in Definition \ref{weil height} or the Arakelov height introduced in Definition \ref{arakelov height}, we will always get the above result.
\end{rema}
\appendix
\section{An estimate of the constant $C(n,D)$}
The aim of this appendix it to give an explicit uniform estimate of the constant
 \begin{equation}\label{constant C-2}
C(n,D)=\sum\limits_{\begin{subarray}{c} i_0+\cdots+i_n=D \\ i_0,\ldots,i_n\geqslant0\end{subarray}}\log\left(\frac{i_0!\cdots i_n!}{D!}\right),
\end{equation}
defined in the equality \eqref{constant C}. In fact, we will prove (in Theorem \ref{estimate of C(n,D)})
\begin{eqnarray}\label{estimation of C-2}
C(n,D)&=&\frac{1-\mathcal H_{n+1}}{n!}D^{n+1}-\frac{n-2}{2n!}D^n\log D\\
& &+\frac{1}{n!}\Biggr(\left(-\frac{1}{6}n^3-\frac{3}{4}n^2-\frac{13}{12}n+2\right)\mathcal H_n\nonumber\\
& &\:+\frac{1}{4}n^3+\frac{17}{24}n^2+\left(\frac{119}{72}-\frac{1}{2}\log\left(2\pi\right)\right)n-4+\log\left(2\pi\right)\Biggr)D^n\nonumber\\
& &+o(D^n),\nonumber
\end{eqnarray}
where $\mathcal H_n=1+\frac{1}{2}+\cdots+\frac{1}{n}$. In addition, we will give both a uniform lower and upper bounds of the reminder explicitly. The only preliminary knowledge for this section is the single variable calculus.

In the rest of this section, we note $r(n,D)={n+D\choose n}$, and $C(n,D)$ same as in the equality \eqref{constant C-2}.
\subsection{Preliminaries}
In this part, we will give some preliminary calculation for the estimate of $C(n,D)$.
\begin{lemm}\label{3.3.3}
We have
\[r(n,D)=\sum_{m=0}^Dr(n-1,m).\]
\end{lemm}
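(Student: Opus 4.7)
The identity to establish is
\[
\binom{n+D}{n}=\sum_{m=0}^{D}\binom{n-1+m}{n-1},
\]
which is the classical hockey-stick identity for binomial coefficients. The plan is to proceed by induction on $D$, with Pascal's rule as the only ingredient.

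For the base case $D=0$, both sides equal $1$: the left is $r(n,0)=\binom{n}{n}=1$, and the right reduces to the single term $r(n-1,0)=\binom{n-1}{n-1}=1$. For the inductive step, assume the identity holds for $D-1$, so that $r(n,D-1)=\sum_{m=0}^{D-1}r(n-1,m)$. Then
\[
\sum_{m=0}^{D}r(n-1,m)=r(n-1,D)+\sum_{m=0}^{D-1}r(n-1,m)=\binom{n-1+D}{n-1}+\binom{n+D-1}{n},
\]
and Pascal's rule $\binom{n+D}{n}=\binom{n+D-1}{n-1}+\binom{n+D-1}{n}$ gives exactly $r(n,D)$ on the right.

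There is essentially no obstacle here; the only thing to be careful about is bookkeeping the index shift between $r(n-1,D)=\binom{n-1+D}{n-1}$ and the term $\binom{n+D-1}{n-1}$ appearing in Pascal's rule, which coincide. Alternatively, one could give a combinatorial proof by stratifying the set of multi-indices $(i_{0},\dots,i_{n})\in\mathbb N^{n+1}$ with $i_{0}+\cdots+i_{n}=D$ (which has cardinality $r(n,D)$) according to the value $m=i_{0}+\cdots+i_{n-1}$, each stratum having cardinality $r(n-1,m)$; but the inductive argument is shorter and suffices.
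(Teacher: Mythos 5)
Your proof is correct and amounts to the same thing the paper does: the paper simply writes the hockey-stick identity $\binom{n+D}{n}=\sum_{m=0}^{D}\binom{n+m-1}{m}$ without justification, while you supply the standard inductive proof of that identity via Pascal's rule. No discrepancy to report.
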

\begin{proof}
In fact, we have
\[r(n,D)={n+D\choose n}=\sum_{m=0}^D{n+m-1\choose m}=\sum_{m=0}^Dr(n-1,m).\]
\end{proof}
\begin{lemm}\label{3.3.4}
We have
\[C(n,D)=\sum_{m=0}^D\left(C(n-1,m)+r(n-1,m)\log{D\choose m}^{-1}\right).\]
\end{lemm}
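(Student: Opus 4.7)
The plan is to decompose the multi-index sum defining $C(n,D)$ by peeling off the last coordinate $i_n$. Writing $i_n = D - m$ with $m \in \{0,1,\ldots,D\}$, the constraint $i_0 + \cdots + i_n = D$ becomes $i_0 + \cdots + i_{n-1} = m$, so that
\[
C(n,D) = \sum_{m=0}^{D}\ \sum_{\substack{i_0+\cdots+i_{n-1}=m \\ i_0,\ldots,i_{n-1}\geqslant 0}} \log\!\left(\frac{i_0!\cdots i_{n-1}!\,(D-m)!}{D!}\right).
\]

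The key algebraic step is to factor the argument of the logarithm as
\[
\frac{i_0!\cdots i_{n-1}!\,(D-m)!}{D!} \;=\; \frac{i_0!\cdots i_{n-1}!}{m!} \cdot \frac{m!\,(D-m)!}{D!} \;=\; \frac{i_0!\cdots i_{n-1}!}{m!} \cdot \binom{D}{m}^{-1},
\]
so that the logarithm splits as a sum of two terms, and crucially the second term $\log\binom{D}{m}^{-1}$ is independent of the inner indices $i_0,\ldots,i_{n-1}$.

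Distributing the sum, the inner sum over multi-indices $(i_0,\ldots,i_{n-1})$ with sum $m$ produces two contributions: the first is exactly $C(n-1,m)$ by the defining formula applied with $n$ replaced by $n-1$ and $D$ replaced by $m$; the second is a constant times the number of such multi-indices, which equals $r(n-1,m)$ by Lemma \ref{3.3.3} (or directly from the definition of $r$). Collecting these pieces yields the claimed identity. There is no real obstacle here; the argument is a short bookkeeping computation, and the only thing to be careful about is matching the indexing conventions of $r$ and $C$ when reducing from $n$ to $n-1$.
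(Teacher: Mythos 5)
Your proof is correct and follows essentially the same strategy as the paper: peel off the last coordinate of the multi-index, factor the argument of the logarithm so that $\binom{D}{m}^{-1}$ emerges, and identify the remaining inner sum with $C(n-1,m)$ and the multiplicity with $r(n-1,m)$. Your choice $i_n = D-m$ (so that the inner constraint reads $i_0+\cdots+i_{n-1}=m$ immediately) keeps the bookkeeping tidier than the paper's presentation, which first writes the inner constraint with $D-m$ and then implicitly re-indexes, but the underlying computation is the same.
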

\begin{proof}
In fact, we have
\begin{eqnarray*}
  C(n,D)&=&\sum_{m=0}^D\left(\sum\limits_{\begin{subarray}{c} i_0+\cdots+i_{n-1}=D-m \\ i_0,\ldots,i_n\geqslant0\end{subarray}}\log\left(\frac{i_0!\cdots i_{n-1}!}{D!}\right)+r(n-1,m)\log m!\right)\\
&=&\sum_{m=0}^D\Biggr(\sum\limits_{\begin{subarray}{c} i_0+\cdots+i_{n-1}=D-m \\ i_0,\ldots,i_n\geqslant0\end{subarray}}\log\left(\frac{i_0!\cdots i_{n-1}!}{(D-m)!}\right)+r(n-1,m)\log\frac{(D-m)!}{D!}\\
& &+r(n-1,m)\log m!\Biggr)\\
&=&\sum_{m=0}^D\left(C(n-1,m)+r(n-1,m)\log{D\choose m}^{-1}\right).
\end{eqnarray*}
\end{proof}
Let
\[Q(n,D)=\sum_{m=0}^Dr(n-1,m)\log{D\choose m},\]
then we have
\begin{equation}\label{C(n,D)->Q(n,D)}
  C(n,D)=\sum_{m=0}^DC(n-1,m)-Q(n,D)
\end{equation}
by Lemma \ref{3.3.3} and Lemma \ref{3.3.4}. By definition, we obtain $C(0,D)\equiv0$. Then in order to estimate $C(n,D)$, we need to consider $Q(n,D)$.
\begin{lemm}
We have
\[Q(n,D)=\sum_{m=2}^D\Big(r(n,m-1)-r(n,D-m)\Big)\log m.\]
\end{lemm}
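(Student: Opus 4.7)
The plan is to rewrite $\log\binom{D}{m}$ as a linear combination of $\log k$'s and then swap the order of summation, using Lemma \ref{3.3.3} to collapse the resulting inner sums.

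Concretely, I would start from the identity
\[
\log\binom{D}{m} \;=\; \sum_{k=1}^{D}\log k \;-\; \sum_{k=1}^{m}\log k \;-\; \sum_{k=1}^{D-m}\log k,
\]
valid for every $0\leqslant m\leqslant D$ (with the convention that an empty sum is $0$). Substituting into the definition
\[
Q(n,D)=\sum_{m=0}^{D}r(n-1,m)\log\binom{D}{m}
\]
and exchanging the two summations, the coefficient of $\log k$ (for fixed $1\leqslant k\leqslant D$) becomes
\[
\sum_{m=0}^{D}r(n-1,m)\;-\;\sum_{m=k}^{D}r(n-1,m)\;-\;\sum_{m=0}^{D-k}r(n-1,m)
\;=\;\sum_{m=0}^{k-1}r(n-1,m)\;-\;\sum_{m=0}^{D-k}r(n-1,m).
\]

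Next, I would invoke Lemma \ref{3.3.3}, which tells us that $\sum_{m=0}^{N}r(n-1,m)=r(n,N)$. Applying this to both sums above yields the coefficient $r(n,k-1)-r(n,D-k)$. Therefore
\[
Q(n,D)=\sum_{k=1}^{D}\bigl(r(n,k-1)-r(n,D-k)\bigr)\log k.
\]
Finally, observe that the $k=1$ contribution is $(r(n,0)-r(n,D-1))\log 1 = 0$, so this term can be dropped, which produces exactly the stated formula after relabelling $k$ as $m$.

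There is no serious obstacle here: the argument is purely combinatorial manipulation of sums, and the only ingredient beyond the definition is the Pascal-type identity of Lemma \ref{3.3.3}. The one small thing to be careful about is bookkeeping the empty-sum conventions at the endpoints $m=0$ and $m=D$ when expanding $\log\binom{D}{m}$, so that the interchange of summations is valid without spurious boundary terms; this is why the $k=1$ term drops out cleanly at the end.
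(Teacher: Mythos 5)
Your proof is correct, but it takes a genuinely different route from the paper's. The paper proceeds by Abel transformation (summation by parts), turning $Q(n,D)=\sum_{m=1}^{D}r(n-1,m)\log\binom{D}{m}$ into $\sum_{m=1}^{D-1}\bigl(r(n,m)-1\bigr)\log\frac{m+1}{D-m}$ via the partial sums $\sum_{k\leqslant m}r(n-1,k)=r(n,m)$, and then splits the logarithm and reindexes the two resulting sums, with a small extra bookkeeping step to absorb the $-1$ term using $\sum_{m=1}^{D-1}\log\frac{m+1}{D-m}=\log D$. You instead expand $\log\binom{D}{m}=\sum_{k\leqslant D}\log k-\sum_{k\leqslant m}\log k-\sum_{k\leqslant D-m}\log k$ and interchange summation, so the coefficient of $\log k$ is read off directly as $\sum_{m=0}^{k-1}r(n-1,m)-\sum_{m=0}^{D-k}r(n-1,m)=r(n,k-1)-r(n,D-k)$ by Lemma \ref{3.3.3}. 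Both proofs invoke Lemma \ref{3.3.3} as the key combinatorial input, and both amount to rearranging a double sum; but your expansion-and-Fubini approach is more symmetric in $m\leftrightarrow D-m$ from the outset and avoids the telescoping/reindexing juggling at the end of the paper's Abel-summation argument, so it is arguably the cleaner of the two. The endpoint bookkeeping you flag (the $m=0$, $m=D$ empty sums and the vanishing $k=1$ term) is indeed the only place where care is needed, and you handle it correctly.
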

\begin{proof}
  By Abel transformation, we obtain,
\begin{eqnarray*}
  Q(n,D)&=&\sum_{m=1}^{D}r(n-1,m)\log{D\choose m}\\
&=&\sum_{m=1}^{D-1}\left(\sum_{k=1}^{m}r(n-1,k)\right)\left(\log{D\choose m}-\log{D\choose m+1}\right)\\
&=&\sum_{m=1}^{D-1}\Big(r(n,m)-1\Big)\log\frac{m+1}{D-m}.
\end{eqnarray*}
In addition, we have the equality
\[\sum_{m=1}^{D-1}r(n,m)\log(m+1)=\sum_{m=2}^Dr(n,m-1)\log m,\]
the equality
\[\sum_{m=1}^{D-1}r(n,m)\log(D-m)=\sum_{m=2}^{D-1}r(n,D-m)\log m,\]
and the equality
\[\sum_{m=1}^{D-1}\log\frac{m+1}{D-m}=\log D=r(n,0)\log D.\]
Then we obtain
\begin{eqnarray*}
  & &\sum_{m=1}^{D-1}\Big(r(n,m)-1\Big)\log\frac{m+1}{D-m}\\
&=&\sum_{m=2}^Dr(n,m-1)\log m-\sum_{m=2}^{D-1}r(n,D-m)\log m-r(n,D-D)\log D\\
&=&\sum_{m=2}^D\Big(r(n,m-1)-r(n,D-m)\Big)\log m,
\end{eqnarray*}
which terminates the proof.
\end{proof}
Let
\begin{equation}\label{def of S(n,D)}
  S(n,D)=\sum_{m=2}^D\Big((m-1)^n-(D-m)^n\Big)\log m.
\end{equation}
By the inequality
\[\frac{D^n}{n!}+\frac{(n+1)D^{n-1}}{2(n-1)!}\leqslant r(n,D)\leqslant\frac{D^n}{n!}+\frac{(n+1)D^{n-1}}{2(n-1)!}+(n-1)D^{n-2},\]
we obtain the following result.
\begin{prop}\label{upper bound and lower bound of Q(n,D)}
  Let $S(n,D)$ as in \eqref{def of S(n,D)}. Then we have
\[Q(n,D)\geqslant\frac{1}{n!}S(n,D)+\frac{n+1}{2(n-1)!}S(n-1,D)-(n-1)^2(D-1)^{n-1}\log D\]
and
\[Q(n,D)\leqslant\frac{1}{n!}S(n,D)+\frac{n+1}{2(n-1)!}S(n-1,D)+(n-1)^2(D-1)^{n-1}\log D.\]
\end{prop}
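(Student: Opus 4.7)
The plan is to take the closed expression
\[Q(n,D)=\sum_{m=2}^D\bigl(r(n,m-1)-r(n,D-m)\bigr)\log m\]
from the preceding lemma and, for each binomial coefficient that appears, insert either the stated lower bound
\[r(n,k)\geqslant\tfrac{k^n}{n!}+\tfrac{(n+1)k^{n-1}}{2(n-1)!}\]
or the stated upper bound
\[r(n,k)\leqslant\tfrac{k^n}{n!}+\tfrac{(n+1)k^{n-1}}{2(n-1)!}+(n-1)k^{n-2}\]
so as to obtain a one-sided inequality in the correct direction. The leading and sub-leading terms will collect into exactly $\frac{1}{n!}S(n,D)+\frac{n+1}{2(n-1)!}S(n-1,D)$ by the very definition of $S(n,\sbullet)$ in \eqref{def of S(n,D)}, and it only remains to control a single remainder sum of the form $\sum_{m=2}^{D}k^{n-2}\log m$.

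Concretely, for the lower bound on $Q(n,D)$ I would lower-bound $r(n,m-1)$ and upper-bound $r(n,D-m)$; subtracting, the $D^{n-2}$ correction appears with a minus sign and produces
\[Q(n,D)\geqslant\tfrac{1}{n!}S(n,D)+\tfrac{n+1}{2(n-1)!}S(n-1,D)-(n-1)\sum_{m=2}^{D}(D-m)^{n-2}\log m.\]
For the upper bound the roles of the two bounds are swapped, and the correction appears with a plus sign, giving
\[Q(n,D)\leqslant\tfrac{1}{n!}S(n,D)+\tfrac{n+1}{2(n-1)!}S(n-1,D)+(n-1)\sum_{m=2}^{D}(m-1)^{n-2}\log m.\]

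The only non-trivial step is then the estimate of the two remainder sums by $(n-1)(D-1)^{n-1}\log D$. I would bound $\log m$ uniformly by $\log D$ and reduce to $\sum_{k=0}^{D-2}k^{n-2}$, which in turn I bound crudely by $(D-1)\cdot(D-1)^{n-2}=(D-1)^{n-1}$ (handling $n=2$ separately, where the sum is just $D-1$). This gives each remainder sum $\leqslant(D-1)^{n-1}\log D$, and multiplying by the prefactor $(n-1)$ yields $(n-1)^2(D-1)^{n-1}\log D$ as required. I do not expect a real obstacle here: the whole argument is substitution followed by a single crude sum estimate, with the factor $(n-1)^2$ in the statement providing plenty of slack.
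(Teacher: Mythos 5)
Your proposal is correct and follows essentially the same route as the paper: substitute the two-sided polynomial bounds on $r(n,\cdot)$ term by term in the closed expression for $Q(n,D)$, recognize the leading two powers as $\frac{1}{n!}S(n,D)+\frac{n+1}{2(n-1)!}S(n-1,D)$, and crudely bound the leftover sum by replacing $\log m$ with $\log D$ and estimating $\sum_k k^{n-2}$. There is a tiny arithmetic slip at the end --- multiplying $(D-1)^{n-1}\log D$ by the prefactor $(n-1)$ gives $(n-1)(D-1)^{n-1}\log D$, not $(n-1)^2(D-1)^{n-1}\log D$ --- but since $n\geqslant 2$ your (sharper) bound is dominated by the stated one, so the conclusion is unaffected.
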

\begin{proof}
  In fact, we have
\begin{eqnarray*}
  Q(n,D)&\geqslant&\sum_{m=2}^D\Biggr(\frac{(m-1)^n}{n!}+\frac{(n+1)(m-1)^{n-1}}{2(n-1)!}-\frac{(D-m)^n}{n!}-\\
& &\frac{(n+1)(D-m)^{n-1}}{2(n-1)!}-(n-1)(D-m)^{n-2}\Biggr)\log m\\
&=&\frac{1}{n!}S(n,D)+\frac{n+1}{2(n-1)!}S(n-1,D)-(n-1)\sum_{m=2}^D(D-m)^{n-2}\log m\\
&\geqslant&\frac{1}{n!}S(n,D)+\frac{n+1}{2(n-1)!}S(n-1,D)-(n-1)^2(D-1)^{n-1}\log D
\end{eqnarray*}
and
\begin{eqnarray*}
  Q(n,D)&\leqslant&\sum_{m=2}^D\Biggr(\frac{(m-1)^n}{n!}+\frac{(n+1)(m-1)^{n-1}}{2(n-1)!}-\frac{(D-m)^n}{n!}-\\
& &\frac{(n+1)(D-m)^{n-1}}{2(n-1)!}+(n-1)(m-1)^{n-2}\Biggr)\log m\\
&=&\frac{1}{n!}S(n,D)+\frac{n+1}{2(n-1)!}S(n-1,D)+(n-1)\sum_{m=2}^D(m-1)^{n-2}\log m\\
&\leqslant&\frac{1}{n!}S(n,D)+\frac{n+1}{2(n-1)!}S(n-1,D)+(n-1)^2(D-1)^{n-1}\log D,
\end{eqnarray*}
which terminates the proof.
\end{proof}
\subsection{Explicit estimate of $S(n,D)$ when $n\geqslant2$}
We fix a real number $\epsilon\in]0,\frac{1}{6}[$. Let
\begin{equation}\label{def of S_1(n,D)}
  S_1(n,D)=\sum_{2\leqslant m\leqslant D^{1/2+\epsilon}}\Big((m-1)^n-(D-m)^n\Big)\log m
\end{equation}
and
\begin{equation}\label{def of S_2(n,D)}
  S_2(n,D)=\sum_{D^{1/2+\epsilon}<m\leqslant D}\Big((m-1)^n-(D-m)^n\Big)\log m,
\end{equation}
then we have
\[S(n,D)=S_1(n,D)+S_2(n,D),\]
where $S(n,D)$ is defined in \eqref{def of S(n,D)}.

For estimating $S(n,D)$, we need an upper bound and a lower bound of $S_1(n,D)$ and $S_2(n,D)$ respectively.

First, we are going to estimate $S_1(n,D)$. In fact, we have
\[0\leqslant\sum_{2\leqslant m\leqslant D^{1/2+\epsilon}}(m-1)^n\log m\leqslant\frac{1}{2}D^{(1/2+\epsilon)(n+1)}\log D.\]
By the choice of $\epsilon$ and $n$, we have $(1/2+\epsilon)(n+1)<n$. In addition, we have $2\leqslant m\leqslant D^{1/2+\epsilon}$, so we have
\[D^n-nD^{n-1}m\leqslant(D-m)^n\leqslant D^n-nD^{n-1}m+\frac{(n-1)2^{n-1}e}{\pi\sqrt{n}}D^{n-2}m^2.\]
Then we obtain
\[\sum_{2\leqslant m\leqslant D^{1/2+\epsilon}}(D-m)^n\log m\geqslant\sum_{2\leqslant m\leqslant D^{1/2+\epsilon}}(D^n-nD^{n-1}m)\log m\]
and
\begin{eqnarray*}
\sum_{2\leqslant m\leqslant D^{1/2+\epsilon}}(D-m)^n\log m&\leqslant&\sum_{2\leqslant m\leqslant D^{1/2+\epsilon}}\Big(D^n-nD^{n-1}m\Big)\log m+D^{1/2+\epsilon}\\
& & +\frac{(n-1)2^{n-1}e}{\pi\sqrt{n}}D^{n-1/2+3\epsilon}\log D,
\end{eqnarray*}
where we have $n-1/2+3\epsilon<n$ by the choice of $\epsilon$.

By the above argument, we obtain:
\begin{prop}\label{3.3.7}
Let $S_1(n,D)$ be as in \eqref{def of S_1(n,D)}. We have
\[S_1(n,D)=D^{n}\left(\sum_{2\leqslant m\leqslant D^{1/2+\epsilon}}\log m\right)-nD^{n-1}\left(\sum_{2\leqslant m\leqslant D^{1/2+\epsilon}}m\log m\right)+o(D^n).\]
In addition, we have
\[S_1(n,D)\geqslant D^{n}\left(\sum_{2\leqslant m\leqslant D^{1/2+\epsilon}}\log m\right)-nD^{n-1}\left(\sum_{2\leqslant m\leqslant D^{1/2+\epsilon}}m\log m\right)\]
and
\begin{eqnarray*}
  S_1(n,D)&\leqslant& D^{n}\left(\sum_{2\leqslant m\leqslant D^{1/2+\epsilon}}\log m\right)-nD^{n-1}\left(\sum_{2\leqslant m\leqslant D^{1/2+\epsilon}}m\log m\right)\\
  & &+\frac{1}{2}D^{(1/2+\epsilon)(n+1)}\log D+D^{1/2+\epsilon}+\frac{(n-1)2^{n-1}e}{\pi\sqrt{n}}D^{n-1/2+3\epsilon}\log D.
\end{eqnarray*}
\end{prop}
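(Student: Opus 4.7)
The plan is to exploit that the summation range $2\leqslant m\leqslant D^{1/2+\epsilon}$ keeps $m$ very small compared to $D$, so that $(m-1)^n-(D-m)^n$ splits into a negligible piece coming from $(m-1)^n$ and a dominant piece coming from $(D-m)^n$, the latter being amenable to a Taylor expansion in $m/D$.

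First I would bound the $(m-1)^n$ contribution crudely. Using $(m-1)^n\leqslant m^n\leqslant D^{(1/2+\epsilon)n}$ together with $\log m\leqslant (1/2+\epsilon)\log D$ and the at most $D^{1/2+\epsilon}$ summands, one obtains
\[0\leqslant\sum_{2\leqslant m\leqslant D^{1/2+\epsilon}}(m-1)^n\log m\leqslant\frac{1}{2}D^{(1/2+\epsilon)(n+1)}\log D.\]
The assumption $\epsilon\in]0,1/6[$ together with $n\geqslant 2$ gives $(1/2+\epsilon)(n+1)<n$, so this block is $o(D^n)$ and is absorbed into the explicit error written in the stated upper bound, while it is simply dropped (since it is nonnegative) on the lower side.

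For the $(D-m)^n$ piece I would Taylor-expand in $m/D\in[0,D^{-1/2+\epsilon}]$. Bernoulli's inequality applied to $(1-m/D)^n$ gives the pointwise lower bound $(D-m)^n\geqslant D^n-nD^{n-1}m$, whereas Taylor's theorem with remainder yields $(D-m)^n\leqslant D^n-nD^{n-1}m+{n\choose 2}D^{n-2}m^2$ throughout the range; replacing ${n\choose 2}$ by the coarser but uniform bound $\frac{(n-1)2^{n-1}e}{\pi\sqrt{n}}$ (via a Stirling-type estimate for binomial coefficients) produces the exact form of the constant that appears in the proposition. Multiplying by $\log m$ and summing, the quadratic remainder is controlled by
\[\frac{(n-1)2^{n-1}e}{\pi\sqrt{n}}D^{n-2}\sum_{2\leqslant m\leqslant D^{1/2+\epsilon}}m^2\log m\leqslant\frac{(n-1)2^{n-1}e}{\pi\sqrt{n}}D^{n-1/2+3\epsilon}\log D,\]
and again $n-1/2+3\epsilon<n$ under $\epsilon<1/6$, so this is $o(D^n)$ as well.

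Combining the two pieces gives the asymptotic formula together with the one-sided explicit bounds: on the lower side one invokes only Bernoulli and discards the nonnegative $(m-1)^n$ block, on the upper side one uses the full Taylor sandwich together with the crude bound on $(m-1)^n$ and a small rounding term $D^{1/2+\epsilon}$ accounting for the integer-versus-real endpoint of the summation. The main obstacle is not analytic but bookkeeping: arranging the chain of inequalities so that the constants $\frac{1}{2}$, $\frac{(n-1)2^{n-1}e}{\pi\sqrt{n}}$, and $1$ in front of the three remainder terms fall out exactly as stated, without hidden $\epsilon$-dependence.
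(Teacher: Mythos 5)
Your decomposition matches the paper's: split $(m-1)^n-(D-m)^n$ into a crude block and a Taylor block, bound $\sum(m-1)^n\log m$ by $\tfrac12 D^{(1/2+\epsilon)(n+1)}\log D$, and sandwich $(D-m)^n$ between $D^n-nD^{n-1}m$ (Bernoulli) and $D^n-nD^{n-1}m+\tfrac{(n-1)2^{n-1}e}{\pi\sqrt n}D^{n-2}m^2$ (Taylor with the Stirling-flavoured constant). The constants $\tfrac12$, $\tfrac{(n-1)2^{n-1}e}{\pi\sqrt n}$ and the exponent $n-1/2+3\epsilon$ all come out exactly as in the paper, which develops the same two intermediate two-sided bounds before the proposition.

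However, the final assembly step, where you say ``on the lower side one invokes only Bernoulli and discards the nonnegative $(m-1)^n$ block, on the upper side one uses the full Taylor sandwich,'' has the roles of Bernoulli and Taylor reversed. Since $(D-m)^n$ enters $S_1(n,D)=\sum(m-1)^n\log m-\sum(D-m)^n\log m$ with a minus sign, the Bernoulli \emph{lower} bound $(D-m)^n\geqslant D^n-nD^{n-1}m$ produces an \emph{upper} bound for $-\sum(D-m)^n\log m$, hence (combined with the crude upper bound on the $(m-1)^n$ piece) an \emph{upper} bound for $S_1$. Conversely, the Taylor \emph{upper} bound on $(D-m)^n$ is what gives the \emph{lower} bound for $S_1$, and it is then the Taylor remainder $\tfrac{(n-1)2^{n-1}e}{\pi\sqrt n}D^{n-1/2+3\epsilon}\log D$ (not the crude $(m-1)^n$ term) that should appear with the lower bound. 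As written, dropping the nonnegative block and invoking Bernoulli yields a lower bound for one summand and an upper bound for the other, which does not combine into a one-sided estimate for $S_1$. In fact, $S_1$ is negative for large $D$ (each summand $(m-1)^n-(D-m)^n$ is very negative for $m\leqslant D^{1/2+\epsilon}$), whereas $D^n\sum\log m-nD^{n-1}\sum m\log m$ is positive of order $D^{n+1/2+\epsilon}\log D$, so the claimed lower bound cannot hold literally with that sign; a correct argument produces $-D^n\sum\log m+nD^{n-1}\sum m\log m$ as the principal term, with the Taylor remainder on the lower side and the crude block on the upper side. You should either fix the sign and the remainder allocation, or at minimum explain how the two pointwise bounds on $(D-m)^n$ propagate through the minus sign in the definition of $S_1$.
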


In order to estimate $S_2(n,D)$, we are going to introduce the following lemma. It is a simple form of Euler-Maclaurin formula.
\begin{lemm}\label{integral approximation}
  Let $p,q$ be two positive integers, where $p\leqslant q$. For any function $f\in C^2([p-\frac{1}{2},q+\frac{1}{2}])$, there exists a real number $\Theta$ such that
\[\sum_{m=p}^qf(m)=\int_{p-\frac{1}{2}}^{q+\frac{1}{2}}f(x)dx+\frac{1}{8}f'\left(p-\frac{1}{2}\right)-\frac{1}{8}f'\left(q+\frac{1}{2}\right)+\Theta,\]
where $|\Theta|\leqslant(q-p+1)\sup\limits_{p-1/2\leqslant x\leqslant q+1/2}|f''(x)|$.
\end{lemm}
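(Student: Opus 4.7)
The plan is to reduce the lemma to a local Taylor estimate on each unit interval around an integer, then sum. The boundary correction terms will be absorbed into the error bound via a simple estimate on the difference $f'(p-\tfrac{1}{2}) - f'(q+\tfrac{1}{2})$.

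First, I would fix $m \in \{p, \ldots, q\}$ and apply Taylor's theorem with integral remainder to write, for $x \in [m-\tfrac{1}{2}, m+\tfrac{1}{2}]$,
\[
f(x) = f(m) + (x-m)f'(m) + \int_m^x (x-t)\,f''(t)\,dt.
\]
Integrating over $[m-\tfrac{1}{2}, m+\tfrac{1}{2}]$, the linear term vanishes by symmetry and the remainder is bounded by $\tfrac{1}{2}\sup_{[m-1/2,m+1/2]}|f''| \cdot \int_{m-1/2}^{m+1/2}(x-m)^2\,dx = \tfrac{1}{24}\sup|f''|$. Hence
\[
\left| f(m) - \int_{m-1/2}^{m+1/2} f(x)\,dx \right| \leqslant \tfrac{1}{24}\sup_{[m-1/2,m+1/2]}|f''(x)|.
\]

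Next I would sum over $m$ from $p$ to $q$: since the intervals $[m-\tfrac{1}{2},m+\tfrac{1}{2}]$ tile $[p-\tfrac{1}{2},q+\tfrac{1}{2}]$, this yields
\[
\left|\sum_{m=p}^q f(m) - \int_{p-1/2}^{q+1/2} f(x)\,dx\right| \leqslant \tfrac{q-p+1}{24}\,\sup_{[p-1/2,q+1/2]}|f''(x)|.
\]
Defining $\Theta := \sum_{m=p}^q f(m) - \int_{p-1/2}^{q+1/2} f(x)\,dx - \tfrac{1}{8}f'(p-\tfrac{1}{2}) + \tfrac{1}{8}f'(q+\tfrac{1}{2})$ produces the required identity, and it only remains to check the bound on $|\Theta|$.

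For this, I would use the fundamental theorem of calculus to write $f'(q+\tfrac{1}{2}) - f'(p-\tfrac{1}{2}) = \int_{p-1/2}^{q+1/2} f''(x)\,dx$, whence
\[
\bigl|f'(p-\tfrac{1}{2}) - f'(q+\tfrac{1}{2})\bigr| \leqslant (q-p+1)\,\sup_{[p-1/2,q+1/2]}|f''(x)|.
\]
Combining the two estimates via the triangle inequality,
\[
|\Theta| \leqslant \left(\tfrac{1}{24} + \tfrac{1}{8}\right)(q-p+1)\,\sup_{[p-1/2,q+1/2]}|f''(x)| = \tfrac{1}{6}(q-p+1)\,\sup|f''|,
\]
which is comfortably within the claimed bound $(q-p+1)\sup|f''|$. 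There is no real obstacle here; the only minor care needed is to ensure the Taylor remainder is handled without assuming more than $C^2$ regularity, which is why I would use the integral-remainder form of Taylor's theorem rather than the Lagrange form or a higher-order expansion that would produce the sharper Euler–Maclaurin coefficient $\tfrac{1}{24}$ in place of $\tfrac{1}{8}$.
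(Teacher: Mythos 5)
Your proof is correct, but it takes a genuinely different route from the paper's. The paper builds the $\tfrac18 f'$ corrections directly into the local estimate: on each half-interval $[m-\tfrac12,m]$ it subtracts the linear approximation $f(m) + f'(m-\tfrac12)(x-m)$, so that the auxiliary function has vanishing value at $m$ and vanishing derivative at $m-\tfrac12$; integrating this auxiliary function produces exactly $\int_{m-1/2}^m f - \tfrac12 f(m) + \tfrac18 f'(m-\tfrac12)$ with an error of order $\sup|f''|$, and the full-interval identity then comes for free because the local $\tfrac18 f'(m\pm\tfrac12)$ terms telescope across $m=p,\dots,q$. You instead use the symmetric Taylor expansion with integral remainder around the midpoint $m$, which eliminates the $f'$ term by parity and gives the tighter local bound $\tfrac{1}{24}\sup|f''|$ with no half-integer corrections at all; you then treat the $\tfrac18 f'(p-\tfrac12) - \tfrac18 f'(q+\tfrac12)$ terms in the lemma purely as bookkeeping and absorb them into $\Theta$ by applying the fundamental theorem of calculus to $f'$. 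This is arguably cleaner than the paper's argument (which asserts $\sup|g|\leqslant\sup|g''|$ somewhat tersely, in fact one can show $\sup|g|\leqslant\tfrac14\sup|g''|$ there), and it yields the sharper constant $\tfrac16(q-p+1)\sup|f''|$ in place of $(q-p+1)\sup|f''|$. The one conceptual trade-off is that your argument is only possible because the lemma's stated error bound has slack: you make no structural use of the $\tfrac18 f'$ boundary terms, whereas the paper's approach explains why those particular corrections appear. For the purpose of proving the stated bound, both arguments are valid and both use only $C^2$ regularity.
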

\begin{proof}
 By definition, we have
\begin{eqnarray*}
& &\int_{p-\frac{1}{2}}^{q+\frac{1}{2}}f(x)dx+\frac{1}{8}f'\left(p-\frac{1}{2}\right)-\frac{1}{8}f'\left(q+\frac{1}{2}\right)\\
&=&\sum_{m=p}^q\left(\int_{m-\frac{1}{2}}^{m+\frac{1}{2}}f(x)dx+\frac{1}{8}f'\left(m-\frac{1}{2}\right)-\frac{1}{8}f'\left(m+\frac{1}{2}\right)\right).
\end{eqnarray*}
Then we need to prove
\[f(m)=\int_{m-\frac{1}{2}}^{m+\frac{1}{2}}f(x)dx+\frac{1}{8}f'\left(m-\frac{1}{2}\right)-\frac{1}{8}f'\left(m+\frac{1}{2}\right)+\Theta(m),\]
where
\[\Theta(m)\leqslant\sup\limits_{m-1/2\leqslant x\leqslant m+1/2}|f''(x)|.\]

For a real number $x\in[m-\frac{1}{2},m]$, let
\[g(x)=f(x)-f(m)-f'\left(m-\frac{1}{2}\right)(x-m).\]
Then we have
\[g(m)=0,\:g'\left(m-\frac{1}{2}\right)=0,\:g''(x)=f''(x).\]

For the function $g(x)$, we have
\[\left|\int_{m-\frac{1}{2}}^mg(x)dx\right|\leqslant\frac{1}{2}\sup|g(x)|\leqslant\frac{1}{2}\sup|g''(x)|=\frac{1}{2}\sup|f''(x)|.\]
Then we obtain
\[\left|\int_{m-\frac{1}{2}}^mf(x)dx-\frac{1}{2}f(m)+\frac{1}{8}f'\left(m-\frac{1}{2}\right)\right|\leqslant\frac{1}{2}\sup|f''(x)|.\]
By the similar argument, we obtain
\[\left|\int_m^{m+\frac{1}{2}}f(x)dx-\frac{1}{2}f(m)-\frac{1}{8}f'\left(m+\frac{1}{2}\right)\right|\leqslant\frac{1}{2}\sup|f''(x)|.\]
Then we have
\[\left|\int_{m-\frac{1}{2}}^{m+\frac{1}{2}}f(x)dx-f(m)+\frac{1}{8}f'\left(m-\frac{1}{2}\right)-\frac{1}{8}f'\left(m+\frac{1}{2}\right)\right|\leqslant\sup|f''(x)|,\]
which proves the assertion.
\end{proof}
Let $x$ be a real number. We denote by $[x]_+$ the smallest integer which is larger than $x$. Let
\[f(x)=\Big((x-1)^n-(D-x)^n\Big)\log x,\]
where $[D^{1/2+\epsilon}]_+-\frac{1}{2}\leqslant x\leqslant D+\frac{1}{2}$.
\begin{prop}
Let $S_2(n,D)$ be as in \eqref{def of S_2(n,D)}. We have
\[S_2(n,D)=\int_{[D^{1/2+\epsilon}]_+-\frac{1}{2}}^{D+\frac{1}{2}}\Big((x-1)^n-(D-x)^n\Big)\log x dx+o(D^n).\]
In addition, we have
\begin{eqnarray*}
  S_2(n,D)&\geqslant& \int_{[D^{1/2+\epsilon}]_+-\frac{1}{2}}^{D+\frac{1}{2}}\Big((x-1)^n-(D-x)^n\Big)\log x dx\\
  & &-8n(n-1)\left(D-\frac{1}{2}\right)^{n-1}\log\left(D+\frac{1}{2}\right),
\end{eqnarray*}
and
\begin{eqnarray*}
  S_2(n,D)&\leqslant& \int_{[D^{1/2+\epsilon}]_+-\frac{1}{2}}^{D+\frac{1}{2}}\Big((x-1)^n-(D-x)^n\Big)\log x dx\\
  & &+8n(n-1)\left(D-\frac{1}{2}\right)^{n-1}\log\left(D+\frac{1}{2}\right).
\end{eqnarray*}
\end{prop}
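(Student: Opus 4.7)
The plan is to apply Lemma~\ref{integral approximation} to the function
\[f(x) := \bigl((x-1)^n - (D-x)^n\bigr)\log x\]
on the interval $[p - 1/2, D + 1/2]$, where $p := [D^{1/2+\epsilon}]_+$ and $q := D$. Since $S_2(n,D) = \sum_{m=p}^{q} f(m)$, the lemma yields directly
\[S_2(n,D) = \int_{p-1/2}^{D+1/2} f(x)\,dx + \frac{1}{8}f'(p-1/2) - \frac{1}{8}f'(D+1/2) + \Theta,\]
with $|\Theta| \leq (D - p + 1)\sup_{x \in [p-1/2,D+1/2]}|f''(x)|$. The proof then reduces to bounding the two endpoint values of $f'$ and the supremum of $|f''|$ on the interval.

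The next step is an explicit differentiation:
\[f'(x) = n\bigl((x-1)^{n-1}+(D-x)^{n-1}\bigr)\log x + \frac{(x-1)^n - (D-x)^n}{x},\]
\[f''(x) = n(n-1)\bigl((x-1)^{n-2}-(D-x)^{n-2}\bigr)\log x + \frac{2n\bigl((x-1)^{n-1}+(D-x)^{n-1}\bigr)}{x} - \frac{(x-1)^n - (D-x)^n}{x^2}.\]
On the interval both $|x-1|$ and $|D-x|$ are at most $D - 1/2$, while the lower bound $x \geq p - 1/2 \geq D^{1/2+\epsilon} - 3/2$ controls the denominators $1/x$ and $1/x^2$. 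A termwise bound then shows that each of $|f'(p-1/2)|$ and $|f'(D+1/2)|$ is at most a modest multiple of $n(D-1/2)^{n-1}\log(D+1/2)$, and that the logarithmic term of $|f''|$ is bounded by $2n(n-1)(D-1/2)^{n-2}\log(D+1/2)$, which once multiplied by $(D-p+1)\leq D$ lands in the same order $(D-1/2)^{n-1}\log(D+1/2)$.

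Collecting all numerical prefactors into the common constant $8$ then delivers the two-sided explicit bound asserted in the proposition. The asymptotic statement $S_2(n,D) = \int f + o(D^n)$ follows immediately, since the explicit error $8n(n-1)(D-1/2)^{n-1}\log(D+1/2)$ is of order $D^{n-1}\log D = o(D^n)$.

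The main obstacle is the third term of $f''$, namely $((x-1)^n - (D-x)^n)/x^2$: near the lower endpoint $x \sim D^{1/2+\epsilon}$ the numerator is of size $D^n$ while the denominator is only of order $D^{1+2\epsilon}$, so a naive pointwise sup estimate loses a factor $D^{1-2\epsilon}$ compared with the target bound. The hypothesis $\epsilon \in (0, 1/6)$ is what keeps this term dominated in the final accounting, and the delicate part of the argument is to verify uniformly in $D \geq \delta$ that the sum of this contribution with the two $\frac{1}{8}f'$ boundary corrections and the other two terms of $\sup|f''|$ still fits under the single coefficient $8n(n-1)$.
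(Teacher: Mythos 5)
You take the same route as the paper: apply Lemma~\ref{integral approximation} to $f(x)=\bigl((x-1)^n-(D-x)^n\bigr)\log x$ on $[p-\tfrac12,D+\tfrac12]$ with $p=[D^{1/2+\epsilon}]_+$, compute $f'$ and $f''$, and then bound $\tfrac18 f'$ at the endpoints and $(q-p+1)\sup|f''|$. (Your sign in the first term of $f''$ is the correct one; the paper's displayed formula has a typo there, but this does not affect the size estimates.)

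However, you have flagged the crux without actually crossing it, and the step does not go through as stated. Near the lower endpoint $x\approx D^{1/2+\epsilon}$ the term $\bigl|(x-1)^n-(D-x)^n\bigr|/x^2$ is of size $D^{n-1-2\epsilon}$, so the global supremum of $|f''|$ on the interval is of that order, and multiplying by $q-p+1\approx D$ gives a contribution of size $D^{n-2\epsilon}$. For $\epsilon\in(0,\tfrac16)$ this is at least $D^{n-1/3}$, which is strictly larger than the target $D^{n-1}\log D$ as $D\to\infty$. Moreover the boundary term $\tfrac18 f'(p-\tfrac12)$ already contains $\bigl((p-\tfrac12-1)^n-(D-p+\tfrac12)^n\bigr)/(p-\tfrac12)\approx -D^{n-1/2-\epsilon}$, also larger than $D^{n-1}\log D$. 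Your remark that ``the hypothesis $\epsilon\in(0,\tfrac16)$ is what keeps this term dominated'' is the wrong intuition: shrinking $\epsilon$ pushes the lower endpoint closer to the origin and makes both of these contributions \emph{worse}, not better. (What $\epsilon<\tfrac16$ actually controls is the \emph{upper}-end error coming from $S_1$, not the $1/x$-singularity in $S_2$.) So ``collecting all numerical prefactors into the common constant $8$'' is not a proof here; an honest accounting of Lemma~\ref{integral approximation} in the crude $(q-p+1)\sup|f''|$ form only yields an error of order $D^{n-2\epsilon}$, which does suffice for the $o(D^n)$ statement but not for the explicit $8n(n-1)(D-\tfrac12)^{n-1}\log(D+\tfrac12)$ bound. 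To get the stated explicit bound one would need, at a minimum, to replace the global supremum by the telescoping-per-interval estimate $\sum_m\sup_{[m-1/2,m+1/2]}|f''|$ (which is what the proof of the lemma actually produces) and then integrate $D^n/x^2$ rather than multiplying the pointwise maximum by the interval length; and even that yields $D^{n-1/2-\epsilon}$, so the $f'(p-\tfrac12)$ term and the $1/x^2$-integral would still need a further idea to be brought down to $D^{n-1}\log D$. As written, both your argument and the paper's leave this uncontrolled.
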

\begin{proof}
 The estimate of the dominant term of $S_2(n,D)$ is by Lemma \ref{integral approximation}. For the estimate of the remainder of $S_2(n,D)$, we have
\[f'(x)=\frac{(x-1)^n-(D-x)^n}{x}+n\left((x-1)^{n-1}+(D-x)^{n-1}\right)\log x\]
and
\begin{eqnarray*}
  f''(x)&=&-\frac{(x-1)^n-(D-x)^n}{x^2}+\frac{2n\left((x-1)^{n-1}+(D-x)^{n-1}\right)}{x}\\
  & &+n(n-1)\left((x-1)^{n-2}+(D-x)^{n-2}\right)\log x.
\end{eqnarray*}
Then we obtain that
\[\left|\frac{1}{8}f'\left([D^{1/2+\epsilon}]_+-\frac{1}{2}\right)-\frac{1}{8}f'\left(D+\frac{1}{2}\right)+\Big(D-D^{1/2+\epsilon}+1\Big)\sup_{[D^{1/2+\epsilon}]_+-\frac{1}{2}\leqslant x\leqslant D+\frac{1}{2}}|f''(x)|\right|\]
is smaller than or equal to
\[8n(n-1)\left(D-\frac{1}{2}\right)^{n-1}\log\left(D+\frac{1}{2}\right).\]
So we have the result.
\end{proof}

We will estimate $S_2(n,D)$ by some integrations. In fact, we have
\[\int_1^{[D^{1/2+\epsilon}]_+-\frac{1}{2}}(x-1)^n\log x dx\geqslant0,\]
and
\[\int_1^{[D^{1/2+\epsilon}]_+-\frac{1}{2}}(x-1)^n\log x dx\leqslant\frac{1}{2}\left(D^{(1/2+\epsilon)(n+1)}\log D\right).\]
We consider the integration
\[\int_1^{[D^{1/2+\epsilon}]_+-\frac{1}{2}}(D-x)^n\log x dx-\int_1^{[D^{1/2+\epsilon}]_+-\frac{1}{2}}\left(D^n-nD^{n-1}x\right)\log x dx.\]
Then we have
\begin{eqnarray*}
& &\int_1^{[D^{1/2+\epsilon}]_+-\frac{1}{2}}(D-x)^n\log x dx-\int_1^{[D^{1/2+\epsilon}]_+-\frac{1}{2}}\left(D^n-nD^{n-1}x\right)\log x dx\\
&\geqslant&0,
\end{eqnarray*}
and
\begin{eqnarray*}
& &\int_1^{[D^{1/2+\epsilon}]_+-\frac{1}{2}}\log x(D-x)^ndx-\int_1^{[D^{1/2+\epsilon}]_+-\frac{1}{2}}\log x\left(D^n-nD^{n-1}x\right)dx\\
&\leqslant&\frac{(n-1)2^{n-1}e}{\pi\sqrt{n}}D^{n-1/2+3\epsilon}\log D.
\end{eqnarray*}
So we obtain:
\begin{coro}\label{3.3.10}
We have
\begin{eqnarray*}
  S_2(n,D)&=&\int_{1}^{D+\frac{1}{2}}\Big((x-1)^n-(D-x)^n\Big)\log x dx\\
& &-D^n\left(\int_{1}^{[D^{1/2+\epsilon}]_+-\frac{1}{2}}\log xdx\right)+nD^{n-1}\left(\int_{1}^{[D^{1/2+\epsilon}]_+-\frac{1}{2}}x\log xdx\right)+o(D^n).
\end{eqnarray*}
In addition, we have
\begin{eqnarray*}
  S_2(n,D)&\geqslant&\int_{1}^{D+\frac{1}{2}}\Big((x-1)^n-(D-x)^n\Big)\log x dx\\
& &-D^n\left(\int_{1}^{[D^{1/2+\epsilon}]_+-\frac{1}{2}}\log xdx\right)+nD^{n-1}\left(\int_{1}^{[D^{1/2+\epsilon}]_+-\frac{1}{2}}x\log xdx\right)\\
& &-8n(n-1)\left(D-\frac{1}{2}\right)^{n-1}\log\left(D+\frac{1}{2}\right)
\end{eqnarray*}
and
\begin{eqnarray*}
  S_2(n,D)&\leqslant&\int_{1}^{D+\frac{1}{2}}\Big((x-1)^n-(D-x)^n\Big)\log x dx\\
& &-D^n\left(\int_{1}^{[D^{1/2+\epsilon}]_+-\frac{1}{2}}\log xdx\right)+nD^{n-1}\left(\int_{1}^{[D^{1/2+\epsilon}]_+-\frac{1}{2}}x\log xdx\right)\\
& &+8n(n-1)\left(D-\frac{1}{2}\right)^{n-1}\log\left(D+\frac{1}{2}\right)+\frac{(n-1)2^{n-1}e}{\pi\sqrt{n}}D^{n-1/2+3\epsilon}\log D.
\end{eqnarray*}
\end{coro}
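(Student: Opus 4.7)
The plan is to combine the previous proposition (which expresses $S_2(n,D)$ as $\int_{[D^{1/2+\epsilon}]_+-\frac{1}{2}}^{D+\frac{1}{2}}\bigl((x-1)^n-(D-x)^n\bigr)\log x\,dx$ plus a remainder bounded by $8n(n-1)(D-\frac{1}{2})^{n-1}\log(D+\frac{1}{2})$) with the two scalar inequalities stated in the paragraph immediately preceding the corollary. The first step is to split the integration domain as
\[
\int_{[D^{1/2+\epsilon}]_+-\frac{1}{2}}^{D+\frac{1}{2}} \;=\; \int_{1}^{D+\frac{1}{2}} \;-\; \int_{1}^{[D^{1/2+\epsilon}]_+-\frac{1}{2}},
\]
which is valid as soon as $D$ is large enough that $[D^{1/2+\epsilon}]_+-\frac{1}{2}\geqslant 1$. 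The game then reduces to controlling the small-interval integral $\int_{1}^{[D^{1/2+\epsilon}]_+-\frac{1}{2}}\bigl((x-1)^n-(D-x)^n\bigr)\log x\,dx$.

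Next I would handle the two summands of that small-interval integral separately. For $\int_{1}^{[D^{1/2+\epsilon}]_+-\frac{1}{2}}(x-1)^n\log x\,dx$ I would simply cite the trivial bound already recorded: the integral is non-negative and bounded above by $\tfrac{1}{2}D^{(1/2+\epsilon)(n+1)}\log D$. For $\int_{1}^{[D^{1/2+\epsilon}]_+-\frac{1}{2}}(D-x)^n\log x\,dx$ I would Taylor-expand $(D-x)^n$ around $x=0$, writing $(D-x)^n=D^n-nD^{n-1}x+R(x)$ with $R(x)\geqslant 0$ for $0\leqslant x\leqslant D$ (convexity, since $\frac{d^2}{dx^2}(D-x)^n=n(n-1)(D-x)^{n-2}\geqslant 0$). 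This justifies the non-negativity of the difference, and the explicit upper bound $\tfrac{(n-1)2^{n-1}e}{\pi\sqrt{n}}D^{n-1/2+3\epsilon}\log D$ recorded in the text is then obtained by brute estimation of $R(x)\log x$ on $[1,D^{1/2+\epsilon}]$.

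Once these two pieces are assembled, one writes the exact identity
\[
\int_{[D^{1/2+\epsilon}]_+-\frac{1}{2}}^{D+\frac{1}{2}}\!\!(\cdots)\,dx
= \int_{1}^{D+\frac{1}{2}}\!\!(\cdots)\,dx
-\int_{1}^{[D^{1/2+\epsilon}]_+-\frac{1}{2}}\!\!(x-1)^n\log x\,dx
+\int_{1}^{[D^{1/2+\epsilon}]_+-\frac{1}{2}}\!\!(D^n-nD^{n-1}x)\log x\,dx + \eta,
\]
with $\eta$ bounded in absolute value by $\tfrac{(n-1)2^{n-1}e}{\pi\sqrt{n}}D^{n-1/2+3\epsilon}\log D$, split the integral on the right into its $D^n\int\log x\,dx$ and $nD^{n-1}\int x\log x\,dx$ parts, absorb the $\int(x-1)^n\log x$ term into an error, and finally add the remainder of the previous proposition to recover $S_2(n,D)$.

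The only substantive point to check is that every stray term really is $o(D^n)$: the condition $(1/2+\epsilon)(n+1)<n$ must hold (for the $(x-1)^n$ piece) and also $n-1/2+3\epsilon<n$ (for the Taylor remainder). Both are equivalent to $\epsilon<\tfrac{1}{6}$ when $n\geqslant 2$ and $n\geqslant 2$ respectively, which is precisely the hypothesis $\epsilon\in\,]0,\tfrac{1}{6}[$ fixed at the beginning of the subsection; this is the only mildly delicate step, and it is what motivates the particular choice of cutoff $D^{1/2+\epsilon}$. Once these inequalities are in hand, the upper and lower bounds stated in the corollary drop out by choosing the signs of $\eta$ and of the $(x-1)^n$ error term suitably.
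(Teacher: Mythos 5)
Your plan reproduces the paper's own proof exactly: split the integration interval at $x=1$, bound $\int_1^{[D^{1/2+\epsilon}]_+-1/2}(x-1)^n\log x\,dx$ by the trivial estimate, approximate $(D-x)^n$ by its two-term Taylor polynomial $D^n-nD^{n-1}x$ with a nonnegative remainder, and then re-assemble with the preceding proposition; these are precisely the two pairs of displayed inequalities placed between that proposition and the corollary, and you correctly identify that $\epsilon<\tfrac16$ is what makes both stray terms $o(D^n)$.

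The one place where you should not be casual is the final bookkeeping you dismiss with ``drop out by choosing the signs of $\eta$ and of the $(x-1)^n$ error term suitably''. If you carry through the exact identity you wrote, the small-interval integral enters with an overall minus sign, and since on $[1,D^{1/2+\epsilon}]$ the integrand is dominated by $-(D-x)^n\log x\approx-(D^n-nD^{n-1}x)\log x$, what comes out is $\int_1^{D+1/2}(\cdots)\,dx+D^n\int_1^{\cdot}\log x\,dx-nD^{n-1}\int_1^{\cdot}x\log x\,dx+o(D^n)$ — the opposite sign on the two correction integrals from what the corollary states. This is the same sign that appears in Proposition~\ref{3.3.7}, where it is visibly problematic ($S_1(n,D)$ is manifestly negative on its range, while the asserted $D^n\sum\log m-nD^{n-1}\sum m\log m$ is large and positive). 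Your method is sound and would produce the corrected version, but a blind appeal to ``choosing the signs suitably'' hides the fact that there is a genuine discrepancy with the printed statement that needs to be confronted, not absorbed.
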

We are going to combine the estimates of $S_1(n,D)$ and $S_2(n,D)$ in Proposition \ref{3.3.7} and Corollary \ref{3.3.10}. First, we have:
\begin{lemm}\label{gamma}
The function
\[\sum_{m\leqslant D^{1/2+\epsilon}}\log m-\int_1^{[D^{1/2+\epsilon}]_+-\frac{1}{2}}\log xdx\]
converges to
  \[-1+\frac{1}{2}\log\left(2\pi\right)\]
when $D$ tends to $+\infty$. In addition, we have
\begin{eqnarray*}
0&\leqslant&\sum_{m\leqslant D^{1/2+\epsilon}}\log m-\int_1^{[D^{1/2+\epsilon}]_+-\frac{1}{2}}\log xdx-\left(-1+\frac{1}{2}\log\left(2\pi\right)\right)\\
&\leqslant&\frac{3}{2}\log\frac{3}{2}+\frac{1}{2}-\frac{1}{2}\log\left(2\pi\right)
\end{eqnarray*}
\end{lemm}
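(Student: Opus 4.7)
The plan is to reduce the statement to a concrete Stirling-type estimate for an integer function, then identify the limit and bound the error. Set $N = [D^{1/2+\epsilon}]_+$; by the definition of $[\,\cdot\,]_+$, we have $N - 1 = \lfloor D^{1/2+\epsilon}\rfloor$, so the discrete sum rewrites as a factorial,
\[
\sum_{m \leqslant D^{1/2+\epsilon}} \log m = \log\bigl((N-1)!\bigr),
\]
and the integral evaluates exactly as
\[
\int_1^{N-1/2} \log x\, dx = \bigl(N-\tfrac{1}{2}\bigr)\log\bigl(N-\tfrac{1}{2}\bigr) - N + \tfrac{3}{2}.
\]
Hence the expression in the lemma depends only on the integer $N \geqslant 2$; since $N \to \infty$ as $D \to \infty$, both the convergence and the uniform bounds follow from analysing the resulting function of $N$.

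Next, I would apply the explicit two-sided Stirling inequality $\sqrt{2\pi}\,k^{k+1/2}e^{-k} \leqslant k! \leqslant \sqrt{2\pi}\,k^{k+1/2}e^{-k+1/(12k)}$ with $k = N-1$, yielding $\log((N-1)!) = (N-1)\log(N-1) - (N-1) + \tfrac{1}{2}\log\bigl(2\pi(N-1)\bigr) + \theta_N$ with $0 \leqslant \theta_N \leqslant \tfrac{1}{12(N-1)}$. Using the decomposition $\log\bigl(N-\tfrac{1}{2}\bigr) = \log(N-1) + \log\bigl(1 + \tfrac{1}{2(N-1)}\bigr)$, the alternating Taylor series gives $\bigl(N-\tfrac{1}{2}\bigr)\log\bigl(1+\tfrac{1}{2(N-1)}\bigr) = \tfrac{1}{2} + \tfrac{1}{8(N-1)} + O\bigl(1/(N-1)^2\bigr)$. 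After substitution the three leading terms $(N-1)\log(N-1)$, $N$, and $\tfrac{1}{2}\log(N-1)$ cancel, leaving precisely $-1 + \tfrac{1}{2}\log(2\pi)$ plus a remainder of order $1/N$; this establishes the convergence.

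For the explicit uniform bounds, I would track the Stirling error $\theta_N$ and the alternating-series remainder for $\log\bigl(1 + \tfrac{1}{2(N-1)}\bigr)$ with explicit constants, yielding a two-sided bound on the deviation from the limit valid for every $N \geqslant 2$. The constant $\tfrac{3}{2}\log\tfrac{3}{2} + \tfrac{1}{2} - \tfrac{1}{2}\log(2\pi)$ then arises by direct evaluation at the smallest admissible case $N = 2$, where $\log((N-1)!) = 0$ and the integral equals $\tfrac{3}{2}\log\tfrac{3}{2} - \tfrac{1}{2}$. The main obstacle will be verifying that this boundary value is indeed extremal for every $N \geqslant 2$, which reduces to a monotonicity statement for the discrete function: one must analyse the successive difference $\log N - \bigl[(N+\tfrac{1}{2})\log(N+\tfrac{1}{2}) - (N-\tfrac{1}{2})\log(N-\tfrac{1}{2})\bigr] + 1$, which by convexity of $x \log x$ together with the mean value theorem is of size $1/N^2$ with the expected sign. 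Once monotonicity is in place, the upper bound comes from the $N = 2$ evaluation and the lower bound from the asymptotic limit.
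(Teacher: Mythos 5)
Your route is genuinely different from the paper's: you reduce to an explicit two-sided Stirling inequality and propose to propagate its error term, whereas the paper invokes Stirling only in $o(1)$ form for the limit, and for the uniform bound simply asserts that $a_n := \sum_{m\leqslant n}\log m - \int_1^{n+1/2}\log x\,dx$ is monotone and evaluates $a_1$. Your scheme is sharper in principle, but the two places you leave ``to be checked'' are precisely where the argument breaks, and neither checks out.

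First, the boundary evaluation does not produce the stated constant. With $N=2$ you correctly have $\log\bigl((N-1)!\bigr)=0$ and $\int_1^{3/2}\log x\,dx = \tfrac{3}{2}\log\tfrac{3}{2}-\tfrac{1}{2}$, but then the quantity in the lemma minus the limit equals
\[
-\Bigl(\tfrac{3}{2}\log\tfrac{3}{2}-\tfrac{1}{2}\Bigr)-\Bigl(-1+\tfrac{1}{2}\log(2\pi)\Bigr)
=\tfrac{3}{2}-\tfrac{3}{2}\log\tfrac{3}{2}-\tfrac{1}{2}\log(2\pi)\approx -0.027,
\]
which is negative and is \emph{not} the constant $\tfrac{3}{2}\log\tfrac{3}{2}+\tfrac{1}{2}-\tfrac{1}{2}\log(2\pi)\approx 0.19$: the sign in front of $\tfrac{3}{2}\log\tfrac{3}{2}$ comes out opposite. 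Second, the monotonicity is not of the ``expected sign.'' By Hermite--Hadamard for the concave function $\log$, the successive difference $\log N - \int_{N-1/2}^{N+1/2}\log x\,dx$ is $\geqslant 0$, so $a_n$ \emph{increases} to its limit; the $N=2$ case is therefore the minimum, not the maximum, and the deviation from the limit is $\leqslant 0$, contradicting the stated lower bound $0$. Your own Stirling bookkeeping, pushed to completion, detects this as well: with $\tfrac{1}{12k+1}\leqslant \log(k!)-\bigl[k\log k-k+\tfrac{1}{2}\log(2\pi k)\bigr]\leqslant\tfrac{1}{12k}$ one finds the deviation equals $\theta_N-\tfrac{1}{8(N-1)}+O\bigl((N-1)^{-2}\bigr)$, and $\tfrac{1}{12(N-1)}<\tfrac{1}{8(N-1)}$, so it is strictly negative for large $N$. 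These are not merely gaps in your exposition: they reflect a sign error in the lemma (and in the paper's own proof, which asserts the sequence is decreasing and gives $a_1 = \tfrac{3}{2}\log\tfrac{3}{2}-\tfrac{1}{2}$ instead of $\tfrac{1}{2}-\tfrac{3}{2}\log\tfrac{3}{2}$). As written the stated two-sided inequality cannot hold; the correct uniform bounds are $\tfrac{3}{2}-\tfrac{3}{2}\log\tfrac{3}{2}-\tfrac{1}{2}\log(2\pi)\leqslant a_n-\bigl(-1+\tfrac{1}{2}\log(2\pi)\bigr)\leqslant 0$.
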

\begin{proof}
  Let
\[a_n=\sum_{m\leqslant n}\log m-\int_1^{n+\frac{1}{2}}\log xdx.\]
By definition, the series $\{a_n\}_{n\geqslant1}$ is decreasing, and $a_1=\frac{3}{2}\log\frac{3}{2}-\frac{1}{2}$.

By the Stirling formula
\[n!=\sqrt{2\pi n}\left(\frac{n}{e}\right)^n\left(1+O\left(\frac{1}{n}\right)\right),\]
we obtain
\[\sum_{m\leqslant n}\log m=\log(n!)=\frac{1}{2}\log\left(2\pi\right)+\frac{1}{2}\log n+n\log n-n+o(1).\]
Next, we consider the integration
\begin{eqnarray*}
\int_1^{n+\frac{1}{2}}\log xdx&=&\left(n+\frac{1}{2}\right)\log\left(n+\frac{1}{2}\right)-\left(n-\frac{1}{2}\right)\\
&=&\frac{1}{2}\log n+n\log n-n+1+o(1).
\end{eqnarray*}
Then we obtain the limit of $\{a_n\}_{n\geqslant1}$. So we have the assertion.
\end{proof}

\begin{lemm}\label{3.3.12}
We have
\[0\leqslant\sum_{m\leqslant D^{1/2+\epsilon}}m\log m-\int_1^{[D^{1/2+\epsilon}]_+-\frac{1}{2}}x\log xdx\leqslant\frac{1}{4}\log D.\]
\end{lemm}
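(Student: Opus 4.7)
The plan is to compare the sum with the integral on a term-by-term basis, in the same spirit as the proof of Lemma~\ref{gamma} but with the integrand $x\log x$, which is more singular. Writing $N = [D^{1/2+\epsilon}]_+ - 1$ (so the upper endpoint of the integral is $N+\frac{1}{2}$ and the sum runs over $m = 1, \ldots, N$, noting $1\cdot\log 1 = 0$), splitting the integral at the half-integer points yields
\[
\int_1^{N+\frac{1}{2}} x \log x \, dx - \sum_{m \leqslant D^{1/2+\epsilon}} m \log m \;=\; \int_1^{3/2} x \log x \, dx + \sum_{m=2}^{N} \left( \int_{m-\frac{1}{2}}^{m+\frac{1}{2}} x \log x \, dx - m \log m \right).
\]

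The key step is to estimate each bracketed term locally. Substituting $x = m+u$ and Taylor-expanding $g(u) := (m+u)\log(m+u)$ around $u=0$, the odd-order terms vanish upon integration over $[-\tfrac{1}{2},\tfrac{1}{2}]$, and because $g''(0) = 1/m$ the second-order contribution produces
\[
\int_{m-\frac{1}{2}}^{m+\frac{1}{2}} x \log x \, dx - m \log m \;=\; \frac{1}{24\,m} + O\!\left(\frac{1}{m^{3}}\right).
\]
Summing over $m = 2, \ldots, N$, the partial harmonic sum gives a total contribution of $\frac{1}{24}\log N + O(1)$, which together with the bounded boundary term $\int_1^{3/2} x \log x \, dx$ controls the whole right-hand side in absolute value by $\frac{1}{24}\log N + O(1)$. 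Since $\epsilon < \frac{1}{6}$ forces $\log N \leqslant \bigl(\tfrac{1}{2}+\epsilon\bigr)\log D < \tfrac{2}{3}\log D$, this bound is comfortably dominated by $\tfrac{1}{4}\log D$ once $D$ is sufficiently large, which delivers the upper bound in the lemma.

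The main obstacle will be obtaining the bounds in the precise two-sided form $0 \leqslant \sum - \int \leqslant \tfrac{1}{4}\log D$ stated in the lemma. The Taylor estimate above controls the absolute value of the difference and so gives the upper bound directly; pinning down the sign of the difference, however, is delicate. To handle it one would analyse the monotonicity of the sequence $a_N := \sum_{m=1}^N m \log m - \int_1^{N+\frac{1}{2}} x \log x \, dx$ by computing $a_{N+1} - a_N = (N+1)\log(N+1) - \int_{N+\frac{1}{2}}^{N+\frac{3}{2}} x \log x \, dx$ and invoking convexity of $x \log x$ via Jensen on $[N+\tfrac{1}{2}, N+\tfrac{3}{2}]$, together with an explicit evaluation at small $N$, in close analogy with the treatment of $\{a_n\}$ in Lemma~\ref{gamma}.
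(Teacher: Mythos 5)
Your midpoint-rule decomposition and Taylor analysis are sound, and in fact sharper than what the printed proof offers: you correctly pin the per-interval error for $x\log x$ at $\tfrac{1}{24m}+O(m^{-3})$, whereas the paper's proof applies Lemma~\ref{integral approximation} to $f(x)=\log x$ rather than to $f(x)=x\log x$, so the displayed estimate there controls $\log m-\int_{m-1/2}^{m+1/2}\log x\,dx$ and does not directly address the quantity in this lemma. Two caveats remain. First, your $O(1)$ and $O(m^{-3})$ terms are never quantified, so as written the argument yields the bound only for $D$ sufficiently large, whereas the appendix needs it uniformly; those constants would have to be made explicit.

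The second caveat is fatal to the proposal as a proof of the lemma as stated. You flag the sign as ``delicate'' and propose to settle it by monotonicity and Jensen, but carrying that plan through produces the opposite sign. Since $x\log x$ is convex, the tangent at the midpoint lies below the graph, so $\int_{m-1/2}^{m+1/2}x\log x\,dx\geqslant m\log m$ for every $m\geqslant 1$; every bracketed term in your decomposition is therefore nonnegative, as is the boundary piece $\int_1^{3/2}x\log x\,dx$, and hence
\[
\sum_{m\leqslant D^{1/2+\epsilon}}m\log m-\int_1^{[D^{1/2+\epsilon}]_+-\frac{1}{2}}x\log x\,dx\;\leqslant\;-\int_1^{3/2}x\log x\,dx\;<\;0.
\]
Equivalently, your sequence $a_N$ is decreasing (Jensen on each interval $[N+\tfrac{1}{2},N+\tfrac{3}{2}]$) with $a_1=-\int_1^{3/2}x\log x\,dx<0$, so it is never nonnegative. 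Your analysis actually shows the difference lies in $\bigl[-\tfrac{1}{24}\log N-O(1),\,-\int_1^{3/2}x\log x\,dx\bigr]$: the claimed upper bound $\leqslant\tfrac{1}{4}\log D$ is then trivially satisfied, but the claimed lower bound $\geqslant 0$ is false and cannot be established. This is not a gap you can close; it reflects a sign error in the lemma itself (the ``by a direct calculation'' claim in the paper does not survive inspection), and you should verify which sign of the inequality is actually needed in Proposition~\ref{integral of S(n,D)}.
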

\begin{proof}
In fact, we have
\[\sum_{m\leqslant D^{1/2+\epsilon}}m\log m-\int_1^{[D^{1/2+\epsilon}]_+-\frac{1}{2}}x\log xdx\geqslant0\]
by a direct calculation.

For the other side, we apply Lemma \ref{integral approximation} to the function $f(x)=\log x$, then we obtain
\[\left|\log m-\int_{m-\frac{1}{2}}^{m+\frac{1}{2}}\log xdx-\frac{1}{8}\left(m-\frac{1}{2}\right)^{-1}+\frac{1}{8}\left(m+\frac{1}{2}\right)^{-1}\right|\leqslant \left(m-\frac{1}{2}\right)^{-2}.\]
Then
\[\sum_{m\leqslant D^{1/2+\epsilon}}m\log m-\int_1^{[D^{1/2+\epsilon}]_+-\frac{1}{2}}x\log xdx\leqslant\frac{1}{4}\sum_{m\leqslant D^{1/2+\epsilon}}m^{-1}\leqslant\frac{1}{4}\log D.\]
\end{proof}
We combine the Corollary \ref{3.3.10}, Lemma \ref{gamma}, and Lemma \ref{3.3.12}, we obtain the following result.
\begin{prop}\label{integral of S(n,D)}
Let $S_n(n,D)$ be as in the equality \eqref{def of S(n,D)}. Then we have
\[S(n,D)=\int_1^{D+\frac{1}{2}}\Big((x-1)^n-(D-x)^n\Big)\log x dx+\left(-1+\frac{1}{2}\log\left(2\pi\right)\right) D^n+o(D^n).\]
In addition, we have
\begin{eqnarray*}
  & &S(n,D)-\left(\int_1^{D+\frac{1}{2}}\Big((x-1)^n-(D-x)^n\Big)\log x dx+\left(-1+\frac{1}{2}\log\left(2\pi\right)\right) D^n\right)\\
  &\geqslant&-8n(n-1)\left(D-\frac{1}{2}\right)^{n-1}\log\left(D+\frac{1}{2}\right)-\left(\frac{3}{2}\log\frac{3}{2}+\frac{1}{2}-\frac{1}{2}\log\left(2\pi\right)\right)
\end{eqnarray*}
and
\begin{eqnarray*}
  & &S(n,D)-\left(\int_1^{D+\frac{1}{2}}\Big((x-1)^n-(D-x)^n\Big)\log x dx+\left(-1+\frac{1}{2}\log\left(2\pi\right) \right)D^n\right)\\
  &\leqslant&8n(n-1)\left(D-\frac{1}{2}\right)^{n-1}\log\left(D+\frac{1}{2}\right)+\frac{(n-1)2^{n}e}{\pi\sqrt{n}}D^{n-1/2+3\epsilon}\log D.
\end{eqnarray*}
\end{prop}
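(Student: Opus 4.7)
The plan is to combine the estimates already established for $S_1(n,D)$ and $S_2(n,D)$ in Proposition \ref{3.3.7} and Corollary \ref{3.3.10}, and then apply the two asymptotic lemmas (Lemma \ref{gamma} and Lemma \ref{3.3.12}) to identify the contribution of the remaining sum-versus-integral discrepancies. Since $S(n,D) = S_1(n,D) + S_2(n,D)$ by the definitions \eqref{def of S_1(n,D)} and \eqref{def of S_2(n,D)}, the only work is careful bookkeeping.

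First I would add the expressions provided by Proposition \ref{3.3.7} and Corollary \ref{3.3.10}. The terms of the form $-D^n\int_1^{[D^{1/2+\epsilon}]_+-1/2}\log x\,dx$ and $+nD^{n-1}\int_1^{[D^{1/2+\epsilon}]_+-1/2}x\log x\,dx$ arising in the estimate of $S_2$ cancel, up to a bounded error, with the terms $D^n\sum_{2\leqslant m\leqslant D^{1/2+\epsilon}}\log m$ and $-nD^{n-1}\sum_{2\leqslant m\leqslant D^{1/2+\epsilon}}m\log m$ arising in $S_1$. Specifically, summing yields
\[
S(n,D) = \int_1^{D+\frac12}\Big((x-1)^n-(D-x)^n\Big)\log x\,dx + D^n\,A(D) - nD^{n-1}\,B(D) + R(n,D),
\]
where
\[
A(D) = \sum_{m\leqslant D^{1/2+\epsilon}}\log m - \int_1^{[D^{1/2+\epsilon}]_+-\frac12}\log x\,dx,
\]
\[
B(D) = \sum_{m\leqslant D^{1/2+\epsilon}}m\log m - \int_1^{[D^{1/2+\epsilon}]_+-\frac12}x\log x\,dx,
\]
and $R(n,D)$ is the sum of the explicit remainder terms coming from Proposition \ref{3.3.7} and Corollary \ref{3.3.10}.

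Next, Lemma \ref{gamma} shows $A(D)\to -1+\tfrac12\log(2\pi)$ with an explicit bracketing, which produces the announced constant term $\bigl(-1+\tfrac12\log(2\pi)\bigr)D^n$ together with an error bounded by a constant. Meanwhile Lemma \ref{3.3.12} gives $0\leqslant B(D)\leqslant \tfrac14\log D$, so $nD^{n-1}B(D) = O(D^{n-1}\log D) = o(D^n)$; in fact this contribution is absorbed cleanly into the one-sided explicit bounds. Putting the asymptotic equality together immediately gives the first assertion.

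For the explicit two-sided bounds, I would simply add the one-sided bounds from Proposition \ref{3.3.7}, Corollary \ref{3.3.10}, Lemma \ref{gamma}, and Lemma \ref{3.3.12}, using that $B(D)\geqslant 0$ for the lower bound (so the $-nD^{n-1}B(D)$ term is negative) and the upper bound $B(D)\leqslant \frac14\log D$ for the upper bound. The dominant explicit error $8n(n-1)(D-\frac12)^{n-1}\log(D+\frac12)$ comes from Corollary \ref{3.3.10}, the term $\frac{(n-1)2^n e}{\pi\sqrt n}D^{n-1/2+3\epsilon}\log D$ absorbs the remaining contributions of Proposition \ref{3.3.7} and Corollary \ref{3.3.10} after noting $(1/2+\epsilon)(n+1)<n-1/2+3\epsilon$, and the absolute constant $\frac32\log\frac32+\frac12-\frac12\log(2\pi)$ originates from Lemma \ref{gamma}. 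The main obstacle is not conceptual but a bit of clerical care in verifying that each remainder term in Proposition \ref{3.3.7} and Corollary \ref{3.3.10} is indeed dominated by one of the two announced terms; this requires only the elementary inequality on exponents and the bound on $B(D)$ already established.
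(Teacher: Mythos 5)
Your overall strategy is the same as the paper's: $S = S_1 + S_2$, sum the one-sided bounds of Proposition~\ref{3.3.7} and Corollary~\ref{3.3.10}, and resolve the surviving sum-versus-integral discrepancies $A(D)$ and $B(D)$ with Lemma~\ref{gamma} and Lemma~\ref{3.3.12}. The decomposition into $A(D)$ and $B(D)$ is exactly right.

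However, there is a sign error in the bookkeeping for $B(D)$. After summing, the $B(D)$ contribution enters as $-\,nD^{n-1}B(D)$, with a positive coefficient $nD^{n-1}$ multiplying $-B(D)$. For a \emph{lower} bound on $S(n,D)$ you must therefore take the \emph{upper} bound $B(D)\leqslant\tfrac14\log D$, giving $-nD^{n-1}B(D)\geqslant-\tfrac{n}{4}D^{n-1}\log D$; and for an \emph{upper} bound on $S(n,D)$ you must take the \emph{lower} bound $B(D)\geqslant 0$, giving $-nD^{n-1}B(D)\leqslant 0$. Your proposal applies these in the opposite order: writing ``$B(D)\geqslant 0$ for the lower bound (so the $-nD^{n-1}B(D)$ term is negative)'' and then dropping it is not a valid lower-bound step, since dropping a term that can be negative only weakens an upper bound, not a lower bound. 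As written, your argument would let the remainder slip below the announced lower-bound barrier. You should swap the two uses, and then explicitly track the resulting $-\tfrac{n}{4}D^{n-1}\log D$ contribution in the lower bound; as the proposal stands, that term is silently discarded rather than verified to be dominated by the other remainder terms, which is precisely the ``clerical care'' the final sentence defers but does not supply.

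Aside from this inversion, the route through Proposition~\ref{3.3.7}, Corollary~\ref{3.3.10}, Lemma~\ref{gamma}, Lemma~\ref{3.3.12} is the paper's own route, and the identification of $A(D)$ as the source of the constant $-1+\tfrac12\log(2\pi)$ (via Lemma~\ref{gamma}) and of the dominant explicit errors (from Corollary~\ref{3.3.10} and Proposition~\ref{3.3.7}) is correct in spirit.
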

In order to obtain an explicit estimate of $S(n,D)$, we have the following result:
\begin{prop}\label{bound of S(n,D)}
Let
\[\mathcal H_n=1+\frac{1}{2}+\cdots+\frac{1}{n},\]
and
\[A_1(n,D)=-2^{n+3}\left(D+\frac{1}{2}\right)^{n-1}\log\left(D+\frac{1}{2}\right)-\left(\frac{3}{2}\log\frac{3}{2}+\frac{1}{2}-\frac{1}{2}\log\left(2\pi\right)\right),\]
and
\[A'_1(n,D)=9n(n-1)\left(D+\frac{1}{2}\right)^{n-1}\log\left(D+\frac{1}{2}\right)+\frac{(n-1)2^{n}e}{\pi\sqrt{n}}D^{n-1/2+3\epsilon}\log D.\]
Then we have
\[S(n,D)\geqslant\frac{\mathcal H_{n}D^{n+1}}{n+1}-\frac{D^n\log D}{2}+\left(-1+\frac{1}{2}\log\left(2\pi\right)-\frac{1}{2n}\right)D^n+A_1(n,D)\]
and
\[S(n,D)\leqslant\frac{\mathcal H_{n}D^{n+1}}{n+1}-\frac{D^n\log D}{2}+\left(-1+\frac{1}{2}\log\left(2\pi\right)-\frac{1}{2n}\right)D^n+A'_1(n,D).\]

\end{prop}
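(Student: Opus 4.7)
The starting point is Proposition \ref{integral of S(n,D)}, which reduces the task to producing explicit two-sided bounds for
\[I(n,D) := \int_1^{D+1/2}\bigl((x-1)^n - (D-x)^n\bigr)\log x\, dx.\]
My plan is to compute $I(n,D)$ by a single integration by parts and then evaluate the resulting integrals in closed form, keeping the terms of orders $D^{n+1}$, $D^n\log D$ and $D^n$ explicitly and bounding the remainder uniformly.

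With $u = \log x$ and antiderivative $\Phi(x) = \frac{(x-1)^{n+1} + (D-x)^{n+1}}{n+1}$, and noting that the boundary term at $x=1$ vanishes because $\log 1 = 0$, I obtain
\[(n+1)I(n,D) = \bigl[(D-1/2)^{n+1} + (-1/2)^{n+1}\bigr]\log(D+1/2) - J_1(D) - J_2(D),\]
where $J_1(D)=\int_1^{D+1/2}\frac{(x-1)^{n+1}}{x}dx$ and $J_2(D)=\int_1^{D+1/2}\frac{(D-x)^{n+1}}{x}dx$. For $J_1(D)$ I expand $(x-1)^{n+1}$ by the binomial theorem and integrate term by term. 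For $J_2(D)$ I substitute $u=D-x$ and use the algebraic identity $\frac{u^{n+1}}{D-u} = \frac{D^{n+1}}{D-u} - \sum_{k=0}^n u^k D^{n-k}$ to separate a logarithmic piece $D^{n+1}\log(D+1/2)$ from a polynomial piece $\sum_{k=0}^n D^{n-k}\frac{(D-1)^{k+1} - (-1/2)^{k+1}}{k+1}$.

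Now the leading $D^{n+1}\log(D+1/2)$ from the boundary cancels the one in $J_2(D)$; what remains at top logarithmic order is $-\tfrac{1}{2}(n+1)D^n\log D$, coming from the expansion $(D-1/2)^{n+1} - D^{n+1} = -\tfrac{n+1}{2}D^n + O(D^{n-1})$. The polynomial main term is produced by the identity $\mathcal H_{n+1} - \tfrac{1}{n+1} = \mathcal H_n$: the $\mathcal H_{n+1}D^{n+1}$ contribution from $\sum_{k=0}^n \frac{D^{n+1}}{k+1}$ inside $J_2(D)$ combines with the $-\frac{D^{n+1}}{n+1}$ contribution from the $k=n+1$ term of $J_1(D)$, and after dividing by $n+1$ this yields $\frac{\mathcal H_n}{n+1}D^{n+1}$. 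The $D^n$ coefficient is assembled from three sources: the $\tfrac{1}{2D}$ correction in $\log(D+1/2) = \log D + \tfrac{1}{2D} + O(D^{-2})$ multiplied by $D^{n+1}$; the $k \in \{n,n+1\}$ terms of $J_1(D)$; and the $-(k+1)D^k$ contributions inside each $(D-1)^{k+1}$, together with the $k=0$ correction from $-(-1/2)^{k+1}$, in $J_2(D)$. Collecting and combining with the $(-1+\tfrac{1}{2}\log 2\pi)D^n$ already supplied by Proposition \ref{integral of S(n,D)} gives the stated $D^n$ constant, with any numerical slack absorbed into $A_1(n,D)$ and $A'_1(n,D)$.

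The principal obstacle is purely combinatorial bookkeeping: three independent expansions contribute at the $D^n$ level, and the remainder of $I(n,D)$ beyond $D^n$ must be controlled by an explicit multiple of $(D+1/2)^{n-1}\log(D+1/2)$ uniformly in $D\geq 1$ and in $n\geq 2$. Here I replace $(D-1/2)^{n-1}$ by the larger quantity $(D+1/2)^{n-1}$ in the pre-existing errors from Proposition \ref{integral of S(n,D)}, and use the elementary inequalities $2^{n+3} \geq 8n(n-1)$ and $9n(n-1) \geq 8n(n-1)+ c_n$ (where $c_n$ absorbs the constant-$D^n$ slack, valid for $n \geq 2$) to show that both the inherited remainder and the new one produced by the integration-by-parts computation fit inside the claimed $A_1(n,D)$ and $A'_1(n,D)$. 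Once the identity $\mathcal H_{n+1} - 1/(n+1) = \mathcal H_n$ has been used to identify the main term, the rest is a routine but careful exercise in explicit estimation of elementary functions.
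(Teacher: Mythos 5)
Your proposal is correct and follows essentially the same route as the paper's proof: integration by parts with the antiderivative $\Phi(x)=\frac{(x-1)^{n+1}+(D-x)^{n+1}}{n+1}$, binomial expansion of the remaining rational integral, extraction of the coefficients (with the $D^{n+1}$ coefficient reducing to $\mathcal H_n/(n+1)$), and absorption of both the inherited error from Proposition~\ref{integral of S(n,D)} and the new integral remainder into $A_1(n,D)$ and $A'_1(n,D)$. The only cosmetic difference is that you evaluate the $(D-x)^{n+1}/x$ piece via the substitution $u=D-x$ and a partial-fraction-style identity, whereas the paper expands $(D-x)^{n+1}$ directly in powers of $x$; these are equivalent bookkeeping choices.
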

\begin{proof}
   For estimating the dominant terms, we are going to calculate the coefficients of $D^{n+1}\log D$, $D^{n+1}$, $D^n\log D$ and $D^n$ in the integration in Proposition \ref{integral of S(n,D)}. For the integration in Proposition \ref{integral of S(n,D)}, we have the inequality
\begin{eqnarray*}
  & &\int_1^{D+\frac{1}{2}}\Big((x-1)^n-(D-x)^n\Big)\log x dx+\left(-1+\frac{1}{2}\log\left(2\pi\right) \right)D^n\\
&=&\frac{\left(D-\frac{1}{2}\right)^{n+1}\log\left(D+\frac{1}{2}\right)}{n+1}+\frac{\log\left(D+\frac{1}{2}\right)}{(-2)^{n+1}(n+1)}\\
& &-\int_{1}^{D+\frac{1}{2}}\frac{(x-1)^{n+1}+(D-x)^{n+1}}{(n+1)x}dx+\left(-1+\frac{1}{2}\log\left(2\pi\right) \right) D^n.
\end{eqnarray*}
For the integration $\int_{1}^{D+\frac{1}{2}}\frac{(x-1)^{n+1}+(D-x)^{n+1}}{(n+1)x}dx$, we have
\begin{eqnarray*}
  & &\int_{1}^{D+\frac{1}{2}}\frac{(x-1)^{n+1}+(D-x)^{n+1}}{(n+1)x}dx\\
&=&\frac{1}{n+1}\int_{1}^{D+\frac{1}{2}}\Biggr(\sum_{k=1}^{n+1}{n+1\choose k}x^{k-1}(-1)^{n-k+1}-\sum_{k=1}^{n+1}{n+1\choose k}(-x)^{k-1}D^{n-k+1}\\
& &+\frac{(-1)^{n+1}}{x}+\frac{D^{n+1}}{x}\Biggr)dx\\
&=&\frac{1}{n+1}\Biggr(\sum_{k=1}^{n+1}{n+1\choose k}\frac{\left(D+\frac{1}{2}\right)^{k}-1}{k}(-1)^{n-k+1}+\sum_{k=1}^{n+1}{n+1\choose k}\frac{\left(-D-\frac{1}{2}\right)^{k}+(-1)^k}{k}D^{n-k+1}\\
& &(-1)^{n+1}\log \left(D+\frac{1}{2}\right)+D^{n+1}\log \left(D+\frac{1}{2}\right)\Biggr),
\end{eqnarray*}
then we obtain that the coefficients of $D^{n+1}\log D$ is $0$.

For the coefficient of $D^{n+1}$, it is equal to
\begin{eqnarray*}
  & &-\frac{1}{(n+1)^2}-\frac{1}{n+1}\sum_{k=1}^{n+1}\frac{(-1)^k}{k}{n+1\choose k}\\
&=&-\frac{1}{(n+1)^2}+\frac{1}{n+1}\int_0^1\frac{(1-x)^{n+1}-1}{x}dx\\
&=&-\frac{1}{(n+1)^2}-\frac{1}{n+1}\int_0^1\left((1-x)^n+\cdots+1\right)dx\\
&=&-\frac{1}{(n+1)^2}+\frac{1}{n+1}\left(1+\frac{1}{2}+\cdots+\frac{1}{n+1}\right)\\
&=&\frac{1}{n+1}\left(1+\frac{1}{2}+\cdots+\frac{1}{n}\right).
\end{eqnarray*}
The coefficient of $D^n\log (D+\frac{1}{2})$ is equal to
\[-\frac{1}{2}.\]
The coefficient of $D^n$ is equal to
\begin{eqnarray*}& &-1+\frac{1}{2}\log\left(2\pi\right)-\frac{1}{2(n+1)}\sum_{k=1}^{n+1}{n+1\choose k}(-1)^k-\frac{n+1}{2(n+1)^2}-\frac{1}{2(n+1)}{n+1\choose n}\frac{1}{n}\\
&=&-1+\frac{1}{2}\log\left(2\pi\right)-\frac{1}{2n}.
\end{eqnarray*}

Next, we are going to estimate the remainder. We consider the estimate
\begin{eqnarray*}
  I(n,D)&:= &\int_1^{D+\frac{1}{2}}\Big((x-1)^n-(D-x)^n\Big)\log x dx+\left(-1+\frac{1}{2}\log\left(2\pi\right)\right) D^{n}\\
  & &-\frac{\mathcal H_{n}}{n+1}D^{n+1}+\frac{1}{2}D^n\log D-\left(-1+\frac{1}{2}\log\left(2\pi\right)-\frac{1}{2n}\right)D^n.
\end{eqnarray*}
We can confirm that
\[I(n,D)\leqslant\frac{n}{4}\left(D+\frac{1}{2}\right)^{n-1}\log\left(D+\frac{1}{2}\right)\]
and
\[I(n,D)\geqslant-2^n\left(D+\frac{1}{2}\right)^{n-1}\log\left(D+\frac{1}{2}\right).\]
We combine the above estimate of the integration $I(n,D)$ with the estimate of remainder in Proposition \ref{integral of S(n,D)}, we obtain that $A_1(n,D)$ and $A'_1(n,D)$ satisfy the requirement.
\end{proof}

\subsection{Estimate of $C(1,D)$}
Let
\begin{eqnarray*}
A_2(n,D)&=&\frac{A_1(n,D)}{n!}-\frac{(n+1)D^{n-1}\log D}{4(n-1)!}+\frac{(n+1)\left(-1+\frac{1}{2}\log\left(2\pi\right)-\frac{1}{2n}\right)}{2(n-1)!}D^{n-1}\\
& &+\frac{(n+1)A_1(n-1,D)}{2(n-1)!}-(n-1)^2(D-1)^{n-1}\log D,
\end{eqnarray*}
and
\begin{eqnarray*}
A'_2(n,D)&=&\frac{A'_1(n,D)}{n!}-\frac{(n+1)D^{n-1}\log D}{4(n-1)!}+\frac{(n+1)\left(-1+\frac{1}{2}\log\left(2\pi\right)-\frac{1}{2n}\right)}{2(n-1)!}D^{n-1}\\
& &+\frac{(n+1)A'_1(n-1,D)}{2(n-1)!}+(n-1)^2(D-1)^{n-1}\log D,
\end{eqnarray*}
where the constants $A_1(n,D)$ and $A'_1(n,D)$ are defined in Proposition \ref{bound of S(n,D)}. Then we have $A_2(n,D)\sim o(D^n)$ and $A'_2(n,D)\sim o(D^n)$. By Proposition \ref{upper bound and lower bound of Q(n,D)} and Proposition \ref{bound of S(n,D)}, for $n\geqslant2$, we obtain
\begin{eqnarray}\label{Q(n,D)lower bound}
Q(n,D)&\geqslant&\frac{\mathcal H_n D^{n+1}}{(n+1)!}-\frac{1}{2n!}D^n\log D\\
& &+\frac{1}{n!}\left(-1+\frac{1}{2}\log\left(2\pi\right)-\frac{1}{2n}+\frac{(n+1)\mathcal H_{n-1}}{2}\right)D^n+A_2(n,D),\nonumber
\end{eqnarray}
and
\begin{eqnarray}\label{Q(n,D)upper bound}
Q(n,D)&\leqslant&\frac{\mathcal H_n D^{n+1}}{(n+1)!}-\frac{1}{2n!}D^n\log D\\
& &+\frac{1}{n!}\left(-1+\frac{1}{2}\log\left(2\pi\right)-\frac{1}{2n}+\frac{(n+1)\mathcal H_{n-1}}{2}\right)D^n+A'_2(n,D).\nonumber
\end{eqnarray}

By the definition of $C(n,D)$ in \eqref{constant C-2}, we have $C(0,D)\equiv0$ for all $D\geqslant0$, and $C(n,0)\equiv C(n,1)\equiv0$ for any $n\geqslant0$. By the relation
\[C(n,D)=\sum_{m=0}^DC(n-1,m)-Q(n,D)\]
showed in \eqref{C(n,D)->Q(n,D)}, we need to calculate $C(1,D)$ for $D\geqslant2$ in order to estimate $C(n,D)$. By definition, we have
\[C(1,D)=-\log\prod_{m=0}^D{D\choose m}=-Q(1,D).\]
We are going to calculate $Q(1,D)$ for all $D\geqslant2$ directly. First, we have
\[Q(1,D)=\sum_{m=2}^D(m-D+m-1)\log m=2\sum_{m=2}^Dm\log m-\left(D+1\right)\sum_{m=2}^D\log m.\]
\begin{prop}\label{C(1,D)}
Let
\begin{eqnarray*}
  A_3(D)&=&a_3(D)+\frac{1}{8}\left(\log\frac{3}{2}+1\right)-\frac{1}{8}\left(\log\left([D^{1/2}]+\frac{1}{2}\right)+1\right)\\
  & &+\frac{1}{8}\left(\log\left([\sqrt{D}]+\frac{1}{2}\right)+1\right)-\frac{1}{8}\left(\log\left(D+\frac{1}{2}\right)+1\right),
\end{eqnarray*}
and
\begin{eqnarray*}
  A'_3(D)&=&a_3(D)+\frac{1}{8}\left(\log\frac{3}{2}+1\right)-\frac{1}{8}\left(\log\left([D^{1/2}]+\frac{1}{2}\right)+1\right)+\frac{2\sqrt{D}}{3}\\
  & &+\frac{1}{8}\left(\log\left([\sqrt{D}]+\frac{1}{2}\right)+1\right)-\frac{1}{8}\left(\log\left(D+\frac{1}{2}\right)+1\right)+\sqrt{D}\\
  & &+\frac{1}{4}+\frac{\pi^2}{6},
\end{eqnarray*}
where $a_3(D)\sim o(D)$ is given explicitly in the proof below. Then we have
\[Q(1,D)=2\int_{\frac{3}{2}}^{D+\frac{1}{2}}x\log xdx-(D+1)\int_{\frac{3}{2}}^{D+\frac{1}{2}}\log xdx-(D+1)\left(-1+\frac{1}{2}\log\left(2\pi\right)\right)+o(D).\]
In addition, we have
\[C(1,D)\geqslant-\frac{1}{2}D^2+\frac{1}{2}D\log D+\left(-1+\frac{1}{2}\log\left(2\pi\right) \right)D+A_3(D)\]
and
\[C(1,D)\leqslant-\frac{1}{2}D^2+\frac{1}{2}D\log D+\left(-1+\frac{1}{2}\log\left(2\pi\right) \right) D+A'_3(D).\]
\end{prop}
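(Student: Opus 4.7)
The plan is to compute $C(1,D)=-Q(1,D)$ directly from the explicit formula
\[Q(1,D)=2\sum_{m=2}^D m\log m-(D+1)\sum_{m=2}^D \log m,\]
and to estimate each of the two sums with the Euler--Maclaurin style bound of Lemma \ref{integral approximation}, handling the Stirling correction via Lemma \ref{gamma}.

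First I would treat $\sum_{m=2}^D\log m=\log(D!)$. Applying the same splitting argument as in Lemma \ref{gamma} but with the cut-off $[\sqrt{D}]$ (which is the natural exponent for this one-dimensional estimate, in place of $D^{1/2+\epsilon}$), I would obtain
\[\sum_{m=2}^D\log m=\int_{3/2}^{D+\frac{1}{2}}\log x\,dx+\Big(-1+\tfrac{1}{2}\log(2\pi)\Big)+o(1),\]
with two-sided explicit control on the remainder. This produces the $(D+1)\big(-1+\tfrac{1}{2}\log(2\pi)\big)$ contribution in the statement after multiplying by $-(D+1)$.

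Next I would estimate $\sum_{m=2}^D m\log m$ with $f(x)=x\log x$, for which $f'(x)=\log x+1$ and $f''(x)=1/x$. I would split at $[\sqrt{D}]$ and apply Lemma \ref{integral approximation} on each of the two ranges separately. On the upper range $[\sqrt{D},D]$, the cumulative second-derivative error is bounded by $(D-\sqrt{D})\cdot 1/\sqrt{D}\leqslant\sqrt{D}$, while the boundary correction $\tfrac{1}{8}(f'(p-\tfrac12)-f'(q+\tfrac12))$ contributes precisely the logarithmic terms $\tfrac18\log([\sqrt D]+\tfrac12)$ and $\tfrac18\log(D+\tfrac12)$ appearing in $A_3(D)$ and $A'_3(D)$. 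On the lower range $[2,\sqrt{D}]$, the individual error bounds sum to at most $\sum_{m=2}^{[\sqrt{D}]}1/m^2<\pi^2/6$, giving the $\pi^2/6$ contribution in $A'_3(D)$, while the matching boundary correction at $x=\tfrac32$ gives the $\tfrac18(\log\tfrac32+1)$ term.

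Assembling the pieces, I would combine into
\[Q(1,D)=2\int_{3/2}^{D+\frac{1}{2}}x\log x\,dx-(D+1)\int_{3/2}^{D+\frac{1}{2}}\log x\,dx-(D+1)\Big(-1+\tfrac{1}{2}\log(2\pi)\Big)+o(D),\]
and then expand the two integrals in closed form. The polynomial expansion of $\int x\log x\,dx=\tfrac{x^2}{2}\log x-\tfrac{x^2}{4}$ and $\int\log x\,dx=x\log x-x$ at $x=D+\tfrac12$, after the multiplication by $2$ respectively $(D+1)$ and the subtraction, yields the dominant terms $-\tfrac{1}{2}D^2+\tfrac{1}{2}D\log D+(-1+\tfrac{1}{2}\log(2\pi))D$ stated in the proposition; all subdominant contributions get absorbed into $a_3(D)$. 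Negating the result produces the claimed bounds on $C(1,D)$.

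The main obstacle is bookkeeping: one must track the explicit boundary contributions of Lemma \ref{integral approximation} from both endpoints and both subintervals, verify that the cancellations at the splitting point $[\sqrt D]+\tfrac12$ are absorbed into $a_3(D)$, and confirm that the differences between one-sided and two-sided bounds in Lemma \ref{gamma} and in the $f''$-error estimate combine into exactly $A_3(D)$ and $A'_3(D)$ as written. There is no new analytic ingredient beyond those already proved in Lemmas \ref{integral approximation} and \ref{gamma}; only careful arithmetic.
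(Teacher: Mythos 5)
Your overall route matches the paper's: express $Q(1,D)=2\sum_{m=2}^D m\log m-(D+1)\sum_{m=2}^D\log m$, split the first sum at $[\sqrt D]$, apply Lemma~\ref{integral approximation} with $f(x)=x\log x$ on each piece, and handle $\sum\log m$ via Lemma~\ref{gamma}; expanding the resulting integrals at $x=D+\tfrac12$ then yields the leading terms. There is, however, a concrete error in your lower-range estimate. You claim the cumulative per-term errors on $[2,[\sqrt D]]$ are bounded by $\sum_{m=2}^{[\sqrt D]}1/m^2<\pi^2/6$. But for $f(x)=x\log x$ you correctly compute $f''(x)=1/x$, so the per-$m$ error in Lemma~\ref{integral approximation} is controlled by $\sup_{m-1/2\leqslant x\leqslant m+1/2}|f''(x)|=\tfrac{1}{m-1/2}$, not $1/m^2$; the quadratic decay you are invoking is appropriate to $f(x)=\log x$ (cf.\ Lemma~\ref{3.3.12}), not to $x\log x$. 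The lower-range error is therefore of size $\sum_{m=2}^{[\sqrt D]}\tfrac{1}{m-1/2}=O(\log D)$, or, using the coarser bound $(q-p+1)\sup|f''|$ as the paper does, at most $\tfrac{2\sqrt D}{3}$ --- and this is precisely where the $\tfrac{2\sqrt D}{3}$ term in $A'_3(D)$ comes from, a term your bookkeeping never produces, while the $\pi^2/6$ appearing in $A'_3(D)$ is a separate contribution that you have misattributed. The discrepancy does not affect the $o(D)$ statement, since $\sqrt D=o(D)$, but it does mean your argument would not reproduce the explicit constants $A_3(D)$ and $A'_3(D)$ asserted in the proposition.
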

\begin{proof}
   For the sum $\sum\limits_{m=2}^Dm \log m$, we device it into two parts: the sum $\sum\limits_{m=2}^{[\sqrt{D}]}m \log m$ and the sum $\sum\limits_{m=[\sqrt{D}]+1}^Dm \log m$, where $[x]$ is the largest integer which is smaller than $x$.

For estimating $\sum\limits_{m=2}^{[\sqrt{D}]}m \log m$, by Lemma \ref{integral approximation}, we have
\[\sum_{m=2}^{[\sqrt{D}]}m \log m=\int_{\frac{3}{2}}^{[D^{1/2}]+1/2}x\log xdx+\frac{1}{8}\left(\log\frac{3}{2}+1\right)-\frac{1}{8}\left(\log\left([D^{1/2}]+\frac{1}{2}\right)+1\right)+\Theta_1,\]
where
\[0\leqslant|\Theta_1|\leqslant \sup\limits_{2/3\leqslant m\leqslant \sqrt{D}-1/2}\frac{\sqrt{D}-1}{m}\leqslant\frac{2\sqrt{D}}{3}.\]
 In addition, we have
 \[\int_{\frac{3}{2}}^{[D^{1/2}]+1/2}x\log xdx\sim \frac{1}{4}D\log D+\frac{1}{4}D+o(D).\]

For the sum $\sum\limits_{m=[\sqrt{D}]+1}^Dm \log m$, also by Lemma \ref{integral approximation}, we have
\begin{eqnarray*}
  & &\sum_{m=[\sqrt{D}]+1}^Dm \log m\\
  &=&\int_{[D^{1/2}]+1/2}^{D+1/2}x\log xdx+\frac{1}{8}\left(\log\left([\sqrt{D}]+\frac{1}{2}\right)+1\right)-\frac{1}{8}\left(\log\left(D+\frac{1}{2}\right)+1\right)+\Theta_2,
\end{eqnarray*}
where
\[0\leqslant|\Theta_2|\leqslant \sup\limits_{\sqrt{D}-1/2\leqslant m\leqslant D+1/2}\frac{D-\sqrt{D}+1}{m}\leqslant\sqrt{D}.\]

The estimate $\sum\limits_{m=2}^D\log m$ is by Lemma \ref{gamma}.

For an explicit calculation, we have
\begin{eqnarray*}
  & &2\int_{\frac{3}{2}}^{D+\frac{1}{2}}x\log xdx-(D+1)\int_{1}^{D+\frac{1}{2}}\log xdx-(D+1)\left(-1+\frac{1}{2}\log\left(2\pi\right)\right)\\
&=&\left(D+\frac{1}{2}\right)^2\log\left(D+\frac{1}{2}\right)-\frac{9}{4}\log\frac{3}{2}-\frac{1}{2}\left(D+\frac{1}{2}\right)^2+\frac{1}{2}\left(\frac{3}{2}\right)^2\\
& &-(D+1)\left(\left(D+\frac{1}{2}\right)\log\left(D+\frac{1}{2}\right)-\left(D+\frac{1}{2}\right)+1\right)-(D+1)\left(-1+\frac{1}{2}\log\left(2\pi\right)\right)\\
&=&\frac{1}{2}D^2-\frac{1}{2}D\log D-\left(-1+\frac{1}{2}\log\left(2\pi\right) \right) D+a_3(D),
\end{eqnarray*}
where $a_3(D)$ is the remainder of the above sum. By Lemma \ref{gamma}, we obtain that the constants $A_3(D)$ and $A'_3(D)$ in the assertion satisfy the requirement, for $C(1,D)=-Q(1,D)$.
\end{proof}
\subsection{Estimate of $C(n,D)$}
In this part, we will estimate the constant $C(n,D)$ as in \eqref{estimation of C-2}. By the equality \eqref{C(n,D)->Q(n,D)}, we can estimate the constant $C(n,D)$ by the equalities \eqref{Q(n,D)lower bound}, \eqref{Q(n,D)upper bound} and Proposition \ref{C(1,D)}.
\begin{theo}\label{estimate of C(n,D)}
Let the constant $C(n,D)$ be as in \eqref{constant C-2}. Then we have
\begin{eqnarray*}
C(n,D)&\geqslant&\frac{1-\mathcal H_{n+1}}{n!}D^{n+1}-\frac{n-2}{2n!}D^n\log D\\
& &+\frac{1}{n!}\Biggr(\left(-\frac{1}{6}n^3-\frac{3}{4}n^2-\frac{13}{12}n+2\right)\mathcal H_n\\
& &\:+\frac{1}{4}n^3+\frac{17}{24}n^2+\left(\frac{119}{72}-\frac{1}{2}\log\left(2\pi\right)\right)n-4+\log\left(2\pi\right)\Biggr)D^n\\
& &+A_4(n,D),
\end{eqnarray*}
and
\begin{eqnarray*}
C(n,D)&\leqslant&\frac{1-\mathcal H_{n+1}}{n!}D^{n+1}-\frac{n-2}{2n!}D^n\log D\\
& &+\frac{1}{n!}\Biggr(\left(-\frac{1}{6}n^3-\frac{3}{4}n^2-\frac{13}{12}n+2\right)\mathcal H_n\\
& &\:+\frac{1}{4}n^3+\frac{17}{24}n^2+\left(\frac{119}{72}-\frac{1}{2}\log\left(2\pi\right)\right)n-4+\log\left(2\pi\right)\Biggr)D^n\\
& &+A'_4(n,D),
\end{eqnarray*}
where $n\geqslant1$, $A_4(n,D)\sim o(D^n)$, $A'_4(n,D)\sim o(D^n)$. In addition, we can calculate $A_4(n,D)$ et $A'_4(n,D)$ explicitly.
\end{theo}
\begin{proof}
First, we consider the remainders of $A_4(n,D)$ and $A'_4(n,D)$. we define $A_4(1,D)=-A_3(D)$, and
\[A_4(n,D)=\sum_{m=1}^DA_4(n-1,m)-A'_2(n,D).\]
Similarly, we define $A'_4(1,D)=-A_3(D)$, and
\[A'_4(n,D)=\sum_{m=1}^DA'_4(n-1,m)-A_2(n,D).\]
We can confirm that we have $A_4(n,D)\sim o(D^n)$ and $A'_4(n,D)\sim o(D^n)$, and they can be calculated explicitly.

Next, we are going to calculate the coefficients of $D^{n+1}$, $D^n\log D$ and $D^n$ in the estimate of $C(n,D)$. Let $a_n,b_n,c_n$ be the coefficients of $D^{n+1}$, $D^n\log D$ and $D^n$ in $C(n,D)$ respectively. By Proposition \ref{C(1,D)}, we have $a_1=-\frac{1}{2}$, $b_1=\frac{1}{2}$, $c_1=-1+\frac{1}{2}\log\left(2\pi\right)$; and by the equality \eqref{C(n,D)->Q(n,D)}, we have
\[a_n=\frac{a_{n-1}}{n+1}-\frac{\mathcal H_n}{(n+1)!}\]
and
\[b_n=\frac{b_{n-1}}{n}-\frac{1}{2n!}.\]
We consider the coefficients in the asymptotic estimate of $D^n$ in the sum $\sum\limits_{m=0}^Dm^n$ and $\sum\limits_{m=1}^Dm^{n-1}\log m$. We obtain the the asymptotic coefficient of $D^n$ in $\sum\limits_{m=1}^Dm^n$ is $\frac{n+1}{2}$, and the asymptotic coefficient of $D^n$ in $\sum\limits_{m=1}^Dm^{n-1}\log m$ is $\frac{1}{n^2}$. And the terms $A_4(n,D)$ and $A'_4(n,D)$ have no contribution to the coefficient of the term $D^n$. So we obtain
\[c_n=\frac{c_{n-1}}{n}+\frac{b_{n-1}}{n^2}+\frac{n+1}{2}a_{n-1}-\frac{1}{n!}\left(-1+\frac{1}{2}\log\left(2\pi\right)-\frac{1}{2n}+\frac{(n+1)\mathcal H_{n-1}}{2}\right).\]

For the term $a_n$, we have
\begin{equation*}
   (n+1)!a_n=n!a_{n-1}-\mathcal H_n=a_1-\sum_{k=2}^n\mathcal H_k=(n+1)(1-\mathcal H_{n+1}),
\end{equation*}
and we obtain
\[a_n=\frac{1-\mathcal H_{n+1}}{n!}.\]

For the term $b_n$, we have
\begin{equation*}
  n!b_n=(n-1)!b_{n-1}-\frac{1}{2}=b_1-\frac{n-1}{2}=-\frac{n-2}{2},
\end{equation*}
and we obtain
\[b_n=-\frac{n-2}{2n!}.\]

For the term $c_n$, by the above results of $a_n$ and $b_n$, we have
\begin{eqnarray*}
  n!c_n&=&(n-1)!c_{n-1}-\frac{n-3}{2n}+\frac{n(n+1)(1-\mathcal H_{n})}{2}\\
  & &-\left(-1+\frac{1}{2}\log\left(2\pi\right)-\frac{1}{2n}+\frac{(n+1)\mathcal H_{n-1}}{2}\right)\\
  &=&(n-1)!c_{n-1}+1-\frac{1}{2}\log\left(2\pi\right)+\frac{5}{2n}+\frac{(n+1)^2}{2}-\frac{(n+1)^2}{2}\mathcal H_{n+1}+\frac{1}{2}\\
&=&c_1-(n-1)\left(-1+\frac{1}{2}\log\left(2\pi\right)\right)+\frac{5\mathcal H_n}{2}-\frac{5}{2}+\frac{1}{12}(n+1)(n+2)(2n+3)\\
& &-\frac{5}{2}-\sum_{k=3}^{n+1}\frac{k^2}{2}\mathcal H_k+\frac{n-1}{2}\\
&=&-(n-2)\left(-1+\frac{1}{2}\log\left(2\pi\right)\right)+\frac{5\mathcal H_n}{2}+\frac{1}{6}n^3+\frac{3}{4}n^2+\frac{7}{6}n-5-\sum_{k=3}^{n+1}\frac{k^2}{2}\mathcal H_k,
\end{eqnarray*}
By the Abel transformation, we have
\begin{eqnarray*}
  \sum_{k=3}^{n+1}k^2\mathcal H_k&=&\sum_{k=1}^{n+1}k^2\mathcal H_k-7\\
  &=&\mathcal H_{n+2}\sum_{k=1}^{n+1}k^2-\sum_{k=1}^{n+1}\frac{1}{k+1}\sum_{j=1}^{k}j^2-7\\
  &=&\frac{1}{6}\mathcal H_{n+2}(n+1)(n+2)(2n+3)-\frac{1}{6}\sum_{k=1}^{n+1}k(2k+1)-7 \\
  &=&\left(\frac{1}{3}n^3+\frac{3}{2}n^2+\frac{13}{6}n+1\right)\mathcal H_{n}-\frac{1}{9}n^3+\frac{1}{12}n^2+\frac{37}{36}n-6.
\end{eqnarray*}

So we obtain
\begin{eqnarray*}
  c_n&=&\frac{1}{n!}\Biggr(\left(-\frac{1}{6}n^3-\frac{3}{4}n^2-\frac{13}{12}n+2\right)\mathcal H_n\\
& &\:+\frac{1}{4}n^3+\frac{17}{24}n^2+\left(\frac{119}{72}-\frac{1}{2}\log\left(2\pi\right)\right)n-4+\log\left(2\pi\right)\Biggr).
\end{eqnarray*}
Then we have the result.
\end{proof}
\backmatter

\bibliography{liu}
\bibliographystyle{smfplain}

\end{document}